\newtheorem{thm}{Theorem}[section]
\newtheorem{cor}[thm]{Corollary}
\newtheorem{lem}[thm]{Lemma}
\newtheorem{conj}{Conjecture}
\newtheorem{prop}[thm]{Proposition}
\newtheorem{claim}[thm]{Claim}
\newtheorem{quest}{Question}
\theoremstyle{definition}
\newtheorem{defn}{Definition}
\newcommand{\C}{\mathcal{C}}
\newcommand{\F}{\mathcal{F}}
\newcommand{\E}{\mathbb{E}}
\renewcommand{\t}{\text}
\renewcommand{\l}{\left}
\renewcommand{\r}{\right}
\newcommand{\ex}{\mathrm{ex}}
\newcommand{\Hh}{\hat{H}}
\renewcommand{\c}[1]{\mathcal{#1}}
\newcommand{\sub}{\subseteq}
\newcommand{\Om}{\Omega}
\newcommand{\f}[2]{\frac{#1}{#2}}
\newcommand{\rec}[1]{\frac{1}{#1}}
\newcommand{\al}{\alpha}
\newcommand{\ep}{\epsilon}
\newcommand{\sig}{\sigma}
\newcommand{\gam}{\gamma}
\newcommand{\Del}{\Delta}
\newcommand{\del}{\delta}
\newcommand{\half}{\frac{1}{2}}
\title{Random Tur\'an Problems for Hypergraph Expansions}
\author{Jiaxi Nie\footnote{School of Mathematics, Georgia Institute of Technology {\tt jnie47@gatech.edu}.}\and Sam Spiro\footnote{Dept.\ of Mathematics, Rutgers University {\tt sas703@scarletmail.rutgers.edu}. This material is based upon work supported by the National Science Foundation Mathematical Sciences Postdoctoral Research Fellowship under Grant No. DMS-2202730.}}
\date{\today}
\begin{document}
	
	\maketitle
\begin{abstract}
    Given an $r_0$-uniform hypergraph $F$, we define its $r$-uniform expansion $F^{(r)}$ to be the hypergraph obtained from $F$ by inserting $r-r_0$ distinct vertices into each edge of $F$, and we define $\mathrm{ex}(G_{n,p}^r,F^{(r)})$ to be the largest $F^{(r)}$-free subgraph of the random hypergraph $G_{n,p}^r$.  
    
    We initiate the first systematic study of $\mathrm{ex}(G_{n,p}^r,F^{(r)})$ for general hypergraphs $F$.  Our main result essentially resolves this problem for large $r$ by showing that $\mathrm{ex}(G_{n,p}^r,F^{(r)})$ goes through three predictable phases whenever $F$ is Sidorenko and $r$ is sufficiently large, with the behavior of $\mathrm{ex}(G_{n,p}^r,F^{(r)})$ being provably more complex whenever $F$ has no Sidorenko expansion.  Moreover, our methods unify and generalize almost all previously known results for the random Tur\'an problem for degenerate hypergraphs of uniformity at least 3.    
\end{abstract}
\section{Introduction}
This paper involves the intersection of three different streams of research in extremal combinatorics: random Tur\'an problems, Sidorenko hypergraphs, and hypergraph expansions.  Throughout this paper we will work with $r$-uniform hypergraphs, or $r$-graphs for short.  Given an $r$-graph $F$, we let $v(F)$ denote the number of vertices of $F$ and $|F|$ the number of edges of $F$; and if $|F|\ge 2$, then we define its \textit{$r$-density} $d_r(F)$ by
\[d_r(F):=\max_{F'\sub F,\ |F'|\ge 2}\frac{|F'|-1}{v(F')-r}.\]

\subsection{Random Tur\'an Problems}

The \textit{Tur\'an number} $\ex(n,F)$ of an $r$-graph $F$ is defined to be the maximum number of edges that an $n$-vertex $F$-free $r$-graph can have.  Let $G_{n,p}^r$ denote the random $r$-graph on $n$ vertices obtained by including each possible edge independently and with probability $p$, and when $r=2$ we simply write $G_{n,p}$ instead of $G_{n,p}^2$.  We define the \textit{random Tur\'an number} $\ex(G_{n,p}^r,F)$ to be the maximum number of edges in an $F$-free subgraph of $G_{n,p}^r$.  Note that when $p=1$ we have $\ex(G_{n,1}^r,F)=\ex(n,F)$, so the random Tur\'an number can be viewed as a probabilistic analog of the classical Tur\'an number.

The asymptotics for $\ex(G_{n,p}^r,F)$ are essentially known if $F$ is not an $r$-partite $r$-graph due to independent breakthrough work of Conlon and Gowers \cite{conlon2016combinatorial} and of Schacht \cite{schacht2016extremal}, and because of this, we focus only on the degenerate case when $F$ is $r$-partite.  This degenerate case seems to be very difficult, even in the graph setting $r=2$ where tight bounds are known only for even cycles \cite{morris2016number,jiang2022balanced}, complete bipartite graphs~\cite{morris2016number}, and theta graphs~\cite{mckinley2023random}.  All of the tight bounds from these examples agree with the following conjecture of McKinley and Spiro~\cite{mckinley2023random}, which gives a general prediction for how the function $\ex(G_{n,p},F)$ should behave for all bipartite $F$.  To state this conjecture, we say that a sequence of events $A_n$ holds \textit{asymptotically almost surely} or \textit{a.a.s.}\ for short if $\Pr[A_n]\to 1$ as $n\to \infty$, and we write $f(n)\ll g(n)$ if $f(n)/g(n)\to 0$ as $n\to \infty$.

\begin{conj}[\cite{mckinley2023random}]\label{conj:MS}
    If $F$ is a graph with $\ex(n,F)=\Theta(n^\al)$ for some $\al\in (1,2]$, then a.a.s.\
    \[\ex(G_{n,p},F)=  \begin{cases}\max\{\Theta(p^{\al-1}n^\al), n^{2-1/d_2(F)}(\log n)^{\Theta(1)}  \} & p\gg n^{-1/d_2(F)},\\ (1+o(1))p {n\choose 2} & n^{-2}\ll p\ll n^{-1/d_2(F)}.\end{cases}\]
\end{conj}
In particular, this conjecture predicts that $\ex(G_{n,p},F)$ should always have three ranges of behavior (i.e.\ it should be roughly equal to either $p^{\al-1}n^\al,\ n^{2-1/d_2(F)}$, or $p{n\choose 2}$ depending on the value of $p$), and moreover, it predicts that one of these ranges will be a ``flat middle range,'' i.e.\  a range where $\ex(G_{n,p},F)=n^{2-1/d_2(F)}(\log n)^{\Theta(1)}$ is essentially independent of $p$ for a sizable range of $p$ close to $n^{-1/d_2(F)}$. 

Very few results are known regarding \Cref{conj:MS} for general graphs.  In particular, the only upper bounds known for $\ex(G_{n,p},F)$ for \textit{arbitrary} graphs $F$ are due to Jiang and Longbrake~\cite{jiang2022balanced}, with these general bounds matching \Cref{conj:MS} when $F$ is an even cycle.  However, it can be shown that in all cases other than $F=C_{2\ell}$, the bounds of \cite{jiang2022balanced} are  weaker than what is predicted by \Cref{conj:MS}, so more work is needed to prove the bounds of \Cref{conj:MS} in every other case.

Turning now from graphs to hypergraphs, we note that not much is known about the random Tur\'an problem for $r$-graphs with $r\ge 3$.  Indeed, the only such $F$ for which effective (though typically non-tight) bounds are known for are complete $r$-partite $r$-graphs~\cite{spiro2022counting}, loose even cycles~\cite{mubayi2023random,nie2024turan}, Berge cycles~\cite{nie2024turan,spiro2022counting}, expansions of hypergraph cliques and subgraphs of tight trees \cite{nie2021triangle,nie2023random}, and non-Sidorenko hypergraphs \cite{nie2023sidorenko}.

\subsection{Sidorenko Hypergraphs}
Part of the difficulty in proving results about random Tur\'an numbers for hypergraphs  is that no analog of \Cref{conj:MS} is known to hold for $r$-graphs,   making it difficult to predict a priori what the expected behavior of $\ex(G_{n,p}^r,F)$ should be for a given $r$-graph $F$. 
 Moreover, this lack of a general conjecture for the random Tur\'an problem of hypergraphs turns out to be closely related to the lack of an analog of Sidorenko's conjecture for hypergraphs.  To discuss this further, we need to formally define what it means for a hypergraph to be Sidorenko.

\begin{defn}
    A \textit{homomorphism} from an $r$-graph $F$ to an $r$-graph $H$ is a map $\phi:V(F)\to V(H)$ such that $\phi(e)$ is an edge of $H$ whenever $e$ is an edge of $F$.  We let $\hom(F,H)$ denote the number of homomorphisms from $F$ to $H$ and define the \textit{homomorphism density}
\[t_F(H)=\frac{\hom(F,H)}{v(H)^{v(F)}}.\]
    We say that an $r$-graph $F$ is \textit{Sidorenko} if for every $r$-graph $H$ we have
    \[t_F(H)\ge t_{K_r^r}(H)^{|F|},\]
    where $K_r^r$ is the $r$-graph consisting of a single edge.
\end{defn}

The study of Sidorenko hypergraphs is an extremely active area of research within extremal combinatorics, especially in the context of graphs~\cite{conlon2010approximate,conlon2017finite,conlon2018sidorenko,conlon2018some,coregliano2021biregularity,fox2017local,hatami2010graph,kim2016two,li2011logarithimic,lovasz2011subgraph,szegedy2014information}, with the most outstanding problem in this direction being the following.

\begin{conj}[Sidorenko's conjecture \cite{Sidorenko1991Inequalities,Sidorenko1993Acorrelation}]
    Every bipartite graph is Sidorenko.
\end{conj}

It is well known that Sidorenko's conjecture does not extend to hypergraphs, in the sense that there exist $r$-partite $r$-graphs $F$ which are not Sidorenko for all $r\ge 3$.  For such hypergraphs, it was shown by Conlon, Lee, and Sidorenko~\cite{conlon2023extremal} that $\ex(n,F)$ is always strictly larger than the trivial bound given by a simple deletion argument.  This result was later extend to the random Tur\'an setting in \cite{nie2023sidorenko}, where in particular it was shown that essentially any counterexample to Sidorenko's conjecture would provide a counterexample to \Cref{conj:MS}, and more generally that any non-Sidorenko $r$-partite $r$-graph would provide a counterexample to the hypergraph analog of \Cref{conj:MS}.

\subsection{Hypergraph Expansions}
The discussion above illustrates that getting a complete understanding of random Tur\'an numbers of arbitrary hypergraphs is complicated by the existence of non-Sidorenko hypergraphs.  To partially overcome this, we will restrict ourselves in this paper to the following class of hypergraphs which is known to behave well with respect to both random Tur\'an numbers and being Sidorenko.

\begin{defn}
    Given an $r_0$-graph $F$, we define its \textit{$r$-expansion} $F^{(r)}$ to be the $r$-graph obtained by inserting $r-r_0$ distinct new vertices into each edge of $F$.  When $F=C_\ell$ is a graph cycle we will sometimes refer to $F^{(r)}=C_{\ell}^{(r)}$ as a \textit{loose cycle}.
\end{defn}
Hypergraph expansions are a natural and well-studied class of hypergraphs, especially in the context of (classical) Tur\'an numbers.  For example, the famed Erd\H{o}s matching conjecture~\cite{erdos1965problem} is equivalent to determining $\ex(n,M_k^{(r)})$ when $M_k$ is a graph matching of size $k$.  There are many more results for Tur\'an numbers of expansions \cite{chung1983unavoidable,kostochka2015turan,mubayi2007intersection,pikhurko2013exact}, and we refer the interested reader to the survey by Mubayi and Verstra\"ete~\cite{mubayi2016survey} for much more on (classical) Tur\'an problems for expansions.

One nice features about hypergraph expansions is that their Tur\'an numbers are easy to compute for large $r$.  In particular, it is straightforward to show for any $r_0$-graph $F$ which does not contain a vertex contained in every edge of $F$ that $\ex(n,F^{(r)})=\Theta_r(n^{r-1})$ for all $r$ sufficiently large in terms of $F$.  In view of this, it is natural to ask if one can also easily establish bounds for random Tur\'an numbers of expansions, at least in the case when $r$ is large and $p$ is relatively close to 1.  And indeed, there have been a number of works studying special cases of this problem \cite{mubayi2023random,nie2021triangle,nie2023random,nie2024turan}, with perhaps the most well studied case being that of loose even cycles $C_{2\ell}^{(r)}$. 

The particular problem of studying loose even cycles was initially studied by Mubayi and Yepremyan~\cite{mubayi2023random}, with later independent work also being done by Nie~\cite{nie2024turan}. 
 In total, these two streams of research culminated in the following set of tight bounds for this problem whenever the uniformity is at least $4$.

\begin{thm}[\cite{nie2024turan,mubayi2023random}]\label{thm:looseCycles}
    For every $r\ge 4$ and $\ell\ge2$, we have a.a.s.
    $$
    \ex(G^r_{n,p}, C^{(r)}_{2\ell})=
    \l\{
    \begin{aligned}
        & \max\{\Theta(pn^{r-1}),n^{1+\frac{1}{2\ell-1}}(\log n)^{\Theta(1)}\},~~&\t{if}~p\gg n^{-r+1+\frac{1}{2\ell-1}}\\
        &(1+o(1))p\binom{n}{r},~~&\t{if}~ n^{-r}\ll p\ll  n^{-r+1+\frac{1}{2\ell-1}}.\\
    \end{aligned}
    \r.
    $$
\end{thm}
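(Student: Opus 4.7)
The plan is to prove the lower and upper bounds separately: the lower bounds follow from explicit constructions, and the upper bound proceeds by induction on $r$ via the shadow technique of Mubayi--Yepremyan and Nie.

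For the \textbf{lower bound}, consider first the sparse regime $n^{-r}\ll p\ll n^{-r+1+1/(2\ell-1)}$. The expected number of copies of $C^{(r)}_{2\ell}$ in $G^r_{n,p}$ is of order $p^{2\ell} n^{2\ell(r-1)}$, which a short calculation shows equals $o(p\binom{n}{r})$ precisely when $p\ll n^{-r+1+1/(2\ell-1)}$; deleting one edge per copy therefore leaves a $C^{(r)}_{2\ell}$-free subgraph with $(1+o(1))p\binom{n}{r}$ edges a.a.s. In the denser regime $p\gg n^{-r+1+1/(2\ell-1)}$, the bound $\Theta(pn^{r-1})$ is realized by the star $S_v:=\{e\in E(G^r_{n,p}):v\in e\}$ at any vertex $v$: a Chernoff estimate yields $|S_v|=(1+o(1))p\binom{n-1}{r-1}$ a.a.s., and $S_v$ is $C^{(r)}_{2\ell}$-free because every vertex of a loose $2\ell$-cycle lies in at most two of its $2\ell\ge 4$ edges. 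The other lower bound $n^{1+1/(2\ell-1)}(\log n)^{\Theta(1)}$ follows from a standard dense $C^{(r)}_{2\ell}$-free construction combined with a random sampling or partial embedding argument into $G^r_{n,p}$.

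For the \textbf{upper bound}, I would induct on $r\ge 3$. The base case $r=3$ is the Mubayi--Yepremyan $2$-shadow argument, which reduces the problem to the Morris--Saxton bound $\ex(G_{n,p},C_{2\ell})=O(n^{1+1/(2\ell-1)}(\log n)^{\Theta(1)})$ for graph even cycles. For the inductive step from $r-1$ to $r\ge 4$, let $H\sub G^r_{n,p}$ be $C^{(r)}_{2\ell}$-free and fix a threshold $T=C\ell$. Define the truncated $(r-1)$-shadow
\[
\partial^* H:=\l\{S\in \binom{V}{r-1}:|\{e\in H:S\sub e\}|\ge T\r\}.
\]
The crucial structural lemma is that $\partial^* H$ is $C^{(r-1)}_{2\ell}$-free: any copy of $C^{(r-1)}_{2\ell}$ in $\partial^* H$ lifts to a copy of $C^{(r)}_{2\ell}$ in $H$ by greedily selecting, for each of its $2\ell$ shadow edges $f$, a ``private'' extra vertex $v\in e\sm f$ from some $r$-edge $e\in H$ containing $f$; because each such $f$ has codegree at least $T$, a fresh private vertex is always available. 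Applying the induction hypothesis to $\partial^* H$ inside the appropriate $(r-1)$-uniform random host bounds $|\partial^* H|$, and the $r$-edges of $H$ lying over low-codegree shadow sets contribute only $T\cdot \binom{n}{r-1}$, a lower-order term.

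The \textbf{main obstacle} is coupling the $(r-1)$-shadow of $G^r_{n,p}$ to a random $(r-1)$-graph of the form $G^{r-1}_{n,p'}$ to which the induction hypothesis applies: a priori the shadow is strongly correlated and can be denser than the independent random model. To handle this, one must either sparsify the shadow into a subgraph stochastically dominated by $G^{r-1}_{n,p'}$ for an appropriate $p'$, or replace the straight random-Tur\'an induction with an induction on a balanced supersaturation statement used together with the hypergraph container method, which is effectively the approach of \cite{nie2024turan,mubayi2023random}. A secondary concern is bounding the accumulated polylog overhead so that the exponent of $\log n$ remains bounded across the $r-2$ levels of induction.
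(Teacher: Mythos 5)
Your high-level strategy---reduce to a lower uniformity via shadows, run the argument through balanced supersaturation rather than directly on the random graph, and finish with hypergraph containers---is indeed the route the paper takes, packaged as the Lifting Theorems (Theorems~\ref{lemma:BSviaSHADOW} and~\ref{lemma:BSviaGreedy}) together with \Cref{Lemma:General Random Turan} and \Cref{lem:generalLowerRandomTuran}. But the proposal has a genuine gap in the base case, and the fix you offer for the ``main obstacle'' is not the right one.

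\textbf{The base case is wrong.} You assert that ``the base case $r=3$ is the Mubayi--Yepremyan $2$-shadow argument, which reduces the problem to the Morris--Saxton bound'' with exponent $1+\frac{1}{2\ell-1}$. It does not: tracking the parameters through the $2$-shadow lift from $r_0=2$ to $r=3$ (e.g.\ via the formula for $\tau_r$ in \Cref{lemma:BSviaSHADOW}) yields only the weaker exponent $1+\frac{1}{2\ell-2}$, and the tight bound for $C_{2\ell}^{(3)}$ is in fact \emph{open}---this is exactly the ``peculiar situation'' the paper flags immediately after \Cref{thm:looseCycles}. If your induction from a supposedly tight $r=3$ base were correct, it would ``prove'' that open case. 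What actually happens is that the $2$-shadow lift applied \emph{directly} from $r_0=2$ to any $r\ge 4$ does hit $1+\frac{1}{2\ell-1}$ (this is Mubayi--Yepremyan), and, independently, the greedy $(r-1)$-shadow lift of \Cref{lemma:BSviaGreedy} from $r=3$ to $r=4$ repairs the loss incurred at $r=3$ (this is Nie). Either fix works; a naive induction from $r=3$ as stated does not.

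\textbf{The coupling fix is a dead end.} You frame the obstacle as needing to ``couple the $(r-1)$-shadow of $G^r_{n,p}$ to a random $(r-1)$-graph $G^{r-1}_{n,p'}$,'' and offer stochastic domination as one resolution. The balanced-supersaturation route---which you correctly identify as the alternative and which is what the cited papers and this paper actually do---does not involve randomness in the inductive step at all. The statement being propagated (from $F$ to $F^{(r)}$) is the deterministic claim that every $n$-vertex $r$-graph with $m\ge M(n)$ edges contains a well-spread family of copies of $F^{(r)}$; one proves it by looking at the shadow of the \emph{given} dense $H$ deterministically and expanding. The random hypergraph enters only at the final container step. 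There is no coupling to perform, and a coupling to $G^{r-1}_{n,p'}$ would in any case fail because the $(r-1)$-shadow of $H\subseteq G^r_{n,p}$ is far denser and far more correlated than any independent model of comparable size.

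Two smaller points: a single constant codegree threshold $T=C\ell$ is too crude to obtain the $\Delta_i$ bounds required by containers---this is why the paper needs the multi-scale regularization of \Cref{lem:superregularize} and \Cref{lemma:regularize2}. And the ``$T\cdot\binom{n}{r-1}$ is lower-order'' accounting is imprecise, since a given $r$-edge can sit over both high- and low-codegree $(r-1)$-shadows; the clean version iteratively deletes edges until every surviving shadow has codegree $0$ or $\ge T$, losing only $O(n^{r-1})=O(m)$ edges, which is a constant-fraction (not a lower-order) loss at the supersaturation threshold.
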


We emphasize here the peculiar situation that tight bounds for loose cycles are unknown when $r=3$, despite tight bounds being known for $r=2$ due to \cite{morris2016number} and for $r\ge 4$ due to \Cref{thm:looseCycles}.   We also note that very recently, Jiang and Longbrake~\cite{jiang2024number} improved upon the bounds of \Cref{thm:looseCycles} by obtaining tight asymptotic bounds for $r\ge 5$ and $p$ sufficiently large through developing a supersaturation version of the celebrated delta-system method.

For general hypergraph expansions, the behavior of $\ex(G_{n,p}^r,F^{(r)})$ when both $r$ and $p$ are large was determined by Nie~\cite{nie2023random}.

\begin{thm}[Theorem 1.3,~\cite{nie2023random}]\label{thm:weakExpansion}
If $F$ is an $r_0$-graph, then for all $r\ge v(F)$ and\\ $p\ge n^{-\frac{r-v(F)}{r-v(F)+1}\cdot \frac{1}{d_r(F^{(r)})}}(\log n)^{\Omega(1)}$, we have a.a.s.
$$
\ex(G^r_{n,p}, F^{(r)})=\Theta\l(pn^{r-1}\r).
$$
\end{thm}
In fact, a somewhat stronger result is proven in \cite{nie2023random} where the $v(F)$ term throughout \Cref{thm:weakExpansion} can be replaced by a somewhat smaller integer $k$, but in either case the theorem fails to determine the behavior of $\ex(G_{n,p}^r,F^{(r)})$ for all ranges of $p$ for any hypergraph $F$ outside of certain classes of hypergraph trees.


This failure to determine the random Tur\'an number for all values of $p$ and nearly all hypergraphs is largely due to the fact that $\ex(G_{n,p}^r,F^{(r)})$ can exhibit wildly differing behaviors for different choices of $F$.  For example, if $F$ is an even cycle, then \Cref{thm:looseCycles} shows that $\ex(G_{n,p}^r,F^{(r)})=\Theta(pn^{r-1})$ for all $p\gg n^{\frac{-1}{d_r(F^{(r)})}}$.  On the other hand, if $F=C_3$ then it is known \cite{nie2021triangle,nie2023random} that $\ex(G_{n,p},F^{(r)})\gg p n^{r-1}$ for $n^{\frac{-1}{d_r(F^{(r)})}}\ll p \ll n^{-\frac{r-v(F)}{r-v(F)+1}\cdot \frac{1}{d_r(F^{(r)})}}$, which in particular shows that the range of $p$ for which \Cref{thm:weakExpansion} holds can not be increased for arbitrary $F$.  More generally, it is known that $\ex(G_{n,p}^r,F^{(r)})$ will be substantially larger than $pn^{r-1}$ for certain ranges of $p\gg n^{\frac{-1}{d_r(F^{(r)})}}$ whenever $F^{(r)}$ is not Sidorenko, and in these cases the behavior of $\ex(G_{n,p}^r,F^{(r)})$ is largely unknown even in the simple case of $F=C_5$.

\section{Main Results}

Our main result is a significant extension of \Cref{thm:weakExpansion}, essentially classifying which hypergraphs $F$ are such that the random Tur\'an numbers $\ex(G_{n,p}^r,F^{(r)})$ for large $r$ exhibits three simple ranges of behavior analogous to the ranges predicted in \Cref{conj:MS} for graphs.  To state this classification, we say that an $r_0$-graph $F$ with $|F|\ge 3$ is \textit{strictly $r_0$-balanced} if $d_{r_0}(F)=\frac{|F|-1}{v(F)-r_0}$ and if $\frac{|F'|-1}{v(F')-r_0}<d_{r_0}(F)$ for every proper subgraph $F'\sub F$ with at least two edges.  We also let $\Del(F)$ denote the maximum degree of $F$.  
\begin{thm}\label{thm:strongExpansion}
    Let $F$ be a strictly $r_0$-balanced $r_0$-graph with $2\le \Del(F)<|F|$.  If $F$ is Sidorenko, then for all $r> |F|^2v(F)r_0$, we have a.a.s.
    $$
    \ex(G^r_{n,p}, F^{(r)})=
    \l\{
    \begin{aligned}
        & \max\l\{\Theta(pn^{r-1}),n^{r_0-\frac{1}{d_{r_0}(F)}}(\log n)^{\Theta(1)}\r\},~~&\t{if}~p\gg n^{-r+r_0-\frac{1}{d_{r_0}(F)}}\\
        &(1+o(1))p\binom{n}{r},~~&\t{if}~ n^{-r}\ll p\ll  n^{-r+r_0-\frac{1}{d_{r_0}(F)}}.\\
    \end{aligned}
    \r. $$
    Moreover, if there does not exist any $k$ such that $F^{(k)}$ is Sidorenko, then these bounds fail to hold for all $r$.
\end{thm}

As an aside, it is unknown whether or not there exist hypergraphs $F$ which are not Sidorenko and which have some expansion $F^{(k)}$ that is Sidorenko~\cite[Question 5.2]{nie2023sidorenko}, so the hypothesis in the``moreover'' part of this statement may in fact be equivalent to saying that $F$ is not Sidorenko.

While the bound $r>|F|^2v(F)r_0$ from \Cref{thm:strongExpansion} is not very large given the high level of generality of the theorem statement, it is natural to ask if these bounds can be improved for specific choices of $F$.  In particular, the following problem is of interest.
\begin{quest}\label{prob:Sidorenko}
    Which Sidorenko $r_0$-graphs $F$ are such that $\ex(n,F)=\omega(n^{r_0-1})$ and are such that the tight bounds of \Cref{thm:strongExpansion} hold for every $r$ with $\ex(n,F^{(r)})=\Theta(n^{r-1})$?
\end{quest}

The hypothesis that $\ex(n,F^{(r)})=\Theta(n^{r-1})$ is necessary for the bounds of \Cref{thm:strongExpansion} to hold, as otherwise the stated bound for $p=1$ would be incorrect.  The condition $\ex(n,F)=\omega(n^{r_0-1})$ is introduced primarily to avoid trivial solutions to this problem, since without this condition, every large expansion of a Sidorenko hypergraph would give a positive answer by \Cref{thm:strongExpansion}.

As far as we are aware, there are no known examples for which either positive or negative answers to \Cref{prob:Sidorenko} are known, and in particular, it may very well be the case that \Cref{prob:Sidorenko} has a positive answer for \textit{every} Sidorenko hypergraph.  The closest the current literature comes to verifying any case of \Cref{prob:Sidorenko} is \Cref{thm:looseCycles} for $F$ an even cycle, giving tight bounds for all $r\ge 4$ (c.f.\ \Cref{prob:Sidorenko} which asks for $r\ge 3$ in this case).  

Although we remain unable to answer \Cref{prob:Sidorenko} even for the simplest even cycle $C_4=K_{2,2}$, we can give a positive answer for every other $K_{2,t}$, giving the first (infinite number of) positive answers to \Cref{prob:Sidorenko}.

\begin{thm}\label{theorem:K2t_RT}
For all integers $r,t\ge 3$ and all $0<p=p(n)\le 1$ we have a.a.s.
$$
    \ex(G^r_{n,p}, K_{2,t}^{(r)})=
    \l\{
    \begin{aligned}
        & \max\{\Theta(pn^{r-1}),n^{\frac{3t-1}{2t-1}}(\log n)^{\Theta(1)}\},~~&\t{if}~p\gg n^{-r+\frac{3t-1}{2t-1}}\\
        &(1+o(1))p\binom{n}{r},~~&\t{if}~ n^{-r}\ll p\ll  n^{-r+\frac{3t-1}{2t-1}}.\\
    \end{aligned}
    \r.
    $$
\end{thm}

More generally, we can significantly improve upon the dependency on $r$ from \Cref{thm:strongExpansion} for all complete bipartite graphs $K_{s,t}$, though optimal bounds only hold for $s=2$.

\begin{thm}\label{thm:Kst}
    If $r,s,t\ge 2$ are integers such that $t\ge s$ and either $t\ge s^2-2s+3$ and $r\ge s+1$ or $r\ge s+2$, then for all $0<p=p(n)\le 1$ we have a.a.s.
    $$
    \ex(G^r_{n,p}, K_{s,t}^{(r)})=
    \l\{
    \begin{aligned}
        & \max\{\Theta(pn^{r-1}),n^{2-\frac{s+t-2}{st-1}}(\log n)^{\Theta(1)}\},~~&\t{if}~p\gg n^{-r+2-\frac{s+t-2}{st-1}}\\
        &(1+o(1))p\binom{n}{r},~~&\t{if}~ n^{-r}\ll p\ll  n^{-r+2-\frac{s+t-2}{st-1}}.\\
    \end{aligned}
    \r.
    $$
\end{thm}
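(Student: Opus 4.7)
The plan is to deduce \Cref{thm:Kst} by invoking the Lifting Theorems stated in the next subsection, with input the random Tur\'an bounds for the graph $K_{s,t}$ due to Morris--Saxton together with the usual complementary lower bound constructions.

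For the lower bound I rely on two standard constructions, each of which produces a $K_{s,t}^{(r)}$-free subgraph of $G_{n,p}^r$. The first is the ``star'' $\{e\in\binom{[n]}{r}:S\sub e\}$ for any fixed $(r-1)$-set $S$: every pair of vertices inside $S$ is contained in $n-r+1$ star-edges, while every pair of vertices in $K_{s,t}^{(r)}$ lies in at most one of its edges, so the star is $K_{s,t}^{(r)}$-free. Chernoff's inequality gives $\Theta(pn^{r-1})$ edges of $G_{n,p}^r$ inside this star a.a.s.\ whenever $pn^{r-1}\gg\log n$. The second is a random-algebraic or projective-norm-type $r$-graph construction furnishing an $n$-vertex $K_{s,t}^{(r)}$-free $r$-graph with $\Omega(n^{2+(s+t-2)/(st-1)})$ edges, which can be transported into $G_{n,p}^r$ in the indicated range by a standard sampling/intersection argument. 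Taking the maximum of the two produces the claimed lower bound in the main regime.

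For the upper bound when $p\gg n^{-r+2+(s+t-2)/(st-1)}$, I would apply one of the Lifting Theorems with input the $2$-graph $F=K_{s,t}$. The Lifting Theorem transports a balanced supersaturation result for $K_{s,t}$ (of the form proved en route to the Morris--Saxton random Tur\'an bound) into the upper bound
$$\ex(G_{n,p}^r,K_{s,t}^{(r)})=O\!\l(\max\{pn^{r-1},\,n^{2+\f{s+t-2}{st-1}}\}\cdot(\log n)^{O(1)}\r).$$
The dichotomy in the hypothesis on $r$ corresponds to the availability of two distinct Lifting Theorems in the next subsection: a ``weaker'' one applicable for $r\ge s+1$ only when $t\ge s^2-2s+3$ (which is precisely the range in which the relevant balanced supersaturation for $K_{s,t}$ achieves a sharper spread), and a ``stronger'' one applicable for all $t\ge s$ but only when $r\ge s+2$.

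In the sparse regime $n^{-r}\ll p\ll n^{-r+2+(s+t-2)/(st-1)}$, a first-moment plus deletion argument suffices: the expected number of copies of $K_{s,t}^{(r)}$ in $G_{n,p}^r$ is $O(n^{s+t+(r-2)st}p^{st})$, and a direct calculation shows this is $o(p\binom{n}{r})$ throughout this range, so removing one edge from each copy yields a $K_{s,t}^{(r)}$-free subgraph with $(1-o(1))p\binom{n}{r}$ edges a.a.s. The principal technical obstacle is verifying that a balanced supersaturation theorem of the exact form required by the Lifting Theorems holds for $K_{s,t}$ in the stated parameter range; once this input is secured, propagating the exponents through the lifting is routine bookkeeping.
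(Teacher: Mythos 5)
Your high-level plan---balanced supersaturation for the graph $K_{s,t}$, propagated through Lifting Theorems to $K_{s,t}^{(r)}$, combined with containers for the upper bound and standard constructions plus a deletion argument for the lower bounds---matches the paper's strategy in outline, but several details are misidentified and one is a genuine error. The error is in the lower bound: you propose a ``random-algebraic or projective-norm-type'' deterministic $K_{s,t}^{(r)}$-free $r$-graph with $\Omega(n^{2+(s+t-2)/(st-1)})$ edges to be ``transported'' into $G_{n,p}^r$. No such construction is used, and it cannot play the role you describe: the quantity $n^{r-1/d_r(K_{s,t}^{(r)})}$ in the answer is not an extremal Tur\'an number (which for $r\ge s+1$ is $\Theta(n^{r-1})$, strictly larger) but the value of $\ex(G_{n,p}^r,F)$ at the critical $p$; and intersecting a fixed $F$-free host with $G_{n,p}^r$ only yields $\Theta(p\cdot\text{its size})$ edges, so this route cannot produce a $p$-free lower bound. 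The correct mechanism, as in \Cref{lem:generalLowerRandomTuran}, is monotonicity of $p\mapsto\ex(G_{n,p}^r,F)$ together with the first-moment deletion argument applied at a $q$ just below the appearance threshold.

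You also misattribute the dichotomy in the hypotheses. The split between $r\ge s+2$ and $r=s+1$ with $t\ge s^2-2s+3$ does not come from choosing between two different Lifting Theorems; in the paper both are used in sequence in every case. \Cref{lemma:BSviaSHADOW} (Shadow Expansion) first lifts the Morris--Saxton balanced supersaturation for the graph $K_{s,t}$ to $K_{s,t}^{(s)}$, and \Cref{lemma:BSviaGreedy} (Greedy Expansion) is then applied inductively to reach $K_{s,t}^{(r)}$ for $r\ge s+1$, which is exactly the range where $K_{s,t}^{(r)}$ is a spanning subgraph of a tight $r$-tree (\Cref{lemma:tighttree_Kst}). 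The constraint on $t$ when $r=s+1$ arises only in the endgame of converting the resulting balanced supersaturation into a random Tur\'an bound: the container upper bound is a three-way maximum, and one must verify that the middle term $p^{\,\cdot}n^{(s+1)/2}$ never dominates, which after computation reduces to $t\ge s^2-2s+3$ when $r=s+1$ (and holds automatically for $r\ge s+2$). Finally, the balanced supersaturation theorem for $K_{s,t}$ is not the obstacle you suggest---it is the known input from \cite{morris2016number}; the real technical work is the two-stage lifting and this final optimization.
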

It is possible that these tight bounds for $\ex(G_{n,p}^r,K_{s,t}^{(r)})$ continue to hold for any $r$ such that ${r\choose 2}\ge s$, as this is precisely the range where $\ex(n,K_{s,t}^{(r)})=\Theta(n^{r-1})$ for $t$ sufficiently large \cite[Theorem 1.4]{ma2018some}, but \Cref{thm:Kst} is the best bound for $r$ that can be obtained with our general methods and the known (tight) random Tur\'an bounds for the graph $K_{s,t}$ \cite{morris2016number}.  This being said, in forthcoming work \cite{nie202X}, we use a significantly more technical argument tailored specifically towards $K_{s,t}$ expansions in order to go beyond \Cref{thm:Kst} and prove the same result for $r$ at least roughly $2s/3+1$, in particular giving another positive answer to \Cref{prob:Sidorenko} for $K_{4,t}$ and $t$ sufficiently large.

Finally, we give a positive solution to \Cref{prob:Sidorenko} for theta graphs with sufficiently many paths.  Here the \textit{theta graph } $\theta_{a,b}$ is defined to be the graph consisting of $a$ internally disjoint paths of length $b$ between a common pair of vertices $u,v$.  For example, $\theta_{2,b}=C_{2b}$ and $\theta_{a,2}=K_{2,a}$.  Our general methods give good bounds on $\theta_{a,b}^{(r)}$ whenever effective bounds for the random Tur\'an number of $\theta_{a,b}$ are known and $a\ge 3$ (see \Cref{prop:thetaBS}), and in particular, recent results of McKinley and Spiro~\cite{mckinley2023random} give the following.
\begin{thm}\label{theorem:theta_RT}
If $r\ge 3$, $a\ge 100$, and $b\ge 3$ are integers, then for all $0<p=p(n)\le 1$, we have a.a.s.

 $$
    \ex(G^r_{n,p}, \theta_{a,b}^{(r)})=
    \l\{
    \begin{aligned}
        & \max\{\Theta(pn^{r-1}),n^{1+\frac{a-1}{ab-1}}(\log n)^{\Theta(1)}\},~~&\t{if}~p\gg n^{-r+1+\frac{a-1}{ab-1}}\\
        &(1+o(1))p\binom{n}{r},~~&\t{if}~ n^{-r}\ll p\ll  n^{-r+1+\frac{a-1}{ab-1}}.\\
    \end{aligned}
    \r.
    $$
\end{thm}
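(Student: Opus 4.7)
The proof strategy is to reduce to the graph case $r = 2$ via \Cref{prop:thetaBS} (the specialization of the paper's Lifting Theorems to theta graphs, valid whenever $a \geq 3$), and to feed in the recent result of McKinley and Spiro \cite{mckinley2023random} as the graph input. For $a \geq 100$ and $b \geq 3$ their theorem determines $\ex(G_{n,p}, \theta_{a,b})$ up to constants and polylog factors in every range of $p$; plugging this into \Cref{prop:thetaBS} yields the upper bounds claimed in Theorem \ref{theorem:theta_RT}, with the exponent $n^{1+(a-1)/(ab-1)}$ preserved and the graph ``degree'' term $pn$ upgraded by the lifting to the hypergraph shadow term $pn^{r-1}$. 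In the small-$p$ regime the upper bound $(1+o(1))p\binom{n}{r}$ is immediate, since $G_{n,p}^r$ itself has roughly this many edges a.a.s.

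For the matching lower bounds I would use two standard constructions. The $\Theta(pn^{r-1})$ term is witnessed a.a.s.\ by a star at any fixed vertex $v$: the edges of $G_{n,p}^r$ containing $v$ number $\Theta(pn^{r-1})$ by Chernoff, and they are $\theta_{a,b}^{(r)}$-free because the maximum vertex degree in $\theta_{a,b}^{(r)}$ is $a < ab$, so no copy can have all of its $ab$ edges passing through a single vertex. The term $n^{1+(a-1)/(ab-1)}(\log n)^{\Theta(1)}$ is witnessed by a sunflower construction: fix $r-2$ common ``padding'' vertices $S$ and consider the link $L_S := \{\{u,w\} : S \cup \{u,w\} \in E(G_{n,p}^r)\}$, which is distributed as $G_{n-r+2,p}$. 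The McKinley--Spiro lower bound shows $L_S$ contains a $\theta_{a,b}$-free subgraph of the desired size a.a.s., and adjoining $S$ to every such edge produces a $\theta_{a,b}^{(r)}$-free subhypergraph: any hypothetical copy of $\theta_{a,b}^{(r)}$ would require $r-2$ \emph{private} new padding vertices per edge, contradicting that every edge shares the common padding set $S$.

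In the small-$p$ regime the matching lower bound follows from a direct deletion argument, since the expected number of copies of $\theta_{a,b}^{(r)}$ in $G_{n,p}^r$ is $o(p\binom{n}{r})$ precisely when $p \ll n^{-r+1+(a-1)/(ab-1)}$, so deleting one edge per copy is negligible. The principal obstacle is verifying the hypothesis of \Cref{prop:thetaBS}: the Lifting Theorems do not accept the McKinley--Spiro random Tur\'an bound as a black box but rather require a balanced supersaturation version of it for $\theta_{a,b}$. Fortunately such a statement is essentially the engine of their proof, and inspection shows that it meets the technical requirements; the restriction $a \geq 100$ inherits directly from their supersaturation estimate, while $b \geq 3$ rules out $\theta_{a,2} = K_{2,a}$, whose random Tur\'an behavior follows the different exponent already covered by Theorem \ref{theorem:K2t_RT}.
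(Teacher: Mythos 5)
Your overall reduction is exactly the paper's: verify the hypothesis of \Cref{prop:thetaBS} via McKinley--Spiro's balanced supersaturation for theta graphs (\Cref{theorem:MS_BS_Theta}), and then read off the conclusion. You also correctly identify that the black box needed is the balanced supersaturation statement, not merely the random Tur\'an upper bound. However, you describe \Cref{prop:thetaBS} as supplying only the upper bounds, whereas in the paper it already delivers the full two-sided statement (the lower bounds are baked in via \Cref{cor:randomTuranBounds} and \Cref{lem:generalLowerRandomTuran}, which \Cref{thm:optimalBalanced} invokes). So in the paper the theorem really does follow ``immediately'' once the hypothesis of \Cref{prop:thetaBS} is checked against \Cref{theorem:MS_BS_Theta}.

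The explicit lower-bound constructions you add as a substitute have one genuine gap. The star argument is fine and matches the paper's appeal to $\ex(n,\theta_{a,b}^{(r)})=\Theta(n^{r-1})$. But the sunflower construction only certifies the $n^{1+\frac{a-1}{ab-1}}(\log n)^{\Theta(1)}$ term when $p\gg n^{-1+\frac{a-1}{ab-1}}$, because that is the threshold at which McKinley--Spiro's graph lower bound for $\theta_{a,b}$ in the link $L_S\sim G_{n-r+2,p}$ reaches size $n^{1+\frac{a-1}{ab-1}}$; in the remaining regime $n^{-r+1+\frac{a-1}{ab-1}}\ll p\ll n^{-1+\frac{a-1}{ab-1}}$ (nonempty for every $r\ge 3$), the link has only $\Theta(pn^2)\ll n^{1+\frac{a-1}{ab-1}}$ edges, so the construction falls short. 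The paper sidesteps this in \Cref{lem:generalLowerRandomTuran} by monotonicity of $\ex(G_{n,p}^r,F)$ in $p$: run the deletion argument at $q=n^{-\frac{1}{d_r(F^{(r)})}}(\log\log n)^{-1}$ to get $\Omega(q\binom{n}{r})=n^{1+\frac{a-1}{ab-1}}(\log n)^{-1}$, and this pushes up to all larger $p$. (A smaller point: in justifying that the sunflower is $\theta_{a,b}^{(r)}$-free, you should also rule out vertices of $S$ serving as \emph{core} vertices, not just as expansion vertices; this follows since every edge of $\theta_{a,b}$ would then contain all of $S$, which is impossible when $\theta_{a,b}$ has $ab\ge 2$ edges. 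Note also that once this is observed, $\theta_{a,b}$-freeness of the chosen subgraph of $L_S$ plays no role: the full sunflower is already $\theta_{a,b}^{(r)}$-free, which is a hint that this construction cannot be the right witness for the $n^{1+\frac{a-1}{ab-1}}$ term.)
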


\subsection{General Tools for Random Tur\'an Problems}
In the process of proving the results detailed above, we will establish a number of general tools for random Tur\'an problems (listed out below) which we expect will be of use in future study into this area, especially for problems related to hypergraph expansions.  In particular, we note that this collection of results are enough to fairly easily recover every previously known upper bound for random Tur\'an problems of expansions \cite{mubayi2023random,nie2021triangle,nie2023random,nie2024turan}. 

We begin with a very basic result that esatablishes effective bounds for arbitrary hypergraphs $F$.

\begin{prop}\label{lem:generalLowerRandomTuran}
    Let $F$ be an $r$-graph with $\Delta(F)\ge 2$.  If $n^{-r}\ll p \ll n^{-\frac{1}{d_r(F)}}$, then a.a.s.
    \[\ex(G_{n,p}^r,F)=(1+o(1))p {n\choose r}.\]
    If $p\gg  n^{-\frac{1}{d_r(F)}}$, then a.a.s.
    \[\ex(G_{n,p}^r,F)\ge  \max\{\Omega(p\cdot \ex(n,F)),n^{r-\frac{1}{d_r(F)}}(\log n)^{-1}\}.\]
\end{prop}

With \Cref{lem:generalLowerRandomTuran} in hand, all that is needed to prove tight bounds for $\ex(G_{n,p}^r,F)$ are effective upper bounds on the problem whenever $p\gg n^{-1/d_r(F)}$, which is usually the most challenging component of random Tur\'an problems.  Establishing such bounds typically requires proving a ``balanced supersaturation'' result, which informally says that if an $r$-graph $H$ has many edges, then one can find a collection $\c{H}$ of copies of $F$ in $H$ which is ``spread out'' in the sense that no set of edges in $H$ lies in too many copies of $F$ in $\c{H}$.  

More precisely, we will make use of the following definition, where here for a hypergraph $\c{H}$ and $S\sub V(\c{H})$ we define the degree $\deg_{\c{H}}(S)$ to be the number of edges of $\c{H}$ containing $S$, and we define $\Del_i(\c{H})=\max\{\deg_{\c{H}}(S):S\sub V(\c{H}),\ |S|=i\}$. 

\begin{defn}
    Given positive functions $M=M(n),\ \gamma=\gamma(n)$, and $\tau=\tau(n,m)$, we say that an $r$-graph $F$ is \emph{$(M,\gamma,\tau)$-balanced} if for every $n$-vertex $m$-edge $r$-graph $H$ with $n$ sufficiently large and $m\ge M(n)$, there exists a non-empty collection $\c{H}$ of copies of $F$ in $H$ (which we view as an $|F|$-uniform hypergraph on $E(H)$) such that, for all integers $1\le i\le |F|$,
    \begin{equation}\label{equation:Delta}
    \Delta_i(\c{H})\le \frac{\gamma(n)|\c{H}|}{m}\l(\frac{\tau(n,m)}{m}\r)^{i-1}.        
    \end{equation}
\end{defn}
In other words, $F$ is $(M,\gamma,\tau)$-balanced if for every $r$-graph with sufficiently many vertices and a large number of edges (i.e.\ $m\ge M(n)$), we can find a collection of copies of $F$ which are spread out in the sense of \eqref{equation:Delta}. 

This definition together with a standard application of hypergraph containers yields the following result. 

\begin{prop}\label{Lemma:General Random Turan}
Let $F$ be an $r$-graph with $r\ge2$.  If there exists a  $C>0$ and positive functions $M=M(n)$ and $\tau=\tau(n,m)$ such that 
\begin{itemize}
    \item[(a)] $F$ is $(M,(\log n)^{C}, \tau)$-balanced, and
    \item[(b)] For all sufficiently large $n$ and $m\ge M(n)$, the function $\tau(n,m)$ is non-increasing with respect to $m$ and satisfies $\tau(n,m)\ge 1$, 
\end{itemize}
then there exists $C'\ge 0$ such that for all sufficiently large $n$, $m\ge M(n)$, and $0<p\le 1$ with $pm\rightarrow\infty$ as $n\rightarrow \infty$, we have a.a.s.
$$
\ex(G^r_{n,p},F)\le \max\l\{C'pm,\tau(n,m)(\log n)^{C'}\r\}.
$$
\end{prop}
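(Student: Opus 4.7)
The plan is to combine the balanced supersaturation hypothesis (a) with an iterative application of the hypergraph container method of Balogh--Morris--Samotij and Saxton--Thomason, followed by a Chernoff-plus-union-bound argument. Throughout, fix $n$ sufficiently large, $m\ge M(n)$, and $0<p\le 1$ with $pm\to\infty$, and write $\gam(n)=(\log n)^C$.

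\textbf{Step 1: One step of containers via balanced supersaturation.} Given any $r$-graph $H$ on $[n]$ with $e(H)\ge M(n)$, hypothesis (a) provides a collection $\c{H}$ of copies of $F$ in $H$ whose degree sequence satisfies \eqref{equation:Delta}. Viewing $\c{H}$ as an $|F|$-uniform hypergraph on the vertex set $E(H)$, these are exactly the co-degree conditions required by the container lemma. One application yields a family of subcontainers of $E(H)$, each of size at most (say) $\f{1}{2}e(H)$, such that every $F$-free subset of $E(H)$ is contained in one of them, and the number of subcontainers produced is at most $\exp\!\l(\tau(n,e(H))(\log n)^{C+O(1)}\r)$.

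\textbf{Step 2: Iteration and bookkeeping.} Starting from the single container $E(K_n^r)$, iterate Step~1 on every current container whose size exceeds $m$, and halt once every container $C\in\c{C}$ has $|C|\le m$. Container sizes at least halve at each stage, so the process terminates within $O(\log n)$ rounds. During every round each intermediate container has size $m'\ge m$, and hypothesis (b) yields $\tau(n,m')\le\tau(n,m)$, so the total number of containers satisfies
\[
\log|\c{C}|\le \tau(n,m)(\log n)^{C+O(1)}.
\]
By construction every $F$-free $r$-graph on $[n]$ sits inside some $C\in\c{C}$, and in particular every $F$-free subgraph of $G_{n,p}^r$ is contained in some $C\in\c{C}$ with $|C|\le m$, so it contributes at most $|E(G_{n,p}^r)\cap C|$ edges.

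\textbf{Step 3: Chernoff and union bound.} Let $T:=\max\{C'pm,\tau(n,m)(\log n)^{C'}\}$, where $C'$ will be chosen sufficiently large relative to $C$. For each $C\in\c{C}$, $|E(G_{n,p}^r)\cap C|$ is binomial with mean at most $pm$, and a standard Chernoff bound gives
\[
\Pr\!\l[|E(G_{n,p}^r)\cap C|>T\r]\le \exp\!\l(-\Om(T)\r),
\]
uniformly across the two regimes (when $T=C'pm$ this uses $C'\gg 1$ together with $pm\to\infty$; when $T=\tau(n,m)(\log n)^{C'}$ this uses $T\gg pm$ combined with the $(ep m/T)^T$-form of Chernoff). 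A union bound then gives
\[
\Pr\!\l[\ex(G_{n,p}^r,F)>T\r]\le |\c{C}|\cdot\exp\!\l(-\Om(T)\r)\le \exp\!\l(\tau(n,m)(\log n)^{C+O(1)}-\Om(T)\r),
\]
which is $o(1)$ provided $C'$ is taken strictly larger than $C+O(1)$, since then $T$ dominates $\tau(n,m)(\log n)^{C+O(1)}$ in either regime.

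\textbf{Main obstacle.} The genuinely delicate piece is Step~2: one must verify that hypothesis (a) can be invoked at every scale encountered during the iteration (using $M(n)\le m\le m'$), use the monotonicity in (b) to pull every $\tau(n,m')$ back to $\tau(n,m)$, and absorb the $O(\log n)$ rounds of accumulated polylogarithmic losses into a single $(\log n)^{C+O(1)}$ factor. This is standard in the container literature but needs to be set up once cleanly in the generality stated; the remaining probabilistic estimates in Step~3 are then a routine application of Chernoff.
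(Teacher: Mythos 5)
Your proposal is correct and follows essentially the same route as the paper: iteratively apply the container lemma (using the balanced supersaturation hypothesis to verify the co-degree conditions) until all containers have at most $m$ edges, track the container count as $\exp(\tau(n,m)(\log n)^{O(1)})$ using the monotonicity of $\tau$, and then apply a first-moment / Chernoff argument over the resulting containers. The only cosmetic difference is that the paper bounds the expected number of $F$-free $E$-edge subgraphs via $|\mathcal{C}|\binom{m}{E}p^E$ rather than phrasing it as a per-container Chernoff tail, and it also records explicitly (via $M(n)>\ex(n,F)=\Omega(n)$) that $m\ge(\log n)^{C}$ so that the container theorem's $L\ge1$ requirement is met; you should add that small check and remember that a single application of the container lemma as stated shrinks sizes only by a factor $(1-\epsilon(\log n)^{-C})$, so the ``halving'' in your Step~1 is really $\Theta((\log n)^{C})$ applications, giving $O((\log n)^{C+1})$ rounds total—which your $(\log n)^{C+O(1)}$ bookkeeping already absorbs.
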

We emphasize that condition (b) here is quite mild: for any $\tau$ such that (a) holds, we can always apply \Cref{Lemma:General Random Turan} after replacing $\tau$ with the new function $\tau'(n,m):=1+\max_{m'\ge m}\tau(n,m')$ since (a) continues to hold if we increase $\tau$.

With \Cref{Lemma:General Random Turan} in hand, we see that to prove our main results we need only show that Sidorenko expansions $F^{(r)}$ are balanced in some appropriate sense.  To do this, we establish several ``Lifting Theorems'' which show that expansions $F^{(r)}$ are balanced whenever the base hypergraph $F$ is.  These set of Lifting Theorems provide significant generalizations to similar results of Mubayi and Yepremyan~\cite{mubayi2023random} and Nie~\cite{nie2024turan} and can be viewed as the main technical innovation of the present work.

We begin with the first result in this direction.  While the statement is rather technical, we emphasize that it is simply saying that if $F$ is $(M,\gamma,\tau)$-balanced for some reasonable choice of $M,\gamma,\tau$, then every expansion $F^{(r)}$ is also balanced in some appropriate way.

\begin{thm}\label{lemma:BSviaSHADOW}
Let $r>r_0\ge 2$ and let $F$ be an $r_0$-graph such that there exist positive functions $M_{r_0}=M_{r_0}(n)$, $\gamma_{r_0}=\gamma_{r_0}(n)$ and $\tau_{r_0}=\tau_{r_0}(n,m)$ with the following properties:
\begin{itemize}
    \item [(a)] $F$ is $(M_{r_0}, \gamma_{r_0}, \tau_{r_0})$-balanced,
    \item[(b)] For sufficiently large $n$, the function $M_{r_0}(n)n^{-\frac{r-r_0}{r-1}}$ is non-decreasing with respect to $n$,
    \item[(c)] For sufficiently large $n$, the function  $\gamma_{r_0}(n)$ is non-decreasing with respect to $n$.
    \item [(d)] For sufficiently large $n$,  $m\ge M_{r_0}(n)$, and any real $x\ge 1$, the function $\tau_{r_0}(n,m)$ is non-increasing with respect to $m$ and $\tau_{r_0}(nx,mx^{\frac{r-r_0}{r-1}})$ is non-decreasing with respect to $x$.
\end{itemize}
Then there exists a sufficiently large constant $C$ depending only on $F$ and $r$ such that for the functions
\begin{equation*}
    M_r(n):=\max\l\{M_{r_0}(n)^{\frac{r-1}{r_0-1}}n^{-\frac{r-r_0}{r_0-1}},~n^{r-1}\r\}(\log n)^{C},
\end{equation*}

\begin{equation*}
\gamma_r(n):=\gamma_{r_0}(n)(\log n)^{C},
\end{equation*}
and
\begin{equation*}
\tau_r(n,m):=\tau_{r_0}\l(n,n^{\frac{r-r_0}{r-1}}m^{\frac{r_0-1}{r-1}}(\log n)^{-C}\r)(\log n)^{C},
\end{equation*}
we have that $F^{(r)}$ is $(M_r,\gamma_r,\tau_r)$-balanced.
\end{thm}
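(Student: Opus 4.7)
The plan is to generalize the shadow-based approach of Mubayi--Yepremyan \cite{mubayi2023random} (corresponding to $r_0=2$) and Nie \cite{nie2024turan} (corresponding to $r_0=r-1$). Given an $n$-vertex $r$-graph $H$ with $m\geq M_r(n)$ edges, I would build the desired spread collection $\c{H}$ of copies of $F^{(r)}$ by first passing to a regularized subgraph of the $r_0$-shadow of $H$, then applying the balance hypothesis for $F$ there to obtain a collection $\c{F}'$ of $F$-copies, and finally lifting each $F$-copy back to many copies of $F^{(r)}$ in $H$.

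First, by dyadically bucketing the $r_0$-subsets of $V(H)$ by their codegree in $H$ and passing to a bucket carrying many incident pairs $(S,e)$, I would extract a subgraph $H^*\sub H$ with $m^*:=|E(H^*)|\geq m(\log n)^{-O(1)}$ and some $d\geq 1$ such that $d\leq \deg_{H^*}(S)\leq 2d$ (up to polylog slack on the low end) for every $r_0$-subset $S\in \partial_{r_0}(H^*)$. Double counting pairs $(S,e)$ with $S\sub e$ gives $m':=|E(\partial_{r_0}(H^*))|\asymp m^*/d$. Arranging $d\asymp (m/n)^{(r-r_0)/(r-1)}(\log n)^{-O(1)}$ yields
\[
m'\asymp n^{(r-r_0)/(r-1)}m^{(r_0-1)/(r-1)}(\log n)^{-O(1)},
\]
and the hypothesis $m\geq M_r(n)=M_{r_0}(n)^{(r-1)/(r_0-1)}n^{-(r-r_0)/(r_0-1)}(\log n)^C$ translates algebraically into $m'\geq M_{r_0}(n)$. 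Applying the balance of $F$ to $H':=\partial_{r_0}(H^*)$ then gives a nonempty collection $\c{F}'$ of $F$-copies in $H'$ satisfying $\Delta_i(\c{F}')\leq \gamma_{r_0}(n)|\c{F}'|/m'\cdot (\tau_{r_0}(n,m')/m')^{i-1}$ for all $1\leq i\leq |E(F)|$.

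Next, for each $f'\in\c{F}'$ I would extend it to copies of $F^{(r)}$ in $H^*$ by choosing, for each of its $|E(F)|$ edges $S$, one of the $\geq d$ edges $e\in E(H^*)$ with $S\sub e$, treating $e\sm S$ as the $r-r_0$ inserted vertices. Since $|V(F^{(r)})|$ is a constant depending only on $F$ and $r$, and the $n^{r-1}(\log n)^C$ floor in $M_r(n)$ forces $d$ to dwarf this constant, a trivial union bound shows that a positive fraction of the $\sim d^{|E(F)|}$ extension tuples yield genuine copies of $F^{(r)}$; letting $\c{H}$ be their union gives $|\c{H}|\gtrsim d^{|E(F)|}|\c{F}'|$. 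To control $\Delta_i(\c{H})$, observe that for any $I\sub E(H)$ with $|I|=i$ and any $\Phi\in\c{H}$ containing $I$, the embedding $F^{(r)}\hookrightarrow H^*$ induces, via at most $O_{F,r}(1)$ labeling choices (an injection $I\to E(F)$ together with the $r_0$-subset of each $e\in I$ lying in the image of $V(F)$), a copy in $\c{F}'$ passing through $i$ prescribed shadow edges and at most $2d$ extension choices for each of the remaining $|E(F)|-i$ edges, yielding $\Delta_i(\c{H})\leq O_{F,r}(1)\cdot \Delta_i(\c{F}')\cdot d^{|E(F)|-i}$. Substituting the balance bound for $\c{F}'$, the estimate $|\c{F}'|\lesssim |\c{H}|/d^{|E(F)|}$, and $dm'\asymp m(\log n)^{-O(1)}$ collapses the powers of $d$ and replaces $m'$ by $m$, converting $\tau_{r_0}(n,m')$ into $\tau_r(n,m)$ as defined.

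The main obstacle lies in the cleaning step: one must dyadically regularize the $r_0$-codegrees with only a polylogarithmic loss in edge count and land on an $m'$ at precisely the right scale so that the translation of $m\geq M_r(n)$ into $m'\geq M_{r_0}(n)$ goes through. Hypotheses (b)--(d) on $M_{r_0},\gamma_{r_0},\tau_{r_0}$ are tailored so that all polylog losses absorb cleanly into $M_r,\gamma_r,\tau_r$; in particular, the scaling condition on $\tau_{r_0}(nx,mx^{(r-r_0)/(r-1)})$ in (d) is what lets us replace $\tau_{r_0}$ evaluated at the ``true'' shadow size after cleaning by $\tau_{r_0}$ evaluated at the cleaner expression used to define $\tau_r$.
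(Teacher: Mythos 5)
Your overall architecture (regularize the $r_0$-shadow codegrees, apply balance for $F$ to the shadow, lift each $F$-copy to $\asymp d^{|F|}$ copies of $F^{(r)}$, and bound $\Delta_i$ through the factorization $\Delta_i(\c{H})\lesssim \Delta_i(\c{F}')\cdot D^{|F|-i}$) matches the paper's proof, which splits into a regularization lemma and a ``shadow expansion'' lemma with exactly this content. However, there is a genuine gap at the step you yourself flag as the main obstacle.

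You write that after dyadic bucketing you can ``arrange'' $d\asymp (m/n)^{(r-r_0)/(r-1)}(\log n)^{-O(1)}$, but dyadic bucketing only hands you \emph{some} $d$ (a codegree level depending on $H$); you cannot choose it. The danger is $d$ coming out \emph{larger} than $(m^*/n^*)^{(r-r_0)/(r-1)}$, in which case $m'\asymp m^*/d$ is \emph{smaller} than the target $n^{(r-r_0)/(r-1)}m^{(r_0-1)/(r-1)}(\log n)^{-O(1)}$; since $\tau_{r_0}$ is non-increasing in $m$, this makes $\tau_{r_0}(n,m')$ too large and the final bound fails. None of hypotheses (b)--(d) rescue you here: they govern how $\tau_{r_0}$ transports under rescaling once you already have the right lower bound on the shadow size, not how to obtain that lower bound. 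The missing ingredient is a genuine upper bound on the regularized $r_0$-codegree, $\Delta_{[r_0]}\lesssim (|H''|/n'')^{(r-r_0)/(r-1)}(\log n)^{O(1)}$ (equivalently a lower bound $\partial_{[r_0]}\gtrsim (n'')^{(r-r_0)/(r-1)}|H''|^{(r_0-1)/(r-1)}$), and this is nontrivial: the paper proves it by passing to an $r$-partite subgraph, bucketing all $2^{[r]}$ codegree vectors simultaneously, running an iterative cleanup to get two-sided codegree control, and then an inductive relabeling argument across the parts (if all $\partial_{[k-1]\cup\{j\}}$ were small, then $\Delta_{[k]}$ is large, so each $k$-shadow extends to at least $\Delta_{[k]}^{1/(r-k)}$ vertices via $\deg_{H'}(\sigma)\le |U|^{r-k}$, whence $\sum_j \partial_{[k-1]\cup\{j\}}\gtrsim\partial_{[k-1]}\cdot\Delta_{[k]}^{1/(r-k)}$ is too large, a contradiction). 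Your proposal needs this codegree-bounding mechanism; a one-parameter dyadic pigeonhole on $r_0$-codegrees does not give it.
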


Our second Lifting Theorem gives stronger bounds than \Cref{lemma:BSviaSHADOW} when going from $F^{(r-1)}$ to $F^{(r)}$ but can only be applied when $r$ is sufficiently large.  To state this, we require an additional definition.

\begin{defn}\label{def:tree}
    An $r$-graph $T$ is said to be a {\em tight $r$-tree} if its edges can be ordered as $h_1,\dots,~h_t$ so that 
$$
\forall i\ge 2~\exists v\in h_i~and~1\le s\le i-1~such~that~v\not\in\cup_{j=1}^{i-1}h_j~and~h_i-v\subset h_s.
$$ 
\end{defn}

\begin{thm}\label{lemma:BSviaGreedy}
Let $F$ be an $(r-1)$-graph with $r-1,|F|\ge 2$ such that $F^{(r)}$ is a spanning subgraph of a tight $r$-tree and such that there exist positive functions $M_{r-1}=M_{r-1}(n)$, $\gamma_{r-1}=\gamma_{r-1}(n)$ and $\tau_{r-1}=\tau_{r-1}(n,m)$ with the following properties: 
\begin{itemize}
    \item[(a)] $F$ is $(M_{r-1},\gamma_{r-1},\tau_{r-1})$-balanced,
    \item[(b)] For sufficiently large $n$ the function $\gamma_{r-1}(n)$ is non-decreasing with respect to $n$,
    \item[(c)] For sufficiently large $n$ and $m\ge M_{r-1}(n)$, the function  $\tau_{r-1}(n,m)$ is non-decreasing with respect to $n$ and non-increasing with respect to $m$.
    
\end{itemize}
Then there exists a sufficiently large constant $C$ depending only on $F$  such that the following holds. Let 
\[M_r(n):=Cn^{r-1},\]
\begin{equation*}
\gamma_{r}(n):=\gamma_{r-1}(n)\cdot C\log n,  
\end{equation*}
and let $A=A(n,m)$ be any function such that, for all sufficiently large $n$ and $m\ge M_r(n)$,
\begin{itemize}
    \item[(d)]
    \begin{equation*}
        m n^{1-r}\le A(n,m)\le \frac{m}{M_{r-1}(n)\cdot C\log n},
    \end{equation*}
\end{itemize}
 
and let
\begin{equation*}
\tau_{r}(n,m):=\max\l\{A(n,m)^{-\frac{1}{d_r\l(F^{(r)}\r)}}\cdot m,~\tau_{r-1}\l(n,~\frac{m}{A(n,m)\cdot C\log n}\r)\cdot C\log n\r\}.   
\end{equation*}
Then $F^{(r)}$ is $(M_r,\gamma_{r},\tau_{r})$-balanced.
\end{thm}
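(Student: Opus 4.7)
The plan is to adapt the greedy extension strategy of Theorems 2.6 and 2.7 of~\cite{nie2024turan} from the loose-cycle setting to general expansions. Given an $r$-graph $H$ on $n$ vertices with $m\ge Cn^{r-1}$ edges, the goal is to produce a collection $\mathcal{H}$ of copies of $F^{(r)}$ in $H$ satisfying \eqref{equation:Delta} with the prescribed parameters. The construction proceeds in two stages: first I locate many copies of the base $(r-1)$-graph $F$ inside the $(r-1)$-shadow of $H$ by applying the $(M_{r-1},\gamma_{r-1},\tau_{r-1})$-balancedness of $F$ as a black box, and then I greedily inflate each such copy of $F$ into a copy of $F^{(r)}$ in $H$, using the tight-tree spanning hypothesis to ensure the greedy step can always proceed.

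Concretely, with $A=A(n,m)$ as in the statement, I form the $(r-1)$-graph $G$ on $V(H)$ whose edges are the $(r-1)$-subsets $S$ with $\deg_H(S)\ge A$. A dyadic cleaning over the codegree scales $[2^k A,\, 2^{k+1}A]$, combined with the upper bound $A\le m/(M_{r-1}(n)C\log n)$ from condition (d), allows me to pass to a sub-hypergraph $G'\sub G$ of size at least $M_{r-1}(n)$ such that every edge of $G'$ has $H$-codegree in some common window $[B,2B]$ with $B\ge A$. Applying the $(M_{r-1},\gamma_{r-1},\tau_{r-1})$-balancedness of $F$ to $G'$ produces a non-empty collection $\mathcal{F}$ of copies of $F$ in $G'$. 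To lift each $\phi\in\mathcal{F}$ into copies of $F^{(r)}$ in $H$, I fix a tight-tree order $h_1,\ldots,h_{|F|}$ on the edges of the ambient tight $r$-tree containing $F^{(r)}$, and process the edges of $\phi$ in the corresponding order; because each new edge of the tight tree shares $r-1$ vertices with a previous edge, the extension reduces at each step to choosing a single vertex from the $H$-neighbourhood of an $(r-1)$-set of codegree $\ge A$, avoiding the $O_F(1)$ vertices already used. This yields at least $(A/2)^{|F|}$ legal extensions per $\phi\in\mathcal{F}$, and the resulting family $\mathcal{H}$ is non-empty with a clean lower bound of $|\mathcal{H}|\gtrsim |\mathcal{F}|A^{|F|}$.

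The main analytic task is to verify \eqref{equation:Delta} for this $\mathcal{H}$. Given $i\ge 1$ edges of $H$, every copy of $F^{(r)}\in\mathcal{H}$ containing them is determined by (i) a sub-hypergraph $F'\sub F^{(r)}$ encoding how the $i$ edges embed, (ii) the underlying copy of $F$ in $G'$ extending the $(r-1)$-shadows of the $i$ edges, and (iii) the vertices used to complete the remaining edges of $F^{(r)}$. The shadow-level contribution to $\Delta_i(\mathcal{H})$ is controlled by the $(M_{r-1},\gamma_{r-1},\tau_{r-1})$-balancedness of $F$ applied inside $G'$, which (after absorbing the dyadic $\log n$ losses into $C$) produces the second term $\tau_{r-1}(n,\, m/(AC\log n))\cdot C\log n$ in the max defining $\tau_r$. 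The extension-level contribution is controlled by the $r$-density: for any $F'\sub F^{(r)}$ with $i'$ edges, the definition of $d_r(F^{(r)})$ forces $v(F')-r\ge (i'-1)/d_r(F^{(r)})$, so the number of free vertices left to complete $F^{(r)}$ from $F'$ is at most $v(F^{(r)})-r-(i'-1)/d_r(F^{(r)})$; normalising this count against $|\mathcal{H}|\gtrsim |\mathcal{F}|A^{|F|}$ produces exactly the first term $A^{-1/d_r(F^{(r)})}\cdot m$.

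The hard part will be combining these two estimates into the single inequality \eqref{equation:Delta} in a uniform way, since the $i$ shared edges can distribute in many different ways among the edges of $F^{(r)}$ and each sub-hypergraph $F'$ contributes an independent bound, with the worst case being realised at some $F'$ attaining the maximum in $d_r(F^{(r)})$. Keeping track of the $\log n$ factors accumulated through the dyadic decomposition and ensuring that they can all be absorbed into the constant $C$ in the definitions of $M_r,\gamma_r,\tau_r$, while simultaneously verifying that the monotonicity assumptions (b) and (c) on $\gamma_{r-1}$ and $\tau_{r-1}$ propagate cleanly through the shadow/extension interaction, is a routine but delicate bookkeeping exercise that I expect to be the most technical portion of the argument.
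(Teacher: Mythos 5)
Your proposal uses a single, shadow-only pipeline (find copies of $F$ in a high-codegree shadow, then inflate), but the paper's proof is essentially a \emph{two-regime} argument, and the single pipeline cannot reproduce both terms in $\tau_r$.

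Concretely, the paper first passes to an $r$-partite subgraph and invokes a regularization dichotomy (Lemma~\ref{lemma:regularize2}): either (i) there is a large subgraph $\hat H$ all of whose $(r-1)$-shadows have codegree at least $A$, or (ii) there is a large subgraph $\hat H$ and a scale $D\le A$ such that all $(r-1)$-shadows of $\hat H$ in a fixed $(r-1)$-tuple of parts have codegree in $[D/2,D)$. In case (i) it does \emph{not} go through the shadow at all: it applies the greedy tight-tree lemma (Lemma~\ref{lemma:bss_sub_tree}) to build copies of $F^{(r)}$ directly from seed edges of $\hat H$, and that is what produces the first term $A^{-1/d_r(F^{(r)})}m$ of $\tau_r$. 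In case (ii) it applies the shadow-expansion lemma (Lemma~\ref{lemma:building from the shadow}), using that the shadow is large \emph{because} $D\le A$, and that is what produces the second term $\tau_{r-1}(n,m/(A\cdot C\log n))\cdot C\log n$.

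Your construction has two problems corresponding to this. First, you form the $(r-1)$-graph $G$ of all $(r-1)$-sets $S$ with $\deg_H(S)\ge A$ and assert that dyadic cleaning plus $A\le m/(M_{r-1}(n)C\log n)$ yields $|G'|\ge M_{r-1}(n)$. This is not justified: the bound on $A$ controls $m/A$, but there is no a priori lower bound on the number of $(r-1)$-sets with codegree $\ge A$ — it could even be zero. This is precisely the case the paper sends to regime (ii), where the working shadow consists of $(r-1)$-sets with codegree \emph{below} $A$ (so the shadow is forced to be large). Second, your density-counting argument for why the "extension-level contribution" should yield $A^{-1/d_r(F^{(r)})}m$ cannot go through as described: when you first fix a copy of $F$ in the shadow and then inflate, the $\Delta_i$ bounds factor as (shadow $\Delta_i$ of $\mathcal F$) times (per-edge extension counts), giving a bound of the shape in Lemma~\ref{lemma:building from the shadow}, not one involving the density $d_r(F^{(r)})$. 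The density really enters only through the greedy/tight-tree construction in Lemma~\ref{lemma:bss_sub_tree}, which is a different counting argument that never produces an intermediate $\mathcal F$. Without the dichotomy and the separate greedy lemma, the argument neither covers all $H$ nor yields the first term of $\tau_r$.
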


\Cref{lemma:BSviaGreedy} turns out to be essentially at least as strong as \Cref{lemma:BSviaSHADOW} whenever it applies; see the arXiv-only appendix of this paper for more on this. 

By using \Cref{lemma:BSviaGreedy}, we will be able to show that if $F$ is an $r_0$-graph which has ``optimal'' balanced supersaturation (see \Cref{prop:optimalBSBound}) and Tur\'an number $O(n^{r_0-1})$ (i.e.\ one whose extremal construction has the same size as that of an $r_0$-uniform star), then every expansion of $F$ has essentially optimal bounds for the random Tur\'an problem in the following sense.  


\begin{thm}\label{thm:optimalBalanced}
Let $F$ be an $r_0$-graph with $2\le \Delta(F)<|F|$ which is the spanning subgraph of a tight $r_0$-tree. 
 If there exists a constant $C_{r_0}$ such that $F$ is $(M_{r_0},\gamma_{r_0},\tau_{r_0})$-balanced where $M_{r_0}(n)=C_{r_0}n^{r_0-1}$, $\gamma_{r_0}(n)= (\log n)^{C_{r_0}}$ and $\tau_{r_0}(n,m)= n^{r_0-\frac{1}{d_{r_0}(F)}}(\log n)^{C_{r_0}}$; then for all $r\ge r_0$, we have
     $$
    \ex(G^r_{n,p}, F^{(r)})=
    \l\{
    \begin{aligned}
        & \max\{\Theta(pn^{r-1}),n^{r-\frac{1}{d_r(F^{(r)})}}(\log n)^{\Theta(1)}\},~~&\t{if}~p\gg n^{-\frac{1}{d_r(F^{(r)})}}\\
        &(1+o(1))p\binom{n}{r},~~&\t{if}~ n^{-r}\ll p\ll  n^{-\frac{1}{d_r(F^{(r)})}}.\\
    \end{aligned}
    \r.
    $$
\end{thm}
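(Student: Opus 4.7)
The plan is to establish balanced supersaturation of $F^{(r)}$ by induction on $r \ge r_0$, and then apply \Cref{Lemma:General Random Turan} for the upper bound together with standard constructions for the lower bound. The base case $r = r_0$ is directly the hypothesis fed into \Cref{Lemma:General Random Turan}. For the inductive step, the idea is to apply \Cref{lemma:BSviaGreedy} to the $(r-1)$-graph $F^{(r-1)}$; this has two nontrivial requirements that I address in turn: (i) verifying that $F^{(r)}$ is a spanning subgraph of a tight $r$-tree, and (ii) choosing the function $A(n,m)$ correctly so that the resulting $\tau_r$ has the right shape.

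For (i), I would show by a greedy extension argument that if $F$ is a spanning subgraph of a tight $r_0$-tree $T$, then $F^{(r)}$ is a spanning subgraph of a tight $r$-tree for every $r \ge r_0$. The naive expansion $T^{(r)}$ is not a tight $r$-tree, since consecutive edges share only $r_0 - 1$ original vertices, but one can insert auxiliary edges of the form $\{v_1, \ldots, v_{r-1}, x\}$ (where $x$ is a new vertex yet to be introduced and $\{v_1, \ldots, v_{r-1}\}$ lies in an earlier edge) in an order compatible with the tight ordering of $T$, exploiting the extra room created by the expansion. This inductively yields the tight-tree condition at every level $r-1\to r$.

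For (ii), the inductive hypothesis gives $F^{(r-1)}$ is $(M_{r-1},\gamma_{r-1},\tau_{r-1})$-balanced with $M_{r-1}(n)=Cn^{r-2}$ and $\tau_{r-1}(n,m) = n^{(r-1) - 1/d_{r-1}(F^{(r-1)})}(\log n)^{O(1)}$ independent of $m$, so \Cref{lemma:BSviaGreedy} produces a balanced supersaturation of $F^{(r)}$ with
\[\tau_r(n,m) = \max\l\{A(n,m)^{-1/d_r(F^{(r)})}\,m,\ \tau_{r-1}\l(n,\, \tfrac{m}{A(n,m)\cdot C\log n}\r)\cdot C\log n\r\}.\]
I would pick $A(n,m) := (m/n^{r - 1/d_r(F^{(r)})})^{d_r(F^{(r)})}(\log n)^{O(1)}$, which sets the first term equal to $n^{r - 1/d_r(F^{(r)})}(\log n)^{O(1)}$. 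The crucial combinatorial identity to verify is
\[\frac{1}{d_r(F^{(r)})} = 1 + \frac{1}{d_{r-1}(F^{(r-1)})},\]
which follows (for the subgraph achieving the maximum) from $v(F^{(r)}) - r = (v(F^{(r-1)}) - (r-1)) + (|F|-1)$. This identity ensures that the second term in the max also equals $n^{r - 1/d_r(F^{(r)})}(\log n)^{O(1)}$, so both terms balance at the target rate. Admissibility of $A(n,m)$ in the interval $[mn^{1-r},\, m/(M_{r-1}(n)C\log n)]$ of condition (d) follows by a direct calculation using $m \ge Cn^{r-1}$ and $M_{r-1}(n) = Cn^{r-2}$.

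Plugging the resulting balanced supersaturation into \Cref{Lemma:General Random Turan} at $m = M_r(n) = \Theta(n^{r-1})$ yields the upper bound $\max\{\Theta(pn^{r-1}),\, n^{r - 1/d_r(F^{(r)})}(\log n)^{O(1)}\}$ a.a.s. For the lower bound, the $\Omega(pn^{r-1})$ piece comes from intersecting $G^r_{n,p}$ with an $r$-partite or star-like extremal construction of size $\Theta(n^{r-1})$ (available since $\Delta(F)<|F|$ guarantees that $F^{(r)}$ contains two disjoint edges, so a full star is $F^{(r)}$-free), while the $\Omega(n^{r - 1/d_r(F^{(r)})})$ piece follows from a standard random-deletion argument on $G^r_{n,p}$ based on first-moment counts of copies of $F^{(r)}$; in the sparse regime $n^{-r}\ll p\ll n^{-1/d_r(F^{(r)})}$, $G^r_{n,p}$ is itself $F^{(r)}$-free a.a.s.\ by first moment and has $(1+o(1))p\binom{n}{r}$ edges. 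The main obstacle is the inductive verification of the tight-tree extension condition together with the $d_r$-identity above; with those in hand, the choice of $A(n,m)$ is essentially forced and the induction closes cleanly.
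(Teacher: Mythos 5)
Your proposal follows essentially the same route as the paper's proof: an induction on $r$ that lifts the optimal balanced supersaturation of $F^{(r-1)}$ to $F^{(r)}$ via \Cref{lemma:BSviaGreedy}, a tight-tree extension lemma (the paper's \Cref{lem:spanningTreeInduction}, proved by a "goodness" double induction), the density identity $\frac{1}{d_r(F^{(r)})}=1+\frac{1}{d_{r-1}(F^{(r-1)})}$ (the $r_0=r-1$ case of \Cref{lem:densityRelation}), the same choice of $A(n,m)=m^{d_r(F^{(r)})}n^{1-rd_r(F^{(r)})}$ up to log factors, and then \Cref{Lemma:General Random Turan} for the upper bound and the star construction plus deletion for the lower bound (the paper packages these as \Cref{prop:optimalBSExpansions}, \Cref{cor:randomTuranBounds}, and \Cref{lem:generalLowerRandomTuran}). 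One small caveat: you need both directions of the density identity to hold (the maximizer of $d_r(F^{(r)})$ need not a priori be an expansion of the maximizer of $d_{r-1}(F^{(r-1)})$), and in the sparse regime $G^r_{n,p}$ is not a.a.s.\ $F^{(r)}$-free but only has $o(p\binom{n}{r})$ copies to delete; neither affects the overall correctness.
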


This result gives a partial explanation to the peculiar situation mentioned around \Cref{thm:looseCycles}
about  tight random Tur\'an bounds for $C_{2\ell}^{(r)}$ being known only for $r=2$ and $r\ge 4$:\ \Cref{thm:optimalBalanced} shows (roughly) that larger uniformity expansions of $F$ are easier to prove bounds for once the Tur\'an number of these expansions become sufficiently small.

\textbf{Organization}.  The rest of our paper is organized as follows.  We prove Theorems~\ref{lemma:BSviaSHADOW} and \ref{lemma:BSviaGreedy} in \Cref{sec:BS}, which is the bulk of the technical work of this paper.  We then prove several results related to optimal balanced supersaturation in \Cref{sec:optimal} and general random Tur\'an results in \Cref{sec:Turan},  culminating in a proof of \Cref{thm:optimalBalanced}. Finally, we apply our Lifting Theorems  to prove our main results on $\ex(G_{n,p}^r,F^{(r)})$ for various Sidorenko $F$ in \Cref{sec:applying}.
\section{Balanced Supersaturation for Expansions}\label{sec:BS}
In this section we prove our two general results Theorems~\ref{lemma:BSviaSHADOW} and \ref{lemma:BSviaGreedy} about lifting balanced supersaturation from $F$ to the expansions $F^{(r)}$.  For this, given an $r$-graph $H$ we say that a set of vertices $S$ with $|S|=r_0$ is an \textit{$r_0$-shadow} of $H$ if there exists an edge of $H$ containing the set $S$. 
\subsection{Proof Sketch}\label{subsec:sketch}

To illustrate our methods, we informally consider the toy problem of finding copies of $C_4^{(3)}$ in a 3-graph $H$ where every pair of vertices in $H$ has degree either 0 or $d$.  In this setting we use one of two approaches to build our copies of $C_4^{(3)}$.

The first approach grows a $C_4^{(3)}$ from a copy of $C_4$ in the shadow graph  $\partial H$, say a $C_4$ which has vertex set $v_1,v_2,v_3,v_4$.  Because $\{v_1,v_2\}$ has degree $d$ in $H$, there exist at least $d-4$ choices for a vertex $u_{1,2}\notin \{v_1,v_2,v_3,v_4\}$ such that $\{v_1,v_2,u_{1,2}\}$ is an edge in $H$.  Similarly there are at least $d-8$ choices for each of the vertices $u_{2,3},u_{3,4},u_{1,4}$ such that $\{v_i,v_j,u_{i,j}\}$ are edges in $H$ and such that all of these vertices are distinct, and these vertices in total define a copy of $C_4^{(3)}$ in $H$.  With this procedure, each graph $C_4$ in $\partial H$ can be used to build roughly $d^4$ copies of $C_4^{(3)}$ in $H$.  As such, if we can find a collection $\c{H}'$ of copies of $C_4$ in $\partial H$, then we can use this to find a collection $\c{H}$ of copies of $C_4^{(3)}$ in $H$ of size roughly $d^4 |\c{H}'|$, and moreover, it will turn out that $\Del_i(\c{H})$ will be  small provided $\Del_i(\c{H}')$ is, from which we will conclude that balanced supersaturation for $C_4$ leads to balanced supersaturation for $C_4^{(3)}$.

The second approach involves growing a $C_4^{(3)}$ from an arbitrary edge $h=\{v_1,v_2,v_3\}$ in $H$.  Because $v_1,v_3$ has degree $d$, we can choose in $d-3$ ways some $v_4\notin \{v_1,v_2,v_3\}$ which is in an edge with $v_1,v_3$, after which we can perform the exact same expansion procedure as above to build a total of roughly $d^5$ copies of $C_4^{(3)}$ from this edge $h$.  In total this approach gives a collection of $C_4^{(3)}$'s of size about $d^5 |H|$, which will do better than the previous approach depending on the relative sizes of $d$ and $|H|$.  We emphasize here that while our first approach works for essentially any $F$ and $r$, this second approach will only work if the uniformity of $H$ is sufficiently large for the tight tree condition to hold.

We informally refer to the first approach described here as ``Shadow Expansion'' and the second as ``Greedy Expansion''.  We will prove \Cref{lemma:BSviaSHADOW} by using only shadow expansion, and we will prove \Cref{lemma:BSviaGreedy} by combining both of these methods. 

\subsection{Shadow Expansion: Proof of \Cref{lemma:BSviaSHADOW}}
We begin with our main lemma for the Shadow Expansion procedure described above.  Recall that given an $r$-graph $H$, we say that a set of vertices $S$ with $|S|=r_0$ is an \textit{$r_0$-shadow} of $H$ if there exists an edge of $H$ containing the set $S$.

\begin{lem}\label{lemma:building from the shadow}
Let $r>r_0\ge 2$.  Let $F$ be an $r_0$-graph such that  there exist functions $M=M(n)$, $\gamma=\gamma(n)$ and $\tau=\tau(n,m)$ with the property that
\begin{itemize}
    \item[(A)] $F$ is $(M,\gamma,\tau)$-balanced.
\end{itemize}
Let $H$ be an $n$-vertex $r$-partite $r$-graph with $r$-partition $V(H)=V_1\cup\dots\cup V_r$. Let $\hat{n}=|V_1\cup\dots\cup V_{r_0}|$ and let $\hat{m}$ be the number of $r_0$-shadows of $H$ in $V_1\cup\dots\cup V_{r_0}$. If
\begin{itemize}
    \item [(B)] $\hat{n}$ is sufficiently large,
    \item[(C)] $\hat{m}\ge M(\hat{n})$, and if
    \item[(D)] there exist real numbers $d$ and $D$ with $D\ge d\ge 2v(F^{(r)})n^{r-r_0-1}$ such that for every $r_0$-shadow $S$ of $H$ in $V_1\cup\dots\cup V_{r_0}$, 
    $$d\le \deg_{H}(S)\le D,$$
\end{itemize}
then there exists a collection $\c{H}$ of copies of $F^{(r)}$ in $H$ such that for all $ 1\le i\le |F|$,
$$
\Delta_i(\c{H})\le \frac{\gamma(\hat{n})|\c{H}|}{D \hat{m}}\cdot\l(\frac{\tau(\hat{n},\hat{m})}{D\hat{m}}\r)^{i-1}\cdot \l(\frac{2^{r+1}D}{d}\r)^{|F|}.
$$

\end{lem}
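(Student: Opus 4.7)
The plan is to execute the ``Shadow Expansion'' procedure outlined in \Cref{subsec:sketch}. First, I would define the $r_0$-graph $\Hh$ on vertex set $V_1\cup\dots\cup V_{r_0}$ whose edges are precisely the $r_0$-shadows of $H$ that lie in this set, so $\Hh$ has $\hat n$ vertices and $\hat m$ edges. Hypotheses (A)--(C) let me apply the $(M,\gam,\tau)$-balanced property of $F$ to $\Hh$ to produce a collection $\hat{\c H}$ of copies of $F$ in $\Hh$ satisfying
\[\Del_i(\hat{\c H})\le \f{\gam(\hat n)|\hat{\c H}|}{\hat m}\l(\f{\tau(\hat n,\hat m)}{\hat m}\r)^{i-1}\quad\t{for all }1\le i\le |F|.\]

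Next, for each $\hat F\in \hat{\c H}$ with edges $e_1,\dots,e_{|F|}$, I would greedily build copies of $F^{(r)}$ in $H$ by choosing in order an $r$-edge $h_j\supseteq e_j$ of $H$ so that the new vertices $h_j\sm e_j\sub V_{r_0+1}\cup\dots\cup V_r$ are disjoint from those chosen at previous steps. Because $H$ is $r$-partite, at most $|F|(r-r_0)\le v(F^{(r)})$ vertices are ever forbidden, and each forbidden vertex lies in at most $n^{r-r_0-1}$ edges extending $e_j$; hence hypothesis (D) shows that the number of bad $h_j$ is at most $v(F^{(r)})n^{r-r_0-1}\le d/2$. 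Thus each step admits between $d/2$ and $D$ valid choices, so each $\hat F$ contributes between $(d/2)^{|F|}$ and $D^{|F|}$ copies of $F^{(r)}$ to the resulting collection $\c H$.

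The key observation for counting is that the map sending a pair (a copy $\hat F\in\hat{\c H}$, a valid expansion) to the resulting copy of $F^{(r)}\sub H$ is injective, since the $r_0$-shadows of the edges of an $F^{(r)}$-copy recover $\hat F$ and force the pairing with the chosen $r$-edges. This gives $|\hat{\c H}|(d/2)^{|F|}\le |\c H|\le |\hat{\c H}|D^{|F|}$. To bound $\Del_i(\c H)$, fix $i$ edges $h_1',\dots,h_i'\in E(H)$; any element of $\c H$ containing all of them arises from some $\hat F'\in\hat{\c H}$ whose edge set contains the $i$ shadows $e_j':=h_j'\cap(V_1\cup\dots\cup V_{r_0})$ (else the count is zero), and for each such $\hat F'$ the remaining $|F|-i$ edges each admit at most $D$ extensions, yielding
\[\Del_i(\c H)\le \Del_i(\hat{\c H})\cdot D^{|F|-i}.\]
Combining this with $|\hat{\c H}|\le |\c H|(2/d)^{|F|}$ and the balanced bound on $\Del_i(\hat{\c H})$ gives the claimed inequality after short algebra; the factor $2^{r+1}$ in the statement comfortably absorbs the resulting constants.

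The main obstacle is the greedy extension step, which crucially relies on both the $r$-partite structure of $H$ (so that each $r$-edge has a well-defined $r_0$-shadow in $V_1\cup\dots\cup V_{r_0}$) and the quantitatively precise hypothesis (D) (so that the count $v(F^{(r)})n^{r-r_0-1}$ of bad extensions at each step is at most $d/2$). Once the greedy step is verified, the injectivity of the shadow map and the $\Del_i$ computation are routine.
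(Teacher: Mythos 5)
Your proposal is correct and follows essentially the same route as the paper's proof: build the shadow $r_0$-graph $\hat H$, invoke balancedness to get $\hat{\c H}$, greedily expand each $\hat F$ to copies of $F^{(r)}$ using hypothesis (D), and bound $\Del_i(\c H)\le\Del_i(\hat{\c H})D^{|F|-i}$ by recovering the shadow copy from any $i$ edges. Your injectivity observation (that the $r$-partite structure lets you recover $\hat F$ and the chosen expansion edges from the resulting copy of $F^{(r)}$) is in fact slightly sharper than the paper's crude allowance of up to $2^{r|F|}$ preimages per copy, so it comfortably yields the stated inequality.
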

\begin{proof}
Let $H_{[r_0]}$ be the $r_0$-graph on $V_1\cup\dots\cup V_{r_0}$ whose edges are all of the $r_0$-shadows of $H$ in $V_1\cup\dots\cup V_{r_0}$. Note that $v(H_{[r_0]})=\hat{n}$ and $|H_{[r_0]}|=\hat{m}$.  By properties (a), (b) and (c), there exists a collection $\hat{\c{H}}$ of $F$ in $H_{[r_0]}$ such that for all $ 1\le i\le |F|$,
\begin{equation}\label{equation:shadow_InductiveAssumption}
\Delta_i(\hat{\c{H}})\le \frac{\gamma(\hat{n})|\hat{\c{H}}|}{\hat{m}}\cdot\l(\frac{\tau(\hat{n},\hat{m})}{\hat{m}}\r)^{i-1}.    
\end{equation}

We now obtain a collection $\c{H}$ of copies of $F^{(r)}$ in $H$ by ``expanding'' each copy of $F$ in $\hat{\c{H}}$.  More precisely, consider the following procedure for constructing copies of $F^{(r)}$ in $H$. Start by picking some $\hat{F}\in \hat{\c{H}}$ and give an arbitrary ordering $e_1,\ldots,e_m$ to the edges of $\hat{F}$.  We then iteratively for each $1\le k\le m$ pick $h_k\in H$ to be any edge of $H$ which contains $e_k$ and which is disjoint from $(V(\hat{F})\cup \bigcup_{k'<k} h_{k'})\setminus \{e_k\}$, after which the process terminates.  Observe that if this process terminates, then these $h_k$ edges form a copy of $F^{(r)}$ in $H$, and we let $\c{H}$ denote the set of copies of $F^{(r)}$ that can be formed in this way.  It remains to analyze the properties of this collection.

Observe that in the procedure, the number of choices for $h_k$ given the edge $e_k$ is crudely at least \begin{equation}\deg_H(e_k)-v(F^{(r)})n^{r-r_0-1},\label{eq:weakShadowDegreeBound}\end{equation} since $|(V(\hat{F})\cup \bigcup_{k'<k} h_{k'})\setminus \{e_k\}|\le v(F^{(r)})$ and since there are at most $n^{r-r_0-1}$ edges of $H$ containing $e_k$ and any given vertex from $(V(\hat{F})\cup \bigcup_{k'<k} h_{k'})\setminus \{e_k\}$.  Since \eqref{eq:weakShadowDegreeBound} is at least $d/2$ by property (d), we find that the total number of ways of going through the procedure is at least $|\hat{\c{H}}|(d/2)^{|F|}$.  Moreover, since each copy of $F^{(r)}$ can be produced in at most $2^{r|F|}$ ways from this procedure (since there are crudely at most ${r\choose r_0}^{|F|}$ ways of picking an $\hat{F}$ which produces $F^{(r_0)}$), we in total find that

\begin{equation}\label{equation:shadow_LowerboundOfH}
|\c{H}|\ge |\hat{\c{H}}|\l(\frac{d}{2^{r+1}}\r)^{|F|}.
\end{equation}

Now fix some set $\sig\sub H$ of $i$ edges of $H$ and observe that one can specify each copy of $F^{(r)}$ in $\c{H}$ containing $\sig$ through the following two steps: 
\begin{itemize}
    \item[1.] Specify a copy $\hat{F}\in \hat{\c{H}}$ which contains all $i$ of the $[r_0]$-shadows of $\sig$ in $\bigcup_{i\in[r_0]}V_i$. There are at most $\Delta_i(\hat{\c{H}})$ ways to do this.
    \item[2.] For each of the $|F|-i$ other $[r_0]$-shadows of $\hat{F}$ that are not in $\sig$, we specify an edge in $H$ which contains it. By property (d), for each $[r_0]$-shadow there are at most $D$ ways to do this.
\end{itemize}
In total then, we see for all $1\le i\le |F|$ that

\begin{equation}\label{equation:shadow_UpperboundOfDeltaH}
\Delta_i(\c{H})\le \Delta_{i}(\hat{\c{H}})D^{|F|-i}.
\end{equation}

Finally, by (\ref{equation:shadow_UpperboundOfDeltaH}), (\ref{equation:shadow_InductiveAssumption}), and (\ref{equation:shadow_LowerboundOfH}), we have, for all $1\le i\le |F|$,
$$
\begin{aligned}
    \Delta_i(\c{H})&\le \frac{\gamma(\hat{n})|\hat{\c{H}}|}{\hat{m}}\cdot\l(\frac{\tau(\hat{n},\hat{m})}{\hat{m}}\r)^{i-1}\cdot D^{|F|-i}\\
    &\le \frac{\gamma(\hat{n})|\c{H}|}{\hat{m}}\cdot\l(\frac{\tau(\hat{n},\hat{m})}{\hat{m}}\r)^{i-1}\cdot D^{|F|-i}\cdot \l(\frac{d}{2^{r+1}}\r)^{-|F|}\\
    &=\frac{\gamma(\hat{n})|\c{H}|}{D \hat{m}}\cdot\l(\frac{\tau(\hat{n},\hat{m})}{D \hat{m}}\r)^{i-1}\cdot \l(\frac{2^{r+1}D}{d}\r)^{|F|}.
\end{aligned}
$$
\end{proof}

In order to use Lemma~\ref{lemma:building from the shadow}, we will need to work with a hypergraph whose codegrees $\deg_H(\sigma)$ are well controlled.  We do this through the following regularization lemma.
\begin{lem}\label{lem:superregularize}
    If $r\ge 2$ and $H$ is an $n$-vertex $r$-partite $r$-graph on $V_1\cup \cdots \cup V_r$ with $n$ sufficiently large in terms of $r$, then there exists a subgraph $H'\sub H$ without isolated vertex and a real-valued vector $\Delta$ indexed by $2^{[r]}$ such that $|H'|\ge  (r\log n)^{-2^r}|H|/2$ and for any set $I\sub [r]$, every $|I|$-shadow $S$ of $H'$ in $\bigcup_{i\in I} V_i$ satisfies 
    $$
    \Delta_I (2r\log n)^{-2^r} \le \deg_{H'}(S)\le \Delta_I.
    $$

    Moreover, let $n'=|V(H')|$ and let $\partial_I$ denote the number of $|I|$-shadows of $H'$ in $\bigcup_{i\in I} V_i$. It is possible to relabel the indices of $V_1,\dots,V_r$ such that for any $1\le k\le r$,
    $$
    \partial_{[k]}\ge (n')^{\frac{r-k}{r-1}}|H'|^{\frac{k-1}{r-1}}(2r\log n)^{-k2^r},
    $$
    and
    $$
    \Delta_{[k]}\le \l(\frac{|H'|}{n'}\r)^{\frac{r-k}{r-1}}\l(2r(\log n)^{k+1}\r)^{2^r}.
    $$
\end{lem}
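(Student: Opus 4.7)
The plan is to establish the first half via iterated dyadic pigeonhole followed by a trimming step, and to obtain the ``moreover'' via a greedy entropic relabeling.

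For the first half, I would assign to each edge $e\in H$ a \emph{type vector} $(t_I(e))_{I\subseteq[r]}$ defined by $t_I(e):=\lfloor\log_2\deg_H(e\cap\bigcup_{i\in I}V_i)\rfloor$. Since each $t_I(e)$ takes at most $\lceil r\log_2 n\rceil+1$ values, there are at most $(r\log n)^{2^r}$ possible type vectors, and pigeonhole yields a single type class $H_0\subseteq H$ with $|H_0|\ge|H|/(r\log n)^{2^r}$. Setting $\Delta_I:=2^{t_I+1}$ for the common type, every $|I|$-shadow $S$ of $H_0$ satisfies $\deg_H(S)\in[\Delta_I/2,\Delta_I]$, which in particular gives $\deg_{H_0}(S)\le\Delta_I$. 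To install the matching lower bound, I would then iteratively (over $I\subseteq[r]$) remove from the current subgraph every $|I|$-shadow whose degree has fallen below $\Delta_I/(2r\log n)^{2^r}$. Since $\sum_S\deg_H(S)=|H|$ and each surviving shadow has $\deg_H(S)\ge\Delta_I/2$, there are at most $2|H|/\Delta_I$ shadows to consider per round, so each round removes at most $2|H|/(2r\log n)^{2^r}$ edges. Summing over the $2^r$ subsets makes the total loss much smaller than $|H_0|/2$ for $n$ large, and discarding newly isolated vertices yields the desired $H'$.

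For the moreover part, observe first that the first-half regularization gives $\Delta_I\cdot\partial_I\in[|H'|/(2r\log n)^{2^r},|H'|]$, so the bounds on $\partial_{[k]}$ and on $\Delta_{[k]}$ are equivalent up to the stated polylogarithmic factors; it suffices to lower-bound $\partial_{\pi([k])}$. Let $X=(X_1,\ldots,X_r)$ be a uniformly random edge of $H'$ and work with Shannon entropy, using the standard bound $\log\partial_{\pi([k])}\ge H(X_{\pi(1)},\ldots,X_{\pi(k)})$. Choose $\pi(1)$ to be an index maximizing $|V_i|$, so $|V_{\pi(1)}|\ge n'/r$; since the first half forces degrees on $V_{\pi(1)}$ to be near-uniform, $H(X_{\pi(1)})\ge\log n'-\text{polylog}$. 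For $k\ge 2$, set $\pi(k):=\arg\max_{i\notin\pi(<k)}H(X_i\mid X_{\pi(<k)})$. Writing $h_k=H(X_{\pi(k)}\mid X_{\pi(<k)})$ and $s_k=\sum_{j\le k}h_j$, the bound ``max $\ge$ average,'' combined with subadditivity applied to the $r-k+1$ remaining coordinates, gives $h_k\ge(\log|H'|-s_{k-1})/(r-k+1)$. This rearranges to the recursion $\log|H'|-s_k\le\frac{r-k}{r-k+1}(\log|H'|-s_{k-1})$, and unrolling from $k=1$ with $s_1\ge\log n'-\text{polylog}$ yields $s_k\ge\frac{k-1}{r-1}\log|H'|+\frac{r-k}{r-1}\log n'-\text{polylog}$, exactly the required bound on $\log\partial_{\pi([k])}$.

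The main obstacle is a careful quantitative accounting: the trimming loss across all $2^r$ iterations must be much smaller than $|H_0|/2=|H|/(2(r\log n)^{2^r})$, which already is a tiny fraction of $|H|$; this forces the trimming threshold to be the stated $\Delta_I/(2r\log n)^{2^r}$ and is the most delicate part of the first half. A conceptual subtlety in the relabeling is that the greedy entropic recursion alone only reproduces the Han-type bound $\partial_{\pi([k])}\ge|H'|^{k/r}$; the crucial $(n')^{(r-k)/(r-1)}$ factor in the claim enters only through the initial boost $h_1\ge\log n'-\text{polylog}$, which requires both the maximal choice of $\pi(1)$ and the degree-uniformity supplied by the first half.
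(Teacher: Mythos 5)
Your first half is essentially the paper's argument verbatim: dyadic type vectors, pigeonhole to a common type class $H_0$, iterative trimming of low-degree shadows, and the identical $2|H|/\Delta_I$ accounting. One phrasing nit: you should iterate the trimming until \emph{no} violating shadow remains across all $I$ simultaneously (not just one pass per $I$), but the charging argument is unaffected since each shadow is killed at most once; and ``much smaller than $|H_0|/2$'' is in fact tight for $r=2$ (one needs $2^{r+2}\le 2^{2^r}$, with equality at $r=2$), but the inequality does hold.

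Your ``moreover'' argument is a genuinely different route from the paper's. The paper proves the lower bound on $\partial_{[k]}$ by an induction-with-contradiction: assuming all $\partial_{[k-1]\cup\{j\}}$ are small, it deduces a lower bound on $\Delta_{[k]}$, shows via a combinatorial extension claim that each $(k-1)$-shadow has many one-vertex extensions into $V_{k+1}\cup\cdots\cup V_r$, and derives a contradiction by pigeonhole. You instead take a uniformly random edge $X$, greedily order the parts by maximal conditional entropy, and use the chain rule plus subadditivity over the $r-k+1$ remaining coordinates to derive the recursion $h_k\ge(\log|H'|-s_{k-1})/(r-k+1)$, which telescopes exactly to the claimed exponents. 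Both arguments consume the same raw material: the relation $\partial_I\Delta_I(2r\log n)^{-2^r}\le|H'|\le\partial_I\Delta_I$ from the first half, the pigeonhole base case $\partial_{\{1\}}\ge n'/r$, and the near-uniformity of degrees. Your observation that the pure entropic recursion alone only yields the Han-type bound $|H'|^{k/r}$, and that the $(n')^{(r-k)/(r-1)}$ factor enters only through the base-step boost $h_1\ge\log n'-\mathrm{polylog}$ (which requires both the maximal choice of $\pi(1)$ and the degree-uniformity), is the right conceptual diagnosis; the paper's version of this boost is the analogous use of $\partial_{\{1\}}\ge n'/r$ as the base of its induction. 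The entropy route is arguably cleaner and avoids the contradiction; the paper's route is more elementary. Both give polylog factors comfortably within what the lemma asserts, though one should separately handle $k=1$ directly from pigeonhole (as you implicitly do) rather than from the recursion, since the recursion gives a factor $r^{-(r-k)/(r-1)}$ that is not absorbed by $(2r\log n)^{-k2^r}$ when $k=1$ and $n$ is fixed; for large $n$ this is absorbed anyway.
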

\begin{proof}
    Given an integral vector $y$ indexed by $2^{[r]}$, we say that an edge $e\in H$ is of \textit{type $y$} if for every set $I\sub [r]$, we have $2^{y_I-1}\le \deg_H(e\cap \bigcup_{i\in I} V_i)< 2^{y_I}$.  By the pigeonhole principle, there exists some $y\in [\log(n^r)]^{2^r}$ such that at least $\log(n^r)^{-2^r}|H| $ edges of $H$ are of type $y$.  Let $H_0$ be the collection of edges of type $y$, and let $\Delta$ be the vector with $\Delta_I:=2^{y_I}$.  
    
    Iteratively, given $H_0$, if there exists some set $I\sub [r]$ and $S\sub \bigcup_{i\in I} V_i$ with $|S|=|I|$ such that $0<\deg_{H_0}(S)< 2^{-2^{r}}\log(n^r)^{-2^r}\Delta_I$, then delete every edge from $H_0$ containing $S$.  Let $H'$ be the resulting hypergraph, noting that it satisfies the desired degree conditions by construction (since in particular, $\deg_{H'}(S)\le \deg_{H_0}(S)\le \Delta_I$ for any $S\sub \bigcup_{i\in I}V_i$).  Further, note  that for any set $I\sub [r]$, the number of sets $S\sub \bigcup_{i\in I} V_i$ of size $|I|$ with positive degree in $H_0$ is at most $2 |H|/\Delta_I$ (since any set with positive degree in $H_0$ has degree at least $\Delta_I/2$ in $H$ by definition of $H_0$), so the total number of edges deleted from $H_0$ to obtain $H'$ is at most 
    \[2^{-2^r}  \log(n^r)^{-2^r} \Delta_I\cdot 2|H|/\Delta_I= 2^{-2^r+1} \log(n^r)^{-2^r} |H|.\]  
    Summing this bound over all $I\sub [r]$ gives
    \[|H'|\ge |H_0|- 2^r\cdot 2^{-2^r+1} \log(n^r)^{-2^r} |H|\ge \log(n^r)^{-2^r} |H|- \frac{1}{2} \log(n^r)^{-2^r} |H|\ge \frac{1}{2} \log(n^r)^{-2^r} |H|,\]
    giving the desired lower bound on $|H'|$.  It thus remains to prove the ``moreover'' part of the statement.  For this we make use of the inequalities
    \begin{equation}\partial_I\Del_I(2r\log n)^{-2^r} \le |H'|\le \partial_I\Del_I,\label{eq:basic}\end{equation}
   which follow from $|H'|=\sum_{S\sub \bigcup_{i\in I}V_i} \deg_{H'}(S)$ for any $I$ and our bounds for these degrees .

    We may assume without loss of generality that $H'$ has no isolated vertices, as deleting these do not change any of the previously mentioned properties.  We will show by induction on $k$ that, upon relabeling indices of $V_i$, we have for all $1\le k\le r$ that
    $$
    \partial_{[k]}\ge (n')^{\frac{r-k}{r-1}}|H'|^{\frac{k-1}{r-1}}(2r\log n)^{-k2^r}.
    $$
    
    By the Pigeonhole Principle, we can assume without loss of generality that $\partial_{\{1\}}=|V(H')\cap V_1|\ge n'/r$. Let $2\le k\le r-1$ and suppose we have inductively shown
    \begin{equation}\label{eq:inductivePartial}
    \partial_{[k-1]}\ge (n')^{\frac{r-k+1}{r-1}}|H'|^{\frac{k-2}{r-1}}(2r\log n)^{-(k-1)2^r}.
    \end{equation}
    Assume for contradiction that $\partial_{[k-1]\cup\{j\}}< (n')^{\frac{r-k}{r-1}}|H'|^{\frac{k-1}{r-1}}(2r\log n)^{-k2^r}$ for every $k\le j \le r$.  In particular, this and \eqref{eq:basic} implies 
    \begin{equation}\label{eq:DelLower}
    \Delta_{[k]}\ge\frac{|H'|}{\partial_{[k]}}\ge\l(\frac{|H'|}{n'}\r)^{\frac{r-k}{r-1}}(2r\log n)^{k2^r}.
    \end{equation}
    \begin{claim}
        For every $k$-shadow $\{v_1,\ldots,v_k\}$ of $H'$ with $v_i\in V_i$ for all $i$, the number of vertices $u$ such that $\{v_1,\ldots,v_k,u\}$ is a $(k+1)$-shadow of $H'$ is at least
        \[\l(\frac{\Delta_{[k]}}{(2r\log n)^{2^r}}\r)^{\frac{1}{r-k}}.\]
    \end{claim}
    \begin{proof}
        Let $U$ denote the set of vertices $u$ as described in the claim.  Observe that every edge containing $\{v_1,\ldots,v_k\}$ in $H'$ consists of itself together with $r-k$ vertices of $U$, hence
        \[\deg_{H'}(v_1,\ldots,v_k)\le {|U|\choose r-k}\le |U|^{r-k},\]
        and rearranging gives \[|U|\ge \deg_{H'}(v_1,\ldots,v_k)^{\frac{1}{r-k}}\ge \left(\frac{\Del_{[k]}}{(2r \log n)^{2^r}}\right)^{\frac{1}{r-k}},\] with this last step using our lower bound for $I$-degrees that we verified in the first half of the proof.
    \end{proof}
    Observe that if $\{v_1,\ldots,v_k\}$ is a $k$-shadow of $H'$ as in the claim and $\{v_1,\ldots,v_k,u\}$ is a $(k+1)$-shadow, then $\{v_1,\ldots,v_{k-1},u\}$ is a $k$-shadow of $H'$ contained in $V_1\cup \cdots \cup V_{k-1}\cup V_j$ for some $k+1\le j\le r$.  Hence by the claim together with \eqref{eq:inductivePartial} and \eqref{eq:DelLower} we have
    
    \begin{align*}
    \sum^r_{j=k+1}\partial_{[k-1]\cup\{j\}}&\ge \partial_{[k-1]}\l(\frac{\Delta_{[k]}}{(2r\log n)^{2^r}}\r)^{\frac{1}{r-k}}\ge (n')^{\frac{r-k}{r-1}}|H'|^{\frac{k-1}{r-1}}(2r\log n)^{-(k-1)2^r-\frac{1}{r-k}2^r} \\ &\ge (r-k)(n')^{\frac{r-k}{r-1}}|H'|^{\frac{k-1}{r-1}}(2r\log n)^{-k2^r},
    \end{align*}
    where this last inequality holds with equality if $k=r-1$ and otherwise holds for $n$ sufficiently large (since the exponent for the logarithmic factor is strictly more than $-k 2^r$ in this case).
    
    By the Pigeonhole Principle, we conclude that for some $k+1\le j\le r$, 
    $$
   \partial_{[k-1]\cup\{j\}}\ge (n')^{\frac{r-k}{r-1}}|H'|^{\frac{k-1}{r-1}}(\log n)^{-k2^r}.
    $$
    This contradicts our assumption that $\partial_{[k-1]\cup\{j\}}< n'^{\frac{r-k}{r-1}}|H'|^{\frac{k-1}{r-1}}(2r\log n)^{-k2^r}$ for each $k\le j \le r$. Hence, by relabeling indices if necessary, we have
    $$
    \partial_{[k]}\ge (n')^{\frac{r-k}{r-1}}|H'|^{\frac{k-1}{r-1}}(2r\log n)^{-k2^r},
    $$
    completing our inductive lower bound for $\partial_{[k]}$.  
    Finally, by \eqref{eq:basic}  we have  for all $1\le k\le r$ that
    $$
    \Delta_{[k]}\le \frac{(2r\log n)^{2^r}|H'|}{\partial_{[k]}}\le \l(\frac{|H'|}{n'}\r)^{\frac{r-k}{r-1}}\l(2r(\log n)^{k+1}\r)^{2^r}.
    $$
\end{proof}

We now have all the tools we need to prove our first main result \Cref{lemma:BSviaSHADOW}, which we restate below for convenience. 

\begingroup
\def\thethm{\ref{lemma:BSviaSHADOW}}
\begin{thm}
Let $r>r_0\ge 2$ and let $F$ be an $r_0$-graph such that there exist positive functions $M_{r_0}=M_{r_0}(n)$, $\gamma_{r_0}=\gamma_{r_0}(n)$ and $\tau_{r_0}=\tau_{r_0}(n,m)$ with the following properties:
\begin{itemize}
    \item [(a)] $F$ is $(M_{r_0}, \gamma_{r_0}, \tau_{r_0})$-balanced,
    \item[(b)] For sufficiently large $n$, the function $M_{r_0}(n)n^{-\frac{r-r_0}{r-1}}$ is non-decreasing with respect to $n$,
    \item[(c)] For sufficiently large $n$, the function  $\gamma_{r_0}(n)$ is non-decreasing with respect to $n$.
    \item [(d)] For sufficiently large $n$,  $m\ge M_{r_0}(n)$, and any real $x\ge 1$, the function $\tau_{r_0}(n,m)$ is non-increasing with respect to $m$ and $\tau_{r_0}(nx,mx^{\frac{r-r_0}{r-1}})$ is non-decreasing with respect to $x$.
\end{itemize}
Then there exists a sufficiently large constant $C$ depending only on $F$ and $r$ such that for the functions
\begin{equation*}
    M_r(n):=\max\l\{M_{r_0}(n)^{\frac{r-1}{r_0-1}}n^{-\frac{r-r_0}{r_0-1}},~n^{r-1}\r\}(\log n)^{C},
\end{equation*}

\begin{equation*}
\gamma_r(n):=\gamma_{r_0}(n)(\log n)^{C},
\end{equation*}
and
\begin{equation*}
\tau_r(n,m):=\tau_{r_0}\l(n,n^{\frac{r-r_0}{r-1}}m^{\frac{r_0-1}{r-1}}(\log n)^{-C}\r)(\log n)^{C},
\end{equation*}
we have that $F^{(r)}$ is $(M_r,\gamma_r,\tau_r)$-balanced.
\end{thm}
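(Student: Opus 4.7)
The plan is to combine the Shadow Expansion lemma (\Cref{lemma:building from the shadow}) with the regularization lemma (\Cref{lem:superregularize}). Given an $n$-vertex $r$-graph $H$ with $m\ge M_r(n)$ edges, I would first pass to an $r$-partite subgraph $H_1\sub H$ with $|H_1|\ge r!r^{-r}m$ edges via a random partition of $V(H)$ into $r$ parts.  Then apply \Cref{lem:superregularize} to $H_1$, producing a subgraph $H'\sub H_1$ without isolated vertices with $|H'|\ge m(r\log n)^{-O(1)}$, with controlled codegrees (every $|I|$-shadow in $\bigcup_{i\in I}V_i$ has degree within a polylog factor of some $\Del_I$), and with shadow count $\partial_{[r_0]}\ge (n')^{(r-r_0)/(r-1)}|H'|^{(r_0-1)/(r-1)}(\log n)^{-O(1)}$ after a suitable relabeling of indices, where $n'=|V(H')|$.

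Next, apply \Cref{lemma:building from the shadow} to $H'$ with $D:=\Del_{[r_0]}$ and $d:=\Del_{[r_0]}(2r\log n)^{-2^r}$, so that $D/d$ is polylogarithmic.  The codegree hypothesis $d\ge 2v(F^{(r)})n^{r-r_0-1}$ follows from the $n^{r-1}(\log n)^C$ clause in $M_r$ together with the basic inequality $\Del_{[r_0]}\partial_{[r_0]}\ge |H'|(\log n)^{-O(1)}$ (implicit in the proof of \Cref{lem:superregularize}) and the crude bound $\partial_{[r_0]}\le n^{r_0}$.  The hypothesis $\hat{m}\ge M_{r_0}(\hat{n})$ follows from the other clause of $M_r$ together with assumption (b), which lets one pass from $\hat{n}\le n$ to the required inequality via the monotonicity of $M_{r_0}(n)n^{-(r-r_0)/(r-1)}$.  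This produces a collection $\c{H}$ of copies of $F^{(r)}$ in $H$ satisfying the codegree bound stated in that lemma.

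The remaining work is to translate the resulting codegree bound into $(M_r,\gamma_r,\tau_r)$-balancedness.  The factor $(2^{r+1}D/d)^{|F|}$ contributes only a polylogarithmic amount that is absorbed into $\gamma_r(n)=\gamma_{r_0}(n)(\log n)^C$ (also using assumption (c) to replace $\gamma_{r_0}(\hat{n})$ by $\gamma_{r_0}(n)$), and the quantity $D\hat{m}$ is within a polylog factor of $m$ via the same $\Del_I\partial_I$ identity.  The most delicate point, which I expect to be the main obstacle, is controlling $\tau_{r_0}(\hat{n},\hat{m})$: since $\hat{n}$ may be strictly less than $n$, I would apply the joint-scaling monotonicity from assumption (d) with $x=n/\hat{n}\ge 1$ to obtain
\[
\tau_{r_0}(\hat{n},\hat{m})\le \tau_{r_0}\l(n,\,\hat{m}(n/\hat{n})^{(r-r_0)/(r-1)}\r),
\]
and then use the lower bound $\hat{m}(n/\hat{n})^{(r-r_0)/(r-1)}\ge n^{(r-r_0)/(r-1)}m^{(r_0-1)/(r-1)}(\log n)^{-O(1)}$ together with the non-increasing property of $\tau_{r_0}$ in its second argument to conclude $\tau_{r_0}(\hat{n},\hat{m})\le \tau_r(n,m)(\log n)^{-C}$, provided $C$ is chosen large enough to dominate all accumulated polylog factors.
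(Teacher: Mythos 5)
Your proposal follows the paper's proof essentially step for step: pass to a maximum $r$-partite subgraph, apply the regularization lemma (\Cref{lem:superregularize}), feed the resulting well-regularized subgraph into the Shadow Expansion lemma (\Cref{lemma:building from the shadow}) with $D=\Delta_{[r_0]}$ and $d=\Delta_{[r_0]}(2r\log n)^{-2^r}$, verify the degree threshold via the $n^{r-1}(\log n)^C$ clause of $M_r$ and the shadow lower bound via the other clause together with (b), and clean up with (c) and both parts of (d). The only cosmetic difference is the order in which you invoke the two halves of (d) when bounding $\tau_{r_0}(\hat n,\hat m)$ (you scale in $n$ first, the paper shrinks $\hat m$ first), which yields the same conclusion.
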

\addtocounter{thm}{-1}
\endgroup

\begin{proof}
Let $H$ be an $n$-vertex $m$-edge $r$-graph where $n$ is sufficiently large and $m\ge M_r(n)$. Let $H'$ be a maximum $r$-partite subgraph of $H$, noting that $|H'|\ge \frac{r\,!}{r^r}\cdot m$ by considering a random $r$-partition of $H$. By Lemma~\ref{lem:superregularize} there exists a vector $\Delta$ and a subgraph $H''$ of $H'$ on $n''$ vertices with $r$-partition $V_1\cup\dots\cup V_r$ such that 
\begin{equation}\label{eq:LowerBoundOfH''}
|H''|\ge 2^{-1}(r\log n)^{-2^r}|H'|\ge\frac{r\,!\,m}{2r^r(r\log n)^{2^r}}\ge\frac{m}{(\log n)^{2^r+1}}, 
\end{equation}
with this last bound holding for $n$ sufficiently large; and moreover, every $r_0$-shadow $S$ of $H''$ in $\bigcup_{i\in [r_0]} V_i$ satisfies 
\begin{equation}\label{equation:codegree}
    (2r\log n)^{-2^r} \Delta_{[r_0]}\le \deg_{H''}(S)\le \Delta_{[r_0]},
\end{equation}
where
\begin{equation}\label{equation:UpperBoundOfDeltar0}
    \Delta_{[r_0]}\le \l(\frac{|H''|}{n''}\r)^{\frac{r-r_0}{r-1}}\l(2r(\log n)^{r_0+1}\r)^{2^r}\le \l(\frac{|H''|}{n''}\r)^{\frac{r-r_0}{r-1}}(\log n)^{r_02^{r+1}}.
\end{equation}

Let $H_{[r_0]}$ be the $r_0$-graph on $\bigcup_{i\in[r_0]}V_i$ whose edges are all $r_0$-shadows of edges of $H''$ in $\bigcup_{i\in[r_0]}V_i$.  Let $\hat{n}=|\bigcup_{i\in[r_0]}V_i|$ and let $\hat{m}=|H_{[r_0]}|$.  Our aim now will be to apply \Cref{lemma:building from the shadow} to $H_{[r_0]}$, and to this end we need to effectively lower bound $\hat{m}$ and, in view of \eqref{equation:codegree}, we will also need to show that $d:=\Del_{[r_0]}(2r\log n)^{-2^r}$ is at least $2v(F^{(r)}) n^{r-r_0-1}$.   Towards both of these goals, we observe by (\ref{equation:codegree}) that
\begin{equation}\label{equation:UpperBoundOfH''}
|H''|\le\Delta_{[r_0]}\hat{m}.
\end{equation}

By (\ref{equation:UpperBoundOfH''}) and (\ref{eq:LowerBoundOfH''}), we have
\begin{equation*}
\Delta_{[r_0]}\ge\frac{|H''|}{\hat{m}}\ge \frac{m}{\hat{m}(\log n)^{2^r+1}}.
\end{equation*}
Further, using the trivial bounds $\hat{m}\le n^{r_0}$ and $m\ge M_r(n)\ge n^{r-1}(\log n)^{2^{r+1}+2}$ for $C$ sufficiently large, we have
\begin{equation}\label{eq:Delta_r0}
(2r\log n)^{-2^r}\Delta_{[r_0]}\ge 2v(F^{(r)})n^{r-r_0-1}.
\end{equation}

Turning to $\hat{m}$, we have by (\ref{equation:UpperBoundOfH''}), (\ref{equation:UpperBoundOfDeltar0}), and (\ref{eq:LowerBoundOfH''}) that 
\begin{equation}\label{eq:partial}
    \hat{m}\ge\frac{|H''|}{\Delta_{[r_0]}}\ge (n'')^{\frac{r-r_0}{r-1}}|H''|^{\frac{r_0-1}{r-1}}(\log n)^{-r_02^{r+1}}\ge (n'')^{\frac{r-r_0}{r-1}}\l(\frac{m}{(\log n)^{2^r+1}}\r)^{\frac{r_0-1}{r-1}}(\log n)^{-r_02^r}.
\end{equation}

Using (\ref{eq:partial}) together with $m\ge M_r(n)$ and property (b), we have for $C$ sufficiently large that
$$
\begin{aligned}
\hat{m}&\ge (n'')^{\frac{r-r_0}{r-1}}\l(\frac{M_{r_0}(n)^{\frac{r-1}{r_0-1}}n^{-\frac{r-r_0}{r_0-1}}(\log n)^{C}}{(\log n)^{2^r+1}}\r)^{\frac{r_0-1}{r-1}}(\log n)^{-r_02^r}\\
&\ge (n'')^{\frac{r-r_0}{r-1}}\l(M_{r_0}(n'')^{\frac{r-1}{r_0-1}}(n'')^{-\frac{r-r_0}{r_0-1}}\r)^{\frac{r_0-1}{r-1}}\\
&=M_{r_0}(n'')\\
&\ge M_{r_0}(\hat{n}).
\end{aligned}
$$

Observe that we may assume $\hat{n}$ is sufficiently large.  Indeed, we assume in the statement that $n$ is sufficiently large, which combined with $m\ge M_r(n)\ge n^{r-1}$ and \eqref{eq:partial} implies $\hat{m}=|H_{[r_0]}|$ is sufficiently large, from which it follows that $\hat{n}=v(H_{[r_0]})$ is sufficiently large. Using this  and property (a), we may apply Lemma~\ref{lemma:building from the shadow} to $H''$, giving a collection $\c{H}$ of copies of $F^{(r)}$ such that for all $ 1\le i\le |F|$,
\begin{equation}
    \Delta_i(\c{H})\le \frac{\gamma_{r_0}(\hat{n})|\c{H}|}{\Delta_{r_0}\hat{m}}\cdot\l(\frac{\tau_{r_0}(\hat{n},\hat{m})}{\Delta_{r_0}\hat{m}}\r)^{i-1}\l(2^{r+1}(2r\log n)^{2^r}\r)^{|F|}.
\end{equation}

Using this with (\ref{eq:LowerBoundOfH''}) and (\ref{equation:UpperBoundOfH''}) gives,

\begin{equation}\label{eq:UpperboundOfDelta}
\begin{aligned}
    \Delta_i(\c{H})&\le\frac{\l(2^{r+1}(2r\log n)^{2^r}\r)^{|F|}\cdot\gamma_{r_0}(\hat{n})|\c{H}|}{|H''|}\cdot\l(\frac{\tau_{r_0}(\hat{n},\hat{m})}{|H''|}\r)^{i-1}\\
    &\le\frac{2r^r(r\log n)^{2^r}}{r\,!}\cdot\frac{\l(2^{r+1}(2r\log n)^{2^r}\r)^{|F|}\cdot\gamma_{r_0}(n)|\c{H}|}{m}\cdot\l(\frac{2r^r(r\log n)^{2^r}}{r\,!}\cdot\frac{\tau_{r_0}(\hat{n},\hat{m})}{m}\r)^{i-1}.
\end{aligned}
\end{equation}
Note that, by both parts of property (d) and by ({\ref{eq:partial}}),
\begin{equation}\label{eq:tau}
\begin{aligned}
    \tau_{r_0}(\hat{n},\hat{m})&\le \tau_{r_0}\l(\hat{n}, \hat{n}^{\frac{r-r_0}{r-1}}\l(\frac{r\,!\,m}{2r^r(r\log n)^{2^r}}\r)^{\frac{r_0-1}{r-1}}(\log n)^{-r_02^r}\r)\\
    &\le  \tau_{r_0}\l(n, n^{\frac{r-r_0}{r-1}}\l(\frac{r\,!\,m}{2r^r(r\log n)^{2^r}}\r)^{\frac{r_0-1}{r-1}}(\log n)^{-r_02^r}\r).
\end{aligned}
\end{equation}

Finally, by property (c), the definition of $\gamma_r$ and $\tau_r$, and (\ref{eq:UpperboundOfDelta}) and (\ref{eq:tau}), we conclude that for all $1\le i\le |F|$,
$$
\Delta_i(\c{H})\le \frac{\gamma_r(n)|\c{H}|}{m}\cdot\l(\frac{\tau_r(n,m)}{m}\r)^{i-1}.
$$
Therefore, $F^{(r)}$ is $(M_r,\gamma_r,\tau_r)$-balanced for $n$ sufficiently large by definition.
\end{proof}

\subsection{Greedy Expansion: Proof of \Cref{lemma:BSviaGreedy}}
We now prove our second result \Cref{lemma:BSviaGreedy} which lifts balancedness from $(r-1)$-graphs $F$ to $F^{(r)}$ whenever $F^{(r)}$ is contained in a tight $r$-tree.  We begin by introducing two  lemmas that will be useful. The following says that every $r$-partite $r$-graph contains a large subgraph such that either all $(r-1)$-shadows have large codegrees or all $(r-1)$-shadows in some $(r-1)$ parts have small (but not too small) and regular codegrees.

\begin{lem}\label{lemma:regularize2}
    For $r\ge 2$, let $H$ be an $r$-partite $r$-graph on $n$ vertices, then for any $|H|/(4n^{r-1})< A\le n$, one of the following two statements is true: 
    \begin{itemize}
        \item [(i)] There exists a subgraph $\hat{H}\subseteq H$ with $|\hat{H}|\ge |H|/2$ such that every $(r-1)$-shadow of $\hat{H}$ has codegree at least $A$.
        \item[(ii)] There  exist a subgraph $\hat{H}\subseteq H$ with $|\hat{H}|\ge |H|/(4r\log n)$, an $r$-partition $V(\hat{H})=V_1\cup\dots\cup V_r$, and
        $$
        \frac{|H|}{4n^{r-1}}<D\le A
        $$
        such that any $(r-1)$-shadow $\sigma$ of $\hat{H}$ in $V_1\cup\dots\cup V_{r-1}$ satisfies 
        $$
        D/2\le \deg_{\hat{H}}(\sigma)<D.
        $$

    \end{itemize}
\end{lem}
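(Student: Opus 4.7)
The plan is to attempt case (i) via a greedy cleaning procedure, and if that fails, to extract case (ii) from the deleted edges via a dyadic pigeonhole.

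For case (i), iteratively, while some $(r-1)$-shadow $\sigma$ of the current subgraph satisfies $0<\deg(\sigma)<A$, remove all edges containing $\sigma$; let $\hat H_1$ be the resulting subgraph. By construction every positive-degree $(r-1)$-shadow of $\hat H_1$ has codegree at least $A$, so if $|\hat H_1|\ge |H|/2$ we obtain case (i). Otherwise more than $|H|/2$ edges are deleted, and these will yield case (ii). For each deleted edge $e$, let $\sigma_e$ denote the \emph{witness} shadow being processed at the step when $e$ was removed, so $\sigma_e\subset e$ and the degree $d_e$ of $\sigma_e$ at that moment lies in $[1,A)$. Since $H$ is $r$-partite, $\sigma_e$ omits a unique part $V_{i(e)}$ with $i(e)\in[r]$.

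Since each shadow is processed at most once and an $r$-partite $r$-graph on $n$ vertices has at most $n^{r-1}$ distinct $(r-1)$-shadows, the ``tiny'' deletions (those with $d_e<|H|/(4n^{r-1})$) number at most $n^{r-1}\cdot|H|/(4n^{r-1})=|H|/4$; hence at least $|H|/4$ deletions are non-tiny. I would then partition the non-tiny deleted edges into buckets by the dyadic range $[D/2,D)$ of $d_e$ (for $D\in\{A/2^j\}$ above the threshold $|H|/(4n^{r-1})$, giving $O(r\log n)$ levels since $A\le n$) and by the index $i(e)\in[r]$, and apply pigeonhole to find a bucket with at least $\Omega(|H|/(r^2\log n))$ edges. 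After relabeling parts so this bucket has $i(e)=r$, let $\hat H$ denote the edges in this bucket and $D$ the corresponding dyadic value.

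To verify case (ii) for this $\hat H$, observe that any $(r-1)$-shadow $\sigma$ of $\hat H$ in $V_1\cup\cdots\cup V_{r-1}$ is the forward shadow (missing $V_r$) of some edge of $\hat H$. Since $H$ is $r$-partite, each edge has a unique forward shadow, which for edges of $\hat H$ coincides with the witness $\sigma_e$; moreover any edge of $\hat H$ containing $\sigma$ must have $\sigma$ as its forward shadow. Therefore the edges of $\hat H$ containing $\sigma$ are precisely the $d_\sigma$ edges simultaneously deleted when $\sigma$ was processed, giving $\deg_{\hat H}(\sigma)=d_\sigma\in[D/2,D)$. Combined with $D\in(|H|/(4n^{r-1}),A]$ by construction, case (ii) holds.

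The main obstacle is this verification: one must combine the uniqueness of forward shadows in an $r$-partite graph with the simultaneous-deletion structure of the cleaning to rule out any ``extra'' edge of $\hat H$ outside the $d_\sigma$-sized group from containing $\sigma$ --- otherwise $\deg_{\hat H}(\sigma)$ might exceed $D$. A secondary point is matching the lemma's precise constant $1/(4r\log n)$: the naive double pigeonhole yields only $\Omega(|H|/(r^2\log n))$, and closing the extra factor of $r$ likely amounts to a routine bookkeeping optimization (e.g.\ a tighter count of dyadic levels using $|H|\le (n/r)^r$, or integrating the index pigeonhole with the cleaning order).
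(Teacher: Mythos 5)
Your proposal follows essentially the same route as the paper's proof: run the greedy cleaning (delete all edges through any low-degree $(r-1)$-shadow), declare case~(i) if at least half of $H$ survives, and otherwise bucket the deleted edges by the omitted part and the dyadic level of the witness degree, discard the ``tiny'' levels using the $n^{r-1}$ bound on the number of shadows, and pigeonhole. The paper's algorithm and the bound $|H|/4$ on tiny deletions are exactly what you describe.

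Both of the concerns you raise at the end resolve in your favor. On the ``extra edges'' worry: the unique-forward-shadow argument you sketch is precisely correct and is the (implicit) justification in the paper as well. Since $H$ is $r$-partite and every edge of $\hat H$ has $i(e)=r$ after relabeling, any edge $e\in\hat H$ containing an $(r-1)$-set $\sigma\subseteq V_1\cup\dots\cup V_{r-1}$ must have $\sigma = e\cap(V_1\cup\dots\cup V_{r-1}) = \sigma_e$, so the edges of $\hat H$ through $\sigma$ are exactly the batch deleted when $\sigma$ was processed, forcing $\deg_{\hat H}(\sigma)=d_\sigma$. On the constant: you do not lose an extra factor of $r$. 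Codegrees are positive integers bounded by $A\le n$, so the dyadic index $a$ (defined via $2^{a-1}\le d_\sigma<2^a$) ranges over only $O(\log n)$ values regardless of the threshold $|H|/(4n^{r-1})$; that threshold merely prunes which levels are counted as non-tiny, it does not enlarge the range of levels. Thus the pigeonhole is over at most $r\cdot O(\log n)$ buckets, yielding $\Omega(|H|/(r\log n))$, which matches the lemma's $|H|/(4r\log n)$ up to the exact constant.
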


\begin{proof}
Let $V(H)=V_1\cup\dots\cup V_r$ be an $r$-partition of $H$. We run the following algorithm that partitions $H$ into subgraphs $\Hh_L$ and $\Hh_{J, a}$, where $J\in \binom{[r]}{r-1}$ and $1\le a\le \log A$:
    
    \begin{itemize}
        \item[1.] Let $H_0=H$. Given $H_i$ for some $i\ge 0$, if there is an $(r-1)$-shadow $\sigma$ of $H_{i}$ such that $\deg_{H_{i}}(\sigma)\le A$, then we arbitrarily fix such a $\sigma$ and let $H_{i+1}=H_{i}-N_{H_{i}}(\sigma)$, where $N_{H_i}(\sigma)$ denotes the set of edges in $H_{i}$ containing $\sigma$. Let $(J,a)$ be the unique pair such that $\sigma$ contain a vertex of $V_j$ for each $j\in J$ and $2^{a-1}\le \deg_{H_{i}}(\sigma)<2^{a}$. We put all the elements in $N_{H_{i}}(\sigma)$ into the edge set of $\Hh_{J, a}$. 
        \item[2.] If all $(r-1)$-shadows $\sigma$ of $H_{i}$ have $\deg_{H_{i}}(\sigma)> A$, then we let $\Hh_L=H_{i}$ and terminate the algorithm.
    \end{itemize}

If $|\Hh_L|\ge |H|/2$, we let $\hat{H}=\Hh_L$, which by definition implies $\hat{H}$ satisfies (i). Otherwise,
$$
\sum_{1\le a\le \log A}\sum_{J\in\binom{[r]}{r-1}}|\Hh_{J, a}|>|H|/2.
$$

Note that each $(r-1)$-shadow $\sigma$ of $H$ contributes at most $2^a$ to some $|\Hh_{J, a}|$, and the number of $(r-1)$-shadows is at most $n^{r-1}$. Thus we have 
$$
\sum_{1\le a\le \log \frac{|H|}{4n^{r-1}}}\sum_{J\in\binom{[r]}{r-1}}|\Hh_{J, a}|\le n^{r-1}\frac{|H|}{4n^{r-1}}=\frac{|H|}{4},
$$
and hence
$$
\sum_{\log \frac{|H|}{4n^{r-1}}< a\le \log A}\sum_{J\in\binom{[r]}{r-1}}|\Hh_{J, a}|>\frac{|H|}{4}.
$$

By the Pigeonhole Principle, there exists a pair $(J,a)$ such that $2^a\ge \frac{|H|}{4n^{r-1}}$ and that $|\Hh_{J,a}|\ge |H|/(4r\log n)$. Fix such a pair $(J,a)$ and let $\hat{H}=\Hh_{J,a}$. Without loss of generality, we can suppose $J=\{1,\dots,r-1\}$. By definition, any $(r-1)$-shadow $\sigma$ in $V_1\cup\dots\cup V_{r-1}$ has $2^{a-1}\le \deg_{\hat{H}}(\sigma)<\min\{2^{a},A\}$. Therefore, $\hat{H}$ satisfies (ii) by letting $D=\min\{2^{a},A\}\ge |H|/(4n^{r-1})$.
\end{proof}

The following lemma comes from~\cite[Lemma 2.2]{nie2023random} and is proven in essentially the same way as described in the second approach of \Cref{subsec:sketch}.
\begin{lem}[\cite{nie2023random} Lemma 2.2]\label{lemma:bss_sub_tree}
    Let $T$ be a tight $r$-tree and let $F$ be a spanning subgraph of $T$ with $|F|\ge 2$. There exists a constant $C>0$ such that the following holds for all sufficiently large $n$ and every $A$ with $C\le A\le \binom{n}{r}/n^{r-1}$. Let $H$ be an $r$-graph on $n$ vertices. If every $(r-1)$-shadow $\sigma$ of $H$ has $\deg_H(\sigma)> A$, then there exists a collection $\c{H}$ of copies of $F$ in $H$ satisfying
    \begin{enumerate}
        \item[(a)] $|\c{H}|\ge C^{-1} |H|A^{v(F)-r}$;
        \item[(b)] $\Delta_i(\c{H})\le CA^{v(F)-r-\frac{i-1}{d_r(F)}}$,~~$1\le i\le |F|$.
    \end{enumerate}
    In particular, we have for all $1\le i\le |F|$ that
    \[\Del_i(\c{H})\le C^2 \frac{|\c{H}|}{|H|} A^{-\frac{i-1}{d_r(F)}}.\]
\end{lem}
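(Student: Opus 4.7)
The strategy is the ``Greedy Expansion'' described in \Cref{subsec:sketch}: realise $F$ as a spanning subgraph of a tight $r$-tree $T$ and embed $T$ into $H$ one edge at a time, following a tight tree ordering $h_1,\ldots,h_t$ of $E(T)$ with corresponding vertex ordering $v_1,\ldots,v_{v(T)}$ ($v_1,\ldots,v_r$ being the vertices of $h_1$ and $v_{r+k}$ the unique new vertex introduced by $h_{k+1}$). For each $(r-1)$-shadow $\sigma'$ of $H$ I freeze an arbitrary subset $N(\sigma')\subseteq N_H(\sigma')$ of size roughly $A$, which exists since $\deg_H(\sigma')>A$. The construction is: for each edge $h\in E(H)$ and each bijection $\pi\colon\{v_1,\ldots,v_r\}\to h$, iteratively define $\Phi(v_{r+k})$ by picking a vertex in $N(\Phi(h_{k+1}\setminus\{v_{r+k}\}))$ distinct from previously placed vertices. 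At each step this set has size $\ge A-v(T)\ge A/2$ provided $C\ge 2v(T)$, so every run produces an embedding $\Phi\colon V(T)\to V(H)$; take $\mathcal{H}$ to be the collection of copies of $F$ arising as $\Phi(F)$.

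For part (a), the construction yields at least $|H|\cdot r!\cdot (A/2)^{v(T)-r}$ successful runs, and since $V(F)=V(T)$ each copy in $\mathcal{H}$ is produced by only $O_{F,T}(1)$ runs (via relabelling by $\mathrm{Aut}(F)$), giving $|\mathcal{H}|\ge C^{-1}|H|A^{v(F)-r}$. For part (b), fix $\sigma\subseteq E(H)$ with $|\sigma|=i$; any $F'\in\mathcal{H}$ containing $\sigma$ comes from an embedding $\Phi$ with $\sigma\subseteq\Phi(F)$, and the preimage $F_\sigma:=\Phi^{-1}(\sigma)\subseteq F$ is an $i$-edge subgraph of $F$. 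Summing over the $O_F(1)$ choices of $F_\sigma$ and the $O_F(1)$ compatible bijections $V(F_\sigma)\to V(\sigma)$, each remaining greedy step places a vertex chosen from a restricted set of size at most $A$, so the number of completions is at most $O(A^{v(T)-v(F_\sigma)})$. For $i\ge 2$, the definition of the $r$-density gives $v(F_\sigma)-r\ge (i-1)/d_r(F)$, hence $v(T)-v(F_\sigma)=v(F)-v(F_\sigma)\le v(F)-r-(i-1)/d_r(F)$; the case $i=1$ is trivial. This establishes (b), and the ``in particular'' bound follows by combining (a) and (b).

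The main obstacle is subtler than the bare counting above lets on. With a single fixed tight tree ordering, if $h_1$ does not lie in $F_\sigma$ then the initial choice of $\Phi(h_1)$ is a free edge of $H$ subject only to the mild constraint from $\Phi|_{V(F_\sigma)\cap V(h_1)}$, and the codegree of this partial constraint can be as large as $n$, threatening to inflate the count of extensions by a factor up to $n/A$. I expect to handle this either (i) by running the construction over all tight tree orderings of $T$ in parallel, together with a canonical-anchor convention so every output copy is charged to a unique run and every $F_\sigma$ is matched by some ordering with $h_1\in F_\sigma$, or (ii) by first regularising $H$ via \Cref{lemma:regularize2} so codegrees are controlled from above as well as below, making every branching factor---including the one at the ``first'' step---genuinely at most $A$. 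Once this bookkeeping is in place, the density estimate above yields the claimed bound on $\Delta_i(\mathcal{H})$, and the remainder of the argument is purely routine.
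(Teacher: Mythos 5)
The paper does not prove this lemma itself; it is quoted verbatim as \cite[Lemma 2.2]{nie2023random}, so I can only assess your argument on its own terms. Your construction (greedy expansion along a tight-tree ordering) is the right high-level idea, and your counting for part (a) and the density bound $v(F_\sigma)-r\ge (i-1)/d_r(F)$ for part (b) are fine. The difficulty you flag at the end, however, is a genuine gap, not a bookkeeping nuisance, and neither of your two proposed repairs closes it.

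Concretely, the hypothesis gives only a lower bound $\deg_H(\sigma')>A$ on $(r-1)$-shadow codegrees. Take $T$ to be the tight $r$-path, $F=T$, and $H$ containing a clique $K_N^{(r)}$ with $A\ll N$. Fix $e\in E(H)$ and take $F_\sigma=\{h_2\}$. Then $\Phi|_{V(h_2)}$ is fixed, but $\Phi(v_1)$ ranges over $N_H(\Phi(\{v_2,\dots,v_r\}))$, which has size $\Theta(N)\gg A$, and the forward greedy constraint $\Phi(v_{r+1})\in N(\Phi(\{v_2,\dots,v_r\}))$ is a constraint on $e$ only, not on $\Phi(v_1)$. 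This already gives at least $\Omega(N\cdot A^{v(T)-r-1})\gg A^{v(F)-r}$ copies of $F$ in your $\mathcal{H}$ through $e$, so $\Delta_1(\mathcal H)$ violates (b) for any fixed $C$. Your bound ``the number of completions is at most $O(A^{v(T)-v(F_\sigma)})$'' is simply false as stated, because the step determining the free part of $\Phi(h_1)$ is not a greedy step.

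Neither suggested repair resolves this. For (i), the map $(\text{copy},\sigma)\mapsto F_\sigma$ depends on $\sigma$, not only on the copy, so there is no canonical ordering attached to a copy that works for every $\sigma$ simultaneously; moreover, the greedy constraint $\Phi(v_{r+k})\in N(\Phi(h_{k+1}\setminus\{v_{r+k}\}))$ genuinely depends on the ordering (for the tight path the forward and reverse orderings impose different membership constraints on each internal vertex), so the collections $\mathcal H$ produced by different orderings differ, and unions or intersections of them lose the bounds you need for (a) or (b). For (ii), \Cref{lemma:regularize2} only produces an upper bound on $(r-1)$-shadow codegrees in its alternative (ii), which is precisely the regime where the paper switches to Shadow Expansion (\Cref{lemma:building from the shadow}) rather than this lemma; and even an $(r-1)$-shadow codegree upper bound does not control the branching at the first step when $|V(h_1)\cap V(F_\sigma)|<r-1$, since then the relevant quantity is the degree of a $j$-set with $j<r-1$, which is not bounded by this regularization. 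The ``right'' fix requires a genuinely new idea (e.g.\ a codegree-regularization over all shadow uniformities combined with a re-rooting argument, or an entirely different probabilistic selection), and as written your proposal does not supply it.
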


We are now ready to prove \Cref{lemma:BSviaGreedy}, which we restate below for convenience.
\begingroup
\def\thethm{\ref{lemma:BSviaGreedy}}
\begin{thm}
Let $F$ be an $(r-1)$-graph with $r-1,|F|\ge 2$ such that $F^{(r)}$ is a spanning subgraph of a tight $r$-tree and such that there exist positive functions $M_{r-1}=M_{r-1}(n)$, $\gamma_{r-1}=\gamma_{r-1}(n)$ and $\tau_{r-1}=\tau_{r-1}(n,m)$ with the following properties: 
\begin{itemize}
    \item[(a)] $F$ is $(M_{r-1},\gamma_{r-1},\tau_{r-1})$-balanced,
    \item[(b)] For sufficiently large $n$ the function $\gamma_{r-1}(n)$ is non-decreasing with respect to $n$,
    \item[(c)] For sufficiently large $n$ and $m\ge M_{r-1}(n)$, the function  $\tau_{r-1}(n,m)$ is non-decreasing with respect to $n$ and non-increasing with respect to $m$.
    
\end{itemize}
Then there exists a sufficiently large constant $C$ depending only on $F$  such that the following holds. Let 
\[M_r(n):=Cn^{r-1},\]
\begin{equation*}
\gamma_{r}(n):=\gamma_{r-1}(n)\cdot C\log n,  
\end{equation*}
and let $A=A(n,m)$ be any function such that, for all sufficiently large $n$ and $m\ge M_r(n)$,
\begin{itemize}
    \item[(d)]
    \begin{equation*}
        m n^{1-r}\le A(n,m)\le \frac{m}{M_{r-1}(n)\cdot C\log n},
    \end{equation*}
\end{itemize}
 
and let
\begin{equation*}
\tau_{r}(n,m):=\max\l\{A(n,m)^{-\frac{1}{d_r\l(F^{(r)}\r)}}\cdot m,~\tau_{r-1}\l(n,~\frac{m}{A(n,m)\cdot C\log n}\r)\cdot C\log n\r\}.   
\end{equation*}
Then $F^{(r)}$ is $(M_r,\gamma_{r},\tau_{r})$-balanced.
\end{thm}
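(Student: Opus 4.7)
The plan is to combine the Shadow Expansion and Greedy Expansion approaches of \Cref{subsec:sketch}. Given an $n$-vertex $r$-graph $H$ with $m\ge M_r(n)=Cn^{r-1}$ edges, I first pass to a maximum $r$-partite subgraph $H_0\sub H$ with $|H_0|\ge(r!/r^r)m$. I then apply \Cref{lemma:regularize2} to $H_0$ with parameter $A=A(n,m)$; the hypothesis $|H_0|/(4n^{r-1})<A$ follows from $A(n,m)\ge mn^{1-r}>|H_0|/(4n^{r-1})$ via condition (d). This splits the argument into two cases, and in either case I will build a collection $\c{H}$ of copies of $F^{(r)}$ satisfying the balanced supersaturation estimate for one of the two terms in the max defining $\tau_r(n,m)$; this suffices since $\tau_r$ is the max of those two terms.

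In Case~(i) of \Cref{lemma:regularize2} there is a subgraph $\hat H\sub H_0$ with $|\hat H|\ge|H_0|/2$ in which every $(r-1)$-shadow has codegree at least $A$. Fixing a tight $r$-tree $T$ of which $F^{(r)}$ is a spanning subgraph, I apply \Cref{lemma:bss_sub_tree} (Greedy Expansion) to $\hat H$, producing $\c{H}$ with $\Del_i(\c{H})\le O(1)\cdot(|\c{H}|/|\hat H|)\cdot A^{-(i-1)/d_r(F^{(r)})}$. Since $|\hat H|\ge\Om(m)$, this rearranges to $O(1)\cdot(|\c{H}|/m)\cdot(A^{-1/d_r(F^{(r)})}m/m)^{i-1}$, exactly the bound coming from the first term inside the max for $\tau_r(n,m)$.

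In Case~(ii) there is a subgraph $\hat H\sub H_0$ with $|\hat H|\ge|H_0|/(4r\log n)$, an $r$-partition $V_1\cup\cdots\cup V_r$, and a value $|H_0|/(4n^{r-1})<D\le A$ such that every $(r-1)$-shadow $\sig$ of $\hat H$ in $V_1\cup\cdots\cup V_{r-1}$ has $D/2\le\deg_{\hat H}(\sig)<D$. Let $\hat n=|V_1\cup\cdots\cup V_{r-1}|$ and let $\hat m$ count these shadows, so $|\hat H|\le D\hat m\le 2|\hat H|$. Then $\hat m\ge|\hat H|/D\ge m/(A\cdot O(\log n))$, so condition (d) with $C$ sufficiently large forces $\hat m\ge M_{r-1}(\hat n)$, and $D/2\ge\Om(C)\ge 2v(F^{(r)})$ verifies the codegree hypothesis of \Cref{lemma:building from the shadow}. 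Applying that lemma (Shadow Expansion) with $r_0=r-1$ and the balancedness of $F$ from (a) yields a collection $\c{H}$ with $\Del_i(\c{H})\le O(1)\cdot(\gam_{r-1}(\hat n)|\c{H}|/(D\hat m))\cdot(\tau_{r-1}(\hat n,\hat m)/(D\hat m))^{i-1}$. Using $D\hat m\ge\Om(m/\log n)$ and monotonicity in (b) and (c) to replace $(\hat n,\hat m)$ by $(n,m/(A\cdot C\log n))$, this rearranges into the bound given by the second term of $\tau_r(n,m)$.

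The main technical obstacle is careful polylogarithmic bookkeeping. In particular, one must choose the single constant $C$ in the statement large enough that simultaneously: the range condition on $A$ in \Cref{lemma:bss_sub_tree} holds in Case~(i); the bound $\hat m\ge M_{r-1}(\hat n)$ holds in Case~(ii) via (d); and all the $\log n$ factors accumulated from the two applications fit inside $\gam_r(n)=C\gam_{r-1}(n)\log n$ and the $C\log n$ factors appearing in $\tau_r(n,m)$. Once these constants are tuned, monotonicity in (b) and (c) converts the estimates at $(\hat n,\hat m)$ into the stated estimates at $(n,m)$.
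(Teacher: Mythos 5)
Your proposal is correct and follows essentially the same route as the paper: pass to a maximum $r$-partite subgraph, regularize via \Cref{lemma:regularize2} with parameter $A(n,m)$, then in Case (i) apply the Greedy Expansion lemma \Cref{lemma:bss_sub_tree} (matching the first term in the max for $\tau_r$) and in Case (ii) apply the Shadow Expansion lemma \Cref{lemma:building from the shadow} with $r_0=r-1$ (matching the second term), using properties (b)–(d) to convert the resulting estimates at $(\hat n,\hat m)$ into the desired bound at $(n,m)$. The bookkeeping you flag (choosing one constant $C$ large enough to absorb all accumulated $\log n$ factors and verify the range conditions simultaneously) is exactly what the paper carries out.
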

\addtocounter{thm}{-1}
\endgroup

\begin{proof}
Let $H$ be an $n$-vertex $m$-edge $r$-graph such that $n$ is sufficiently large and $m\ge M_{r}(n)$. Let $H'\sub H$ be a maximum sized $r$-partite subgraph of $H$. By considering a random $r$-partition of $V(H)$, we know that $|H'|\ge \frac{r\,!}{r^{r}}m$. By Lemma~\ref{lemma:regularize2}, there exists a subgraph $\hat{H}\subseteq H'$ with 
\begin{equation}\label{equation:greedy_H''LowerBound}
|\hat{H}|\ge \frac{|H'|}{4r\log n}\ge\frac{r\,!}{4r^{r+1}}\cdot\frac{m}{\log n}    
\end{equation}
and an $r$-partition $V(\hat{H})=V_1\cup\dots\cup V_{r}$ such that one of the following two statements is true.
\begin{itemize}
    \item[(i)]  Every $(r-1)$-shadow of $\hat{H}$ has codegree at least $A$.
    \item[(ii)] There exists $|H'|/(4n^{r-1})<D\le A$ such that every $(r-1)$-shadow $\sigma$ of $\hat{H}$ in $V_1\cup\dots\cup V_{r-1}$ satisfies 
        $$
        D/2\le \deg_{\hat{H}}(\sigma)<D.
        $$
\end{itemize}
The proof splits into two cases accordingly.

\noindent\textbf{Case 1: Statement (i) is true.}  Because $A\ge m n^{1-r}$ by property (d) and because $m\ge M_r(n)=C n^{r-1}$ with $C$ sufficiently large, we can apply Lemma~\ref{lemma:bss_sub_tree} to conclude that there exists a constant $C'$ and a collection $\c{H}$ of copies of $F^{(r)}$ in $\hat{H}$ such that for all $1\le i\le |F^{(r)}|$,
\begin{equation*}
\Delta_i(\c{H})\le C'\frac{|\c{H}|}{|\hat{H}|}A^{-\frac{i-1}{d_r(F^{(r)})}}\le \frac{C'2r^{r+1}\log n}{r\,!}\cdot\frac{|\c{H}|}{m}\l(\frac{A^{-\frac{1}{d_r(F^{(r)})}}\cdot m}{m}\r)^{i-1}\le \gamma_r(n)\cdot \frac{|\c{H}|}{m}\left(\frac{\tau_r(n,m)}{m}\right)^{i-1},
\end{equation*}
where the second inequality used \eqref{equation:greedy_H''LowerBound}, and  this last step used the definition of $\gamma_r$ and property (b) to conclude \[\gamma_r(n)\ge C \log n\cdot \min\{1,\gamma_{r-1}(n_{r-1})\}\ge \frac{C' 2r^{r+1}\log n}{r!}\]
provided $C$ is sufficiently large, and similarly that $\tau_r(n,m)\ge A^{-\frac{1}{d_r(F^{(r)})}}\cdot m$.

\noindent\textbf{Case 2: Statement (ii) is true.} We aim to apply Lemma~\ref{lemma:building from the shadow} on $\hat{H}$.  To this end, let $\hat{H}_{[r-1]}$ be the $(r-1)$-graph on $\cup_{i\in [r-1]} V_i$ whose edges are all $(r-1)$-shadows of $\hat{H}$ in $\cup_{i\in [r-1]} V_i$. Let $\hat{n}=v(\hat{H}_{[r-1]})=|\cup_{i\in [r-1]} V_i|$ and let $\hat{m}=|\hat{H}_{[r-1]}|$. 

Recall that $D\ge \frac{|H'|}{4n^{r-1}}\ge\frac{r\,!\,m}{4r^rn^{r-1}}$ and that $m\ge Cn^{r-1}$. So for $C$ sufficiently large, we have
$$
D/2\ge\frac{Cr\,!}{4r^r}\ge 2v(F^{(r)}).
$$
By statement (ii), we have 
\begin{equation}\label{equation:greedy_UpperBoundOfH''}
D\hat{m}\ge |\hat{H}|.
\end{equation}

Thus by (\ref{equation:greedy_UpperBoundOfH''}), (\ref{equation:greedy_H''LowerBound}), the hypothesis $D\le A$, and property (d), we have
$$
\hat{m}\ge \frac{|\hat{H}|}{D}\ge \frac{r\,!}{4r^{r+1}}\cdot\frac{m}{A\cdot \log n}\ge M_{r-1}(n).
$$

Given sufficiently large $n$, we have sufficiently large $\hat{m}$, and hence sufficiently large $\hat{n}$. So by property (a) and the inequalities above, we can apply Lemma~\ref{lemma:building from the shadow} on $\hat{H}$ to obtain a collection $\c{H}$ of copies of $F^{(r)}$ such that, for all $1\le i\le |F|$,
\begin{equation*}
\Delta_i(\c{H})\le \frac{\gamma_{r-1}(\hat{n})|\c{H}|}{D\hat{m}}\cdot\l(\frac{\tau_{r-1}(\hat{n},\hat{m})}{D\hat{m}}\r)^{i-1}4^{r|F|}.
\end{equation*}
By properties (b), (c) and (d), and inequalities (\ref{equation:greedy_H''LowerBound}) and (\ref{equation:greedy_UpperBoundOfH''}), 
\begin{align*}
\Delta_i(\c{H})&\le \frac{4^{r|F|}2r^{r+1}\log n}{r\,!}\cdot \frac{\gamma_{r-1}(n)|\c{H}|}{m}\cdot\l(\frac{2r^{r+1}\log n}{r\,!}\cdot\frac{\tau_{r-1}\l(n,\frac{r\,!\cdot m}{2r^{r+1}\log n\cdot A}\r)}{m}\r)^{i-1}\\ &\le \frac{\gamma_{r}(n)|\c{H}|}{m}\l(\frac{\tau_{r}(n,m)}{m}\r)^{i-1}.
\end{align*}

In total then, we find that in either case there exists a collection $\c{H}$ satisfying, for all $1\le i\le |F|$ that
\[\Del_i(\c{H})\le \frac{\gamma_{r}(n)|\c{H}|}{m}\l(\frac{\tau_{r}(n,m)}{m}\r)^{i-1},\]
showing that  $F^{(r)}$ is indeed $(M_r,\gamma_r,\tau_r)$-balanced.
\end{proof}

\section{Optimal Balanced Supersaturation}\label{sec:optimal}
Here we explore what can be said about (expansions) $F$ which have ``optimal'' balanced supersaturation.  More precisely, we observe the following limits for how balanced a hypergraph $F$ can be.

\begin{prop}\label{prop:optimalBSBound}
If $F$ is an $r$-graph with $r,|F|\ge 2$ which is $(M,\gamma,\tau)$-balanced, then for sufficiently large $n$ we have $M(n)> \ex(n,F)$ and 
$$
\tau(n,m)\ge r^{-\frac{r|F|}{|F|-1}}\gamma(n)^{-\frac{1}{|F|-1}}n^{r-\frac{1}{d_r(F)}}.
$$ 
\end{prop}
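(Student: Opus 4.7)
The plan has two components. The first assertion $M(n)>\ex(n,F)$ follows by a direct contradiction: if $M(n)\le \ex(n,F)$, take $H$ to be an $F$-free $r$-graph on $n$ vertices with exactly $\ex(n,F)$ edges. Then $|H|\ge M(n)$, so the $(M,\gamma,\tau)$-balancedness hypothesis yields a non-empty collection $\c{H}$ of copies of $F$ in $H$, contradicting the $F$-freeness of $H$.

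For the $\tau$ bound, the strategy is to apply the balancedness to a large explicit test graph and combine the resulting degree inequality with a matching lower bound on some $\Delta_i(\c{H})$ obtained by double counting. Fix $F^*\sub F$ with $|F^*|\ge 2$ realizing the maximum in $d_r(F)=(|F^*|-1)/(v(F^*)-r)$. Apply the definition of balancedness to $H=K_n^{(r)}$, where $m=\binom{n}{r}\ge M(n)$ for $n$ large, to get a non-empty $\c{H}$ of copies of $F$ in $H$ satisfying \eqref{equation:Delta}. Double count incidences: each $F_0\in\c{H}$ contains exactly $c_{F,F^*}\ge 1$ sub-copies of $F^*$, while the total number of copies of $F^*$ in $K_n^{(r)}$ is at most $n^{v(F^*)}$. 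Summing $\deg_{\c{H}}(S)$ over $|F^*|$-edge sets $S$ that form copies of $F^*$ in $H$ gives $c_{F,F^*}|\c{H}|$, and averaging yields
\[\Delta_{|F^*|}(\c{H})\ \ge\ \frac{c_{F,F^*}|\c{H}|}{n^{v(F^*)}}.\]
Plugging this into the balancedness inequality at $i=|F^*|$, cancelling $|\c{H}|$, and using $m=\binom{n}{r}\ge (n/r)^{r}$ produces after rearrangement
\[\tau(n,m)\ \ge\ C\cdot r^{-\frac{r|F^*|}{|F^*|-1}}\gamma(n)^{-\frac{1}{|F^*|-1}}n^{(r|F^*|-v(F^*))/(|F^*|-1)},\]
where $C>0$ depends only on $F$. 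By the choice of $F^*$, the exponent $(r|F^*|-v(F^*))/(|F^*|-1)$ equals exactly $r-1/d_r(F)$, which is the desired exponent on $n$.

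The hard part is reconciling the derived constant and $\gamma$-factor with the precise form stated. When $F$ attains its own density we have $F^*=F$ and the derived bound is exactly the one claimed. In the general case $|F^*|\le|F|$, so the derived $\gamma^{-1/(|F^*|-1)}$ and $r^{-r|F^*|/(|F^*|-1)}$ differ from the stated $\gamma^{-1/(|F|-1)}$ and $r^{-r|F|/(|F|-1)}$. The plan to resolve this is to verify that for the balancedness application the key step can instead be carried out using $i=|F|$ (where trivially $\Delta_{|F|}(\c{H})\ge 1$) combined with the crude upper bound $|\c{H}|\le N_F(K_n^{(r)})\le n^{v(F)}$, which produces the exact constant and $\gamma$-factor in the statement at the expense of the weaker exponent $r-(v(F)-r)/(|F|-1)$; then take the maximum of the two bounds and absorb the $F$-dependent constants to get a uniform statement in the form displayed. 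The main technical obstacle is tracking these constants cleanly so the final bound takes the uniform form $r^{-r|F|/(|F|-1)}\gamma(n)^{-1/(|F|-1)}n^{r-1/d_r(F)}$ for every subgraph-relation between $F$ and $F^*$.
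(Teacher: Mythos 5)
Your argument for $M(n)>\ex(n,F)$ matches the paper. For the $\tau$ bound, however, there are two genuine issues with the proposed proof. First, applying balancedness to $H=K_n^{(r)}$ only yields the inequality at the single value $m=\binom{n}{r}$, whereas the statement must hold for every $m\ge M(n)$. The paper instead takes $H=G^r_{n,p}$ with $p=m/\binom{n}{r}$ for an arbitrary $m\ge M(n)$, observing that with positive probability $|H|\ge m$ while the number of copies of $F$ in $H$ is at most $2p^{|F|}n^{v(F)}\le 2r^{r|F|}m^{|F|}n^{v(F)-r|F|}$, and then applies $\Delta_{|F|}(\c{H})=1$; in that calculation the powers of $m$ cancel and the bound holds for each admissible $m$. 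Second, the plan to take the maximum of the $i=|F^*|$ and $i=|F|$ bounds to recover the exact stated prefactors does not work: as $\gamma(n)\to\infty$ the $i=|F^*|$ bound decays like $\gamma(n)^{-1/(|F^*|-1)}$, strictly faster than the target $\gamma(n)^{-1/(|F|-1)}$ whenever $|F^*|<|F|$, while the $i=|F|$ bound carries the strictly smaller exponent $r-(v(F)-r)/(|F|-1)\le r-1/d_r(F)$; so for $n$ and $\gamma(n)$ both large, neither term in the maximum dominates the stated right-hand side.

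That said, your instinct to double-count against sub-copies of a density-realizing $F^*$ is substantive and not merely a complication. The paper's own proof, using only $i=|F|$, produces the exponent $r-(v(F)-r)/(|F|-1)$ and then passes to $r-1/d_r(F)$, but since $d_r(F)\ge (|F|-1)/(v(F)-r)$ that last inequality actually goes the wrong way whenever $F$ does not itself realize $d_r(F)$. Your $\Delta_{|F^*|}$ argument produces the claimed $n^{r-1/d_r(F)}$ exponent correctly, at the price of replacing $|F|$ by $|F^*|$ in the prefactors. In the paper's applications $\gamma$ is polylogarithmic and the relevant $F$ realize their own density, so the distinction is immaterial there; but a fully rigorous proof of the proposition for general $F$ should either carry the $|F^*|$-dependent prefactors (with $H=G^r_{n,p}$ rather than $K_n^{(r)}$) or state the weaker $n$-exponent that the $i=|F|$ argument actually yields.
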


\begin{proof}
If $M(n)\le \ex(n,F)$ for infinitely many $n$, then one could consider an infinite sequence of $F$-free $r$-graphs $H$ with $n$ vertices and $\ex(n,F)$ edges, from which no collection $\c{H}$ of copies of $F$ can be guaranteed, contradicting $F$ being $(M,\gamma,\tau)$-balanced.

Finally, let $H$ be the random $r$-uniform hypergraph $G^r_{n,p}$ with $p=m/\binom{n}{r}$ for some $m\ge M(n)$. With positive probability we have $|H|\ge m$ and the number of copies of $F$ in $H$ is at most $p^{|F|}n^{v(F)}\le r^{r|F|}m^{|F|}n^{v(F)-r|F|}$. Let $\c{H}$ be a non-empty collection of copies of $F$ in $H$ satisfying (\ref{equation:Delta}). Then we have $|\c{H}|\le r^{r|F|}m^{|F|}n^{v(F)-r|F|}$. Note that $\Delta_{|F|}(\c{H})=1$, so by (\ref{equation:Delta}) we have 
$$
\tau(n,m)\ge \l(\frac{m}{\gamma(n)|\c{H}|}\r)^{\frac{1}{|F|-1}}m\ge r^{-\frac{r|F|}{|F|-1}}n^{r-\frac{v(F)-r}{|F|-1}}\gamma(n)^{-\frac{1}{|F|-1}} \ge r^{-\frac{r|F|}{|F|-1}}n^{r-\frac{1}{d_r(F)}}\gamma(n)^{-\frac{1}{|F|-1}}.
$$
\end{proof}

Our main goal of this section is to prove a result (\Cref{prop:optimalBSExpansions}) giving a sufficient condition for expansions to have optimal balanced supersaturation, which will be the main ingredient for proving \Cref{thm:optimalBalanced}.  We will need two lemmas regarding expansions, the first of which shows that $r$-densities play nicely with expansions.

\begin{lem}\label{lem:densityRelation}
    For any $r_0$-graph $F$ with $r_0,|F|\ge 2$ and integer $r\ge r_0$, we have 
    \[\frac{1}{d_r(F^{(r)})}= r-r_0+\frac{1}{d_{r_0}(F)},\]
    and in particular $d_r(F^{(r)})<\frac{1}{r-r_0}$.
\end{lem}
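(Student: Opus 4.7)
The plan is to establish a clean correspondence between sub-hypergraphs of $F^{(r)}$ and sub-hypergraphs of $F$, and then translate the ratio defining the $r$-density through that correspondence via a single algebraic identity.

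First I would observe that the edges of $F^{(r)}$ are in natural bijection with those of $F$: each edge $e\in E(F)$ corresponds to the edge $e^{(r)}\in E(F^{(r)})$ formed by inserting $r-r_0$ distinct new vertices, and every such \emph{inserted vertex} belongs to exactly one edge of $F^{(r)}$. Consequently, any sub-hypergraph of $F^{(r)}$ with at least $2$ edges is of the form $(F')^{(r)}$ for a unique $F'\sub F$ with $|F'|\ge 2$ (the point being that an inserted vertex cannot appear in the vertex set of a sub-hypergraph unless its unique containing edge is included). For such a sub-hypergraph we have $|(F')^{(r)}|=|F'|$ and $v((F')^{(r)})=v(F')+(r-r_0)|F'|$.

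With this in hand, the core calculation is the identity
\[\frac{v((F')^{(r)})-r}{|(F')^{(r)}|-1}=\frac{(v(F')-r_0)+(r-r_0)(|F'|-1)}{|F'|-1}=\frac{v(F')-r_0}{|F'|-1}+(r-r_0),\]
valid for every $F'\sub F$ with $|F'|\ge 2$. Minimizing both sides over all such $F'$ and invoking the bijection from the previous step then yields
\[\frac{1}{d_r(F^{(r)})}=\frac{1}{d_{r_0}(F)}+(r-r_0),\]
which is precisely the claimed equality.

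For the ``in particular'' clause, I would argue that any two distinct edges of the $r_0$-graph $F$ together span at least $r_0+1$ vertices, so $v(F')-r_0\ge 1$ whenever $|F'|\ge 2$. This forces $1/d_{r_0}(F)>0$, hence $1/d_r(F^{(r)})>r-r_0$, and therefore $d_r(F^{(r)})<1/(r-r_0)$ (interpreting this trivially when $r=r_0$). I do not anticipate any genuine obstacle here: the statement is essentially a bookkeeping identity, and the only subtle point is the characterization of sub-hypergraphs of $F^{(r)}$, which follows immediately from the fact that each inserted vertex lies in a unique edge.
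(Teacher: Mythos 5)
Your proof is correct and follows essentially the same route as the paper's: both rest on the identity $\frac{v((F')^{(r)})-r}{|(F')^{(r)}|-1} = \frac{v(F')-r_0}{|F'|-1}+(r-r_0)$ and minimize over subgraphs. The only (cosmetic) difference is that you observe the edge-set correspondence $F'\leftrightarrow (F')^{(r)}$ is an exact bijection preserving the relevant parameters, whereas the paper proves the two directions separately — expanding the extremal $F'\subseteq F$ for the $\le$ direction, and for the $\ge$ direction taking the extremal $F'\subseteq F^{(r)}$ and passing to the induced subgraph of $F$ on $V(F')\cap V(F)$, which yields only the inequalities $|F''|\ge|F'|$ and $v(F'')\le v(F')-(r-r_0)|F'|$ rather than equalities. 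Your bijection viewpoint streamlines the second direction; otherwise the arguments coincide.
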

\begin{proof}
    Let $F'\sub F$ be a subgraph satisfying $d_{r_0}(F)=\frac{|F'|-1}{v(F')-r_0}$.  Then the expansion $F'':=(F')^{(r)}\sub F^{(r)}$ satisfies
    \[\frac{1}{d_r(F^{(r)})}\le \frac{v(F'')-r}{|F''|-1}=\frac{v(F')+(r-r_0)|F'|-r}{|F'|-1}=r-r_0+\frac{1}{d_{r_0}(F)}.\]

    Now let $F'\sub F^{(r)}$ be the subgraph satisfying $d_r(F^{(r)})=\frac{|F'|-1}{v(F')-r}$.  Define $F''\sub F$ to be the induced subraph on $V(F')\cap V(F)$.  Observe that $|F''|\ge |F'|$ and $v(F'')\le v(F')-(r-r_0)|F'|$ (since each edge of $F'$ contains $r-r_0$ vertices not in $V(F)$).  This implies that
    \[\frac{1}{d_{r_0}(F)}\le \frac{v(F'')-r_0}{|F''|-1}\le \frac{v(F')-r_0-(r-r_0)|F'|}{|F'|-1}=\frac{1}{d_r(F^{(r)})}-r+r_0,\]
    which together with the previous inequality gives the desired result.
\end{proof}

We next have a lemma which gives sufficient conditions for (expansions of) $F$ to be a spanning subgraph of a tight tree.  Here the $k$-shadow of an $r_0$-graph $T$ is defined to be the $k$-graph on $V(T)$ consisting of the $k$-shadows of $T$.

\begin{lem}\label{lem:spanningTreeInduction}
    If $F$ is a $k$-graph and $r_0\ge k$ an integer such that there exists a tight $r_0$-tree $T'$ whose $k$-shadow contains $F$ as a spanning subgraph, then for all $r\ge r_0$ there exists a tight $r$-tree $T$ which contains $F^{(r)}$ as a spanning subgraph.
\end{lem}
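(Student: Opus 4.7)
The plan is to induct on $r-r_0\ge 0$, handling the base case $r=r_0$ by a chain-expansion construction and the inductive step $r-1\to r$ by a pad-and-relabel trick. For the base case, the spanning hypothesis gives $V(T')=V(F)$. I would fix an enumeration of the edges of $F$, and for each $e\in F$ pick an edge $h(e)\in T'$ containing $e$, writing $h(e)=e\cup\{y_1^{(e)},\ldots,y_{r_0-k}^{(e)}\}$. Processing edges in order, I would introduce $r_0-k$ fresh vertices $v_1^{(e)},\ldots,v_{r_0-k}^{(e)}$ (each distinct from all vertices of $T'$ and from every $v$ introduced for a previously processed edge) and append the chain
\[g_j^{(e)}:=e\cup\{v_1^{(e)},\ldots,v_j^{(e)},y_{j+1}^{(e)},\ldots,y_{r_0-k}^{(e)}\},\qquad j=1,\ldots,r_0-k.\]
Each $g_j^{(e)}$ satisfies $g_j^{(e)}\setminus\{v_j^{(e)}\}\subseteq g_{j-1}^{(e)}$ (with $g_0^{(e)}:=h(e)$), so the construction yields a tight $r_0$-tree $T$ whose final edges $g_{r_0-k}^{(e)}=e\cup\{v_1^{(e)},\ldots,v_{r_0-k}^{(e)}\}$ assemble into a copy of $F^{(r_0)}$ with $V(T)=V(F^{(r_0)})$.

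For the inductive step $r-1\to r$, the induction hypothesis gives a tight $(r-1)$-tree $T_{r-1}$ spanning $F^{(r-1)}$. I would pick a new vertex $u$ and form $T^+:=\{h\cup\{u\}:h\in T_{r-1}\}$, which is a tight $r$-tree since the tight-tree order of $T_{r-1}$ lifts directly (each defining new vertex of $T_{r-1}$ remains defining in $T^+$, and $u$ is never the new vertex). Fixing an arbitrary $e_1\in F$, for each other $e\in F$ I would introduce a fresh vertex $w^{(e)}$ and append the edge $e^{(r-1)}\cup\{w^{(e)}\}$, using the parent $e^{(r-1)}\cup\{u\}\in T^+$, where $e^{(r-1)}$ denotes the edge of $F^{(r-1)}\subseteq T_{r-1}$ corresponding to $e$. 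Relabeling $u$ as $w^{(e_1)}$, the edges $\{e^{(r-1)}\cup\{w^{(e)}\}:e\in F\}$ form a copy of $F^{(r)}$ in the resulting tight $r$-tree $T_r$, with $V(T_r)=V(F^{(r)})$.

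The main obstacle throughout is maintaining the \emph{spanning} condition. Naive constructions---such as padding every edge of $T'$ with $r-r_0$ common vertices---yield tight $r$-trees containing $F^{(r)}$ as a subgraph, but leave extraneous vertices outside $V(F^{(r)})$. The chain expansion in the base case is designed to introduce exactly the fresh expansion vertices required by $F^{(r_0)}$ and nothing more, and the reuse of the pad vertex $u$ as the $F^{(r)}$-expansion vertex for the chosen edge $e_1$ in the inductive step absorbs the single leftover vertex, so no extras accumulate as the induction proceeds from $r_0$ up to $r$.
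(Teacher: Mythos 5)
Your proof is correct and takes a genuinely different route from the paper. The paper introduces an auxiliary notion of $F$ being $(r,\ell)$-good---meaning there is a tight $r$-tree whose $\ell$-shadow contains $F^{(\ell)}$ as a spanning subgraph---and performs a double induction, alternating the transitions $(r,\ell)\to(r,\ell+1)$ (which introduces new vertices) and $(r,r)\to(r+1,r)$ (which does not). Your base case collapses all of $(r_0,k)\to(r_0,k+1)\to\cdots\to(r_0,r_0)$ into a single chain construction, and your inductive step from $r-1$ to $r$ merges the paper's two moves $(r-1,r-1)\to(r,r-1)\to(r,r)$ into one. The most noticeable technical difference is in the uniformity-raising step: the paper avoids introducing any auxiliary vertex there by reusing the existing tight-tree structure, replacing each edge $h_i$ by $h_{i'}\cup\{v_i\}$ where $h_{i'}$ is a parent of $h_i$ in the tight-tree order; you instead pad every edge with a fresh vertex $u$ and then absorb $u$ by relabeling it as the new expansion vertex of a distinguished edge $e_1$. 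Both devices serve the same bookkeeping purpose of keeping $V(T)=V(F^{(r)})$ at every stage, and your closing remark about why naive padding fails correctly identifies the central obstacle that both arguments are engineered to overcome. Your version is arguably more concrete and self-contained, while the paper's $(r,\ell)$-good framework makes the two inductive directions more visibly independent of one another.
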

In particular, if $r_0=k$ then the hypothesis of this lemma simply says that $F$ is a spanning subgraph of a tight $r_0$-tree.
\begin{proof}
    The result is trivial if $F$ is a single edge, so we will assume this is not the case.  Given a pair of integers $r\ge r_0$ and $\ell\ge k$, we will say that $F$ is $(r,\ell)$-good if there exists some ordering $v_1,v_2,\ldots,v_t$ of the vertices of $F^{(\ell)}$ and an ordered set of $r$-sets $h_r,h_{r+1}\ldots,h_t$ such that (1) $h_{r}=\{v_1,\ldots,v_{r}\}$, (2) for all $i>r$ we have $e_i\setminus \bigcup_{j<i} h_j=\{v_{i}\}$ and there exists some $i'<i$ with $h_{i}-v_{i}\sub h_{i'}$, (3) every edge of $F^{(\ell)}$ is contained in some $h_i$.

    We first observe that $F$ being $(r,\ell)$-good implies that the $r$-graph $T$ with edges $h_r,\ldots,h_t$ is a tight $r$-tree whose $\ell$-shadow contains $F^{(\ell)}$ as a spanning subgraph.  In particular, the hypothesis  implies that $F$ is $(r_0,k)$-good.  We aim to show that $F$ is $(r,r)$-good for any $r\ge r_0$, proving the result.  We do this through double induction.

    Assume that we shown $F$ to be $(r,\ell)$-good for some $r\ge r_0$ and $r> \ell\ge k$; we aim to show that $F$ is  $(r,\ell+1)$-good.  Let $h_r,\ldots,h_t$ be the ordered $r$-sets guaranteed by $F$ being $(r,\ell)$-good.  For each edge $e\in F^{(\ell)}$, let $v_e$ denote the unique vertex such that $\{v_e\}\cup e$ is an edge of $F^{(\ell+1)}$, and let $i_e$ denote any index such that $h_{i_e}$ contains $e$ (which exists by (3)).  For each $e\in F^{(\ell)}$, we let $h_e$ be any $r$-set which consists of $v_e$ and any $(r-1)$-subset of $h_{i_e}$ which contains $e$ (which exists since $|e|=\ell<r$ by hypothesis).  It is not difficult to check that $h_r,\ldots,h_t$ followed by any ordering of these $h_e$ sets verifies that $F$ is $(r,\ell+1)$-good, proving this claim.

    Now assume we have shown $F$ to be $(r,r)$-good for some $r\ge r_0,k$.   We aim to show that $F$ is $(r+1,r)$-good.  Let $h_r,\ldots,h_t$ be the ordered $r$-sets guaranteed by $F$ being $(r,\ell)$-good.  Note that we must have $r<t$, as otherwise (3) would imply that $F^{(r)}$ must in fact just be a single edge, which we assumed not to be the case.  With this in mind, we construct $(r+1)$-sets $h_{r+1}',\ldots,h_t'$ by having $h_i'=h_{i'}\cup \{v_i\}$ for all $i\ge r+1$.  It is not difficult to see that these $(r+1)$-sets verify $F$ as being $(r+1,r)$-good.

    Combining the two claims above together with the observation that $F$ is $(r_0,\ell)$-good gives that $F$ is $(r,r)$-good for all $r\ge r_0$, proving the result.
\end{proof}

We now prove our main result of the section, which roughly shows that if $F$ has sufficiently high uniformity and optimal balanced supersaturation, then so does every expansion of $F$.

\begin{prop}\label{prop:optimalBSExpansions}
Let $F$ be an $r_0$-graph with $\Delta(F)\ge 2$ which is the spanning subgraph of a tight $r_0$-tree such that there exists a constant $C_{r_0}$ such that $F$ is $(M_{r_0},\gamma_{r_0},\tau_{r_0})$-balanced with $M_{r_0}(n)= C_{r_0}n^{r_0-1}$, $\gamma_{r_0}(n)= (\log n)^{C_{r_0}}$ and $\tau_{r_0}(n,m)= n^{r_0-\frac{1}{d_{r_0}(F)}}(\log n)^{C_{r_0}}$.  Then for all $r\ge r_0$, $F^{(r)}$ is the spanning subgraph of a tight tree and there exists a constant $C_r$ such that $F^{(r)}$ is $(M_r,\gamma_r,\tau_r)$-balanced with $M_r(n)= C_r n^{r-1}$, $\gamma_r(n)= (\log n)^{C_r}$ and $\tau_r(n,m)=n^{r-\frac{1}{d_r(F)}}(\log n)^{C_r}$.
\end{prop}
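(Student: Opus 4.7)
I would prove the proposition by induction on $r \ge r_0$, with $r = r_0$ being the hypothesis. For the inductive step, assume the conclusion for $r-1 \ge r_0$, so $F^{(r-1)}$ is $(M_{r-1},\gamma_{r-1},\tau_{r-1})$-balanced with the stated shapes. The spanning-tight-tree property of $F^{(r)}$ follows directly from \Cref{lem:spanningTreeInduction} applied with $k=r_0$ and $T'$ the tight $r_0$-tree containing $F$. Since $(F^{(r-1)})^{(r)}=F^{(r)}$, I would then apply \Cref{lemma:BSviaGreedy} with the $(r-1)$-graph $F^{(r-1)}$ playing the role of its ``$F$''. Hypotheses (a)--(c) are straightforward to check: (a) is the inductive hypothesis; (b) is trivial; and for (c), by \Cref{lem:densityRelation} the $n$-exponent of $\tau_{r-1}$ is $r-1-1/d_{r-1}(F^{(r-1)})=r_0-1/d_{r_0}(F)$, which is $\ge 0$ because $F$ being spanning in a tight $r_0$-tree $T$ forces $|F|\ge v(T)/r_0=(|T|+r_0-1)/r_0$ in order to cover every vertex of $T$, and hence $d_{r_0}(F)\ge (|F|-1)/(v(F)-r_0)=(|F|-1)/(|T|-1)\ge 1/r_0$.

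The key choice is the function $A(n,m):=m^{d_r(F^{(r)})}\,n^{1-r\cdot d_r(F^{(r)})}$. With this choice, the first term defining $\tau_r$ in \Cref{lemma:BSviaGreedy} is exactly $A^{-1/d_r(F^{(r)})}\cdot m=n^{r-1/d_r(F^{(r)})}$. By \Cref{lem:densityRelation} the exponents $r-1/d_r(F^{(r)})$, $r_0-1/d_{r_0}(F)$, and $r-1-1/d_{r-1}(F^{(r-1)})$ all coincide, so the second term $\tau_{r-1}(n,m/(AC\log n))\cdot C\log n$ also equals $n^{r-1/d_r(F^{(r)})}(\log n)^{O(1)}$ (the fact that $\tau_{r-1}$ has no $m$-dependence makes this step completely clean). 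Hence $\tau_r(n,m)=n^{r-1/d_r(F^{(r)})}(\log n)^{C_r}$ and $\gamma_r(n)=(\log n)^{C_r}$ for a suitable constant $C_r$ depending only on $C_{r-1}$ and $F$.

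The main obstacle is verifying the two-sided constraint (d) on $A$, namely $mn^{1-r}\le A(n,m)\le m/(M_{r-1}(n)\cdot C\log n)$. The lower bound rearranges to $(n^r/m)^{1-d_r(F^{(r)})}\ge 1$, which is automatic since $m\le n^r$ and $d_r(F^{(r)})<1$ by \Cref{lem:densityRelation} (using $r>r_0$). The upper bound rearranges to $m\ge n^{r-1/(1-d_r(F^{(r)}))}(\log n)^{1/(1-d_r(F^{(r)}))}\cdot O(1)$; since $d_r(F^{(r)})>0$ strictly, we have $r-1/(1-d_r(F^{(r)}))<r-1$, so the $n^{-d_r(F^{(r)})/(1-d_r(F^{(r)}))}$ factor dominates the polylog and the required lower bound on $m$ is asymptotically weaker than $m\ge C_rn^{r-1}$ for $C_r$ large enough in terms of $C_{r-1}$ and $F$. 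Taking $M_r(n)=C_rn^{r-1}$ with such $C_r$ closes the induction.
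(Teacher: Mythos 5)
Your proof is correct and follows essentially the same route as the paper's: reduce to the $r_0=r-1$ step by induction, invoke \Cref{lem:spanningTreeInduction} for the tight-tree claim, apply \Cref{lemma:BSviaGreedy} with $A(n,m)$ chosen so that $A^{-1/d_r(F^{(r)})}m=n^{r-1/d_r(F^{(r)})}$, and use \Cref{lem:densityRelation} to collapse the two terms in $\tau_r$ to the same exponent. The only cosmetic divergences are that the paper defines $A$ as a maximum with $mn^{1-r}$ (making the lower bound of hypothesis (d) automatic by fiat) where you observe it holds for the single term since $m\le n^r$ and $d_r(F^{(r)})<1$, and that you supply an explicit verification of hypothesis (c) (nonnegativity of the $n$-exponent $r_0-1/d_{r_0}(F)$) via the tight-tree spanning condition, whereas the paper simply calls this check straightforward --- note that this nonnegativity also follows immediately from $\Delta(F)\ge2$, which forces $d_{r_0}(F)>1/r_0$ as used in \Cref{lem:generalLowerRandomTuran}.
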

\begin{proof}
We claim that it suffices to prove the result when $r_0=r-1$.  Indeed, assuming this case we prove the general result by induction on $r$, the $r=r_0$ case being trivial.  Assume that we have proven the result up to some $r>r_0$.  Because the result holds at $r-1$, $F^{(r-1)}$ continues to satisfy the hypothesis of the lemma, so the assumed $r-1$ result completes the induction.  With this we may assume that $r_0=r-1$ from now on.  

It is straightforward to check that properties (a), (b), and (c) of  \Cref{lemma:BSviaGreedy} are satisfied, and we have that $F^{(r)}$ is a spanning subgraph of a tight tree due to \Cref{lem:spanningTreeInduction} applied with $k=r_0$.  Let $C$ be the constant guaranteed to exist from \Cref{lemma:BSviaGreedy}.   Let $M_r(n)=Cn^{r-1}$,  let $\gamma_r(n)=\gamma_{r-1}(n)\cdot C\log n= C(\log n)^{C_{r-1}+1}$, and let \[A(n,m)=\max\{m^{d_{r}(F^{(r)})}n^{-rd_{r}(F^{(r)})+1},~ m n^{1-r}\}.\] 
We claim that property (d) of \Cref{lemma:BSviaGreedy} holds.  Indeed, the lower bound $A(n,m)\ge m n^{1-r}$ holds trivially since this is one of the terms in the maximum.   The upper bound $A(n,m)\le \frac{m}{M_{r-1}(n) C\log n}=\frac{m}{C_{r-1} n^{r-2}\cdot C \log n}$ holds for the second term of $A$ provided ${n\choose r-1}\ge C_{r-1} n^{r-2}$, which holds for $n$ sufficiently large, and it holds for the first term provided \[m^{d_{r}(F^{(r)})-1}n^{-rd_{r}(F^{(r)})+1}\le \frac{1}{C_{r-1}\cdot C \log n} n^{2-r}.\] 
Because $d_r(F^{(r)})<1$ by \Cref{lem:densityRelation}, this inequality is hardest to satisfy at the smallest possible value of $ m\ge C n^{r-1}$, and one can verify this holds for $n$ sufficiently large through the inequality $-d_r(F^{(r)})+2-r<2-r$.

Define
$$
\begin{aligned}
\tau_r(n,m)&=\max\l\{A(n,m)^{-\frac{1}{d_r\l(F^{(r)}\r)}}\cdot m,~\tau_{r-1}\l(n,~\frac{m}{A(n,m)\cdot C\log n}\r)\cdot C\log n\r\}\\
&\le n^{r-\frac{1}{d_r\l(F^{(r)}\r)}}\cdot C(\log n)^{C_{r-1}+1},
\end{aligned}
$$
where this last step used $\tau_{r-1}=n^{r-1-\frac{1}{d_{r-1}(F)}} (\log n)^{C_{r-1}}=n^{r-\frac{1}{d_r(F^{(r)}}}(\log n)^{C_{r-1}}$ by \Cref{lem:densityRelation}.  We conclude by \Cref{lemma:BSviaGreedy} that $F^{(r)}$ is $(M_r,\gamma_r,\tau_r)$-balanced.
\end{proof}








\section{General Random Tur\'an Results}\label{sec:Turan}

Here we prove a set of general lower and upper bounds for the random Tur\'an problem, special cases of which have appeared throughout the literature in previous papers.   We begin with proving the simple bounds of \Cref{lem:generalLowerRandomTuran}, which we restate here for the reader's convenience.

\begingroup
\def\thethm{\ref{lem:generalLowerRandomTuran}}
\begin{prop}
    Let $F$ be an $r$-graph with $\Delta(F)\ge 2$.  If $n^{-r}\ll p \ll n^{-\frac{1}{d_r(F)}}$, then a.a.s.
    \[\ex(G_{n,p}^r,F)=(1+o(1))p {n\choose r}.\]
    If $p\gg  n^{-\frac{1}{d_r(F)}}$, then a.a.s.
    \[\ex(G_{n,p}^r,F)\ge  \max\{\Omega(p\cdot \ex(n,F)),n^{r-\frac{1}{d_r(F)}}(\log n)^{-1}\}.\]
\end{prop}
\endgroup

\begin{proof}
    For the upper bound, we observe that trivially $\ex(G_{n,p}^r,F)\le |G_{n,p}^r|$, and this is at most $(1+o(1))p{n\choose r}$ a.a.s.\ provided $p\gg n^{-r}$ by the Chernoff bound. 
	
    For the first lower bound, let $F'\sub F$ be such that $d_r(F)=\f{|F'|-1}{v(F')-r}$, and let $X$ be the number of copies of $F'$ in $G_{n,p}^r$.  Note that $\E[X]=\Theta(p^{|F'|} n^{v(F')})=o(p {n\choose r})$ by our range of $p$.  As such we have
    $\omega:=\f{p{n\choose r}}{\E[X]}\to \infty.$  
    Choose $\ep=\ep(n)$ any function such that $\ep\to 0$ and $\ep \omega\to \infty$.  By Markov's inequality,
	\[\Pr\left[X \ge \ep p {n\choose r}\right]\le \rec{\ep \omega}\to 0.\]
	Thus a.a.s.\ $X=o(p{n\choose r})$.  By the Chernoff bound, $|G_{n,p}^r|\ge (1-o(1))p{n\choose r}$ a.a.s.\ for $p\gg n^{-r}$.  Thus by deleting an edge from each of the $X$ copies of $F$ in $G_{n,p}^r$, we obtain an $F$-free subgraph of $G_{n,p}^r$ with $(1-o(1))p{n\choose r}$ edges a.a.s.

    For the remaining lower bounds, we observe that $\Delta(F)\ge 2$ implies (and is in fact equivalent to) $d_r(F)>1/r$ by considering $F'\sub F$ consisting of two intersecting edges.  The lower bound $n^{r-\frac{1}{d_r(F)}}(\log n)^{-1}$ then follows from the monotonicity of $\ex(G_{n,p}^r,F)$ with respect to $p$: more precisely, taking $q:=n^{-\frac{1}{d_r(F)}}(\log\log n)^{-1}$, we see that $n^{-r}\ll q\ll n^{-\frac{1}{d_r(F)}}$ (because $d_r(F)>1/r$), and hence the bound above implies that a.a.s.\ $\ex(G_{n,q}^r,F)\ge \frac{1}{2} q{n\choose r}\ge n^{r-\frac{1}{d_r(F)}}(\log n)^{-1}$, and hence this same lower bound continues to hold a.a.s.\ for $\ex(G_{n,p}^r,F)$ for any $p\ge q$.
    
    Finally, the subgraph $G\sub G_{n,p}^r$ obtained by intersecting $G_{n,p}^r$ with an $F$-free subgraph with $\ex(n,F)$ edges is a binomial random variable with mean $p\cdot \ex(n,F)$, so this will be at least $\Omega(p\cdot \ex(n,F))$ a.a.s.\ provided $p\cdot \ex(n,F)\to \infty$.  And indeed, due to the maximum in the lower bound we only need to show this in the case when $p\cdot \ex(n,F)\ge n^{r-\frac{1}{d_r(F)}}(\log n)^{-1}\to \infty$ since $d_r(F)>1/r$.
\end{proof}

We now move onto our upper bound for the random Tur\'an problem \Cref{Lemma:General Random Turan}.  Our proof uses the hypergaph container method, developed independently by Balogh--Morris--Samotij~\cite{balogh2015independent} and Saxton--Thomason~\cite{saxton2015hypergraph}, which is fundamental in the study of random Turán problems.  Here we make use of the following simplified version of  Theorem 1.5 in~\cite{morris2024asymmetric}:
\begin{thm}[\cite{morris2024asymmetric}]\label{theorem:container}
For every integer $k \ge 2$, there exists a constant $\epsilon>0$ such that the following holds.
Let $B,L\ge 1$ be positive integers and $\c{H}$ a $k$-graph satisfying
    \begin{equation}\label{equation:container}
        \Delta_j(\c{H})\leq \l(\frac{B}{v(\c{H})}\r)^{j-1}\frac{|\c{H}|}{L},~~\forall 1\le j\le  k.
    \end{equation}
    
     Then there exists a collection $\mathcal{C}$ of subsets of $V(\c{H})$ such that:
    \begin{enumerate}
        \item[(i)] For every independent set $I$ of $\c{H}$, there exists $S\in \mathcal{C}$ such that $I\subset S$;
        \item[(ii)] For every $S\in\mathcal{C}$, $|S|\leq v(\c{H})-\epsilon L$;
        \item[(iii)] We have \[|\mathcal{C}|\le \exp\l(\frac{\log\l(v(\c{H})\r)B}{\epsilon}\r).\]
    \end{enumerate}
\end{thm}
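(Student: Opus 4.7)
The plan is to design a deterministic algorithm that assigns to each independent set $I$ of $\c{H}$ a pair $(S(I), C(I))$, where $S(I) \subseteq I$ is a small ``fingerprint'' and $C(I) \supseteq I$ is a container determined only by $S(I)$. Taking $\mathcal{C}$ to be the collection of all containers produced as $S$ ranges over the possible fingerprints automatically gives (i), and bounding $|\mathcal{C}|$ reduces to counting fingerprints.

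First I would fix a canonical ordering of $V(\c{H})$ and run a ``scythe'' or ``greedy deletion'' algorithm in the spirit of the original Balogh--Morris--Samotij and Saxton--Thomason proofs. Starting with an active set $A_0 = V(\c{H})$, at each step $i$ I consider a vertex $v_i$ chosen according to the hypergraph structure inside $A_{i-1}$ (e.g., via a local max-degree rule applied to the link of the current $S$), and add $v_i$ to $S$ exactly when $v_i \in I$. Because $I$ is independent, the link hypergraph of $S$ inside $A_{i-1}$ provides witnesses that allow removing a substantial set of vertices from $A_{i-1}$ to form $A_i$ without ever discarding a vertex of $I$; every removed vertex is then forbidden from $C(I)$.

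The quantitative heart of the argument has three pieces: (1) using the bounds in (\ref{equation:container}) to show that each fingerprint vertex forces the deletion of $\Omega(\epsilon L/B)$ further vertices from $A$, so that the algorithm terminates with $|S| = O(B)$; (2) verifying that when no further progress is possible, the terminal set $A$ already omits at least $\epsilon L$ vertices of $V(\c{H})$, yielding the container size bound (ii); (3) enumerating all possible fingerprints by $\binom{v(\c{H})}{\le O(B/\epsilon)} \le \exp(O(B\log v(\c{H})/\epsilon))$, giving (iii).

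The main obstacle will be calibrating the scythe so that $B$, $L$, and $\epsilon$ simultaneously play their roles across all $1 \le j \le k$. The degree bounds in (\ref{equation:container}) become progressively weaker as $j$ grows, so one must ensure that fingerprint vertices chosen on the basis of low-order codegrees still produce enough witnesses when their role in higher-order edges is invoked, without double-counting edges shared among fingerprint elements. This delicate bookkeeping, rather than any single conceptual leap, is where I expect the technical difficulty to concentrate, and it is precisely the place where the asymmetric framework of \cite{morris2024asymmetric} refines earlier container theorems, replacing the uniform ``max-degree'' rule by a more flexible selection adapted to the level $j$ at which a given edge contributes.
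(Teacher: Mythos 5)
The paper does not prove \Cref{theorem:container}; it is quoted as a simplified form of Theorem~1.5 from the cited reference \cite{morris2024asymmetric} and then invoked as a black box inside the proof of \Cref{Lemma:General Random Turan}. There is therefore no in-paper proof to compare your proposal against.

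Taken on its own terms, your sketch correctly identifies the standard container strategy (fingerprint/scythe algorithm, fingerprints determine containers, count fingerprints), but it is an outline rather than a proof. The three ``quantitative pieces'' you enumerate are stated as goals, not established: in particular, piece~(1) --- that each fingerprint vertex forces $\Omega(\epsilon L/B)$ deletions from the active set so that $|S|=O(B)$ --- is exactly the crux of the container theorem, where the codegree bounds in~\eqref{equation:container} must enter via a careful charging argument across all uniformities $1\le j\le k$, and this is left entirely undone. You say as much yourself in the final paragraph, acknowledging the ``delicate bookkeeping'' without carrying it out; but that bookkeeping \emph{is} the proof, not a peripheral detail. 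One further small correction: the ``asymmetric'' in \cite{morris2024asymmetric} does not refer to a level-$j$-adapted vertex selection rule. It refers to a container lemma for a pair of hypergraphs on the same vertex set (arising, e.g., from edges vs.\ non-edges in a graph), which is a different axis of generalization; the simplified Theorem~1.5 quoted here is structurally close to the ordinary (symmetric) Saxton--Thomason container theorem, and your description misattributes where the technical novelty of that reference lies.
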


We now proving \Cref{Lemma:General Random Turan}, which we restate below for convenience.

\begingroup
\def\thethm{\ref{Lemma:General Random Turan}}
\begin{prop}
Let $F$ be an $r$-graph with $r,|F|\ge 2$.  If there exist $C>0$ and positive functions $M=M(n)$ and $\tau=\tau(n,m)$ such that 
\begin{itemize}
    \item[(a)] $F$ is $(M,(\log n)^{C}, \tau)$-balanced, and
    \item[(b)] For all sufficiently large $n$ and $m\ge M(n)$, the function $\tau(n,m)$ is non-increasing with respect to $m$ and satisfies $\tau(n,m)\ge 1$, 
\end{itemize}
then there exists $C'\ge 0$ such that for all sufficiently large $n$, $m\ge M(n)$, and $0<p\le 1$ with $pm\rightarrow\infty$ as $n\rightarrow \infty$, we have a.a.s.
$$
\ex(G^r_{n,p},F)\le \max\l\{C'pm,\tau(n,m)(\log n)^{C'}\r\}.
$$
\end{prop}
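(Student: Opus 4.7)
The plan is to apply the iterative hypergraph container method. The goal is to construct a family $\mathcal{S}$ of subsets of $\binom{[n]}{r}$ such that (i) every $F$-free $r$-graph on $[n]$ is a subgraph of some $S \in \mathcal{S}$, (ii) each $S \in \mathcal{S}$ satisfies $|S| \le m$, and (iii) $\log|\mathcal{S}| \le K\tau(n,m)(\log n)^{C+2}$ for some constant $K$ depending only on $F$ and $r$. Once such an $\mathcal{S}$ is in hand, the desired upper bound on $\ex(G_{n,p}^r, F)$ will follow from a union bound combined with the Chernoff inequality.

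To build $\mathcal{S}$, I will iterate the following refinement, starting from $\mathcal{S}_0 = \bigl\{\binom{[n]}{r}\bigr\}$. Given a current container $S$ with $|S| = m' > m$, view $S$ as the edge set of an $r$-graph $H$ on $[n]$; since $m' \ge m \ge M(n)$, hypothesis (a) yields a collection $\c{H}$ of copies of $F$ in $H$ satisfying the balancedness bound with parameters $((\log n)^C, \tau)$. I will then apply \Cref{theorem:container} to $\c{H}$, viewed as an $|F|$-uniform hypergraph on $V(\c{H}) = E(H)$ so that $v(\c{H}) = m'$, with parameters $L := m'/(\log n)^C$ and $B := \tau(n,m)$. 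A direct check shows that \eqref{equation:container} holds: the $j = 1$ case reduces to $L(\log n)^C \le m'$ (tight), and each $j \ge 2$ case reduces to $B \ge \tau(n,m')$, which follows from $\tau(n,m') \le \tau(n,m)$ by hypothesis (b). Hence each refinement step replaces $S$ by a collection of at most $\exp\bigl(O(\log n \cdot \tau(n,m))\bigr)$ subsets of $S$, each of size at most $m' - \epsilon L = m'(1 - \epsilon/(\log n)^C)$, such that every $F$-free subgraph of $H$ is contained in one of them. Since $\log(\binom{n}{r}/m) = O(\log n)$, after $T = O((\log n)^{C+1})$ iterations every container has size at most $m$, and multiplying the per-iteration multiplier yields (iii).

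For the probabilistic step, set $t := \max\{C'pm, \tau(n,m)(\log n)^{C'}\}$. Every $F$-free subgraph of $G_{n,p}^r$ is contained in $G_{n,p}^r \cap S$ for some $S \in \mathcal{S}$, so by a union bound
\[
\Pr\bigl[\ex(G_{n,p}^r,F) > t\bigr] \le |\mathcal{S}| \cdot \max_{S \in \mathcal{S}} \Pr\bigl[|G_{n,p}^r \cap S| > t\bigr].
\]
Since $|S| \le m$, the second factor is at most $\Pr[\mathrm{Bin}(m,p) \ge t]$. Provided $C'$ is taken large enough that $C'pm \ge 2pm$ and $C' > C + 2$, the Chernoff inequality gives $\Pr[\mathrm{Bin}(m,p) \ge t] \le \exp(-ct)$ for some $c > 0$. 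Combined with the lower bounds $t \ge \tau(n,m)(\log n)^{C'} \ge \tau(n,m)(\log n)^{C+3}$ (using $\tau \ge 1$ from hypothesis (b)) and $t \ge C'pm \to \infty$, the right-hand side tends to $0$.

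The main bookkeeping obstacle lies in verifying \eqref{equation:container} and in tracking how the per-iteration factor $\exp(O(\log n \cdot \tau(n,m')))$ depends on the current container size $m'$, which varies from $\binom{n}{r}$ down to $m$ across iterations. Both points are handled by the monotonicity of $\tau$ in its second argument (hypothesis (b)), which allows me to replace $\tau(n,m')$ by $\tau(n,m)$ uniformly across all iterations; the only other subtlety is ensuring the balanced supersaturation hypothesis applies at every step, which holds because $m' > m \ge M(n)$ throughout.
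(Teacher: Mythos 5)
Your proposal is correct and essentially the same as the paper's proof. You run the same iterative container refinement starting from the complete $r$-graph, apply \Cref{theorem:container} with $L = m'(\log n)^{-C}$ at each step, and conclude with a first-moment bound; the only cosmetic differences are that you set $B = \tau(n,m)$ uniformly from the start rather than $B = \tau(n,m')$ per-iteration (which amounts to invoking the monotonicity of $\tau$ slightly earlier), and that you phrase the final step as union bound plus Chernoff where the paper computes $\E[X_E] \le |\mathcal{C}|\binom{m}{E}p^E$ and applies Markov.
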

\addtocounter{thm}{-1}
\endgroup

\begin{proof}
Let $n$ be sufficiently large and $m\ge M(n)$.  Note that by \Cref{prop:optimalBSBound} we have $m\ge \ex(n,F)=\Omega(n)$ since $r,|F|\ge 2$, and in particular for $n$ sufficiently large we have
\begin{equation}m\ge (\log n)^{C}.\label{eq:dumbMBound}\end{equation}
We will take $[n]=\{1,\dots,n\}$ to be the vertex set of all the $r$-graphs discussed below. We first prove the following claim by repeatedly applying Theorem~\ref{theorem:container}.

\begin{claim}\label{claim:container}
There exist a collection $\C$ of $r$-graphs on $[n]$ and a constant $\tilde{C}$ (independent of $n$ and $m$) such that
\begin{itemize}
    \item[(i)] Every $F$-free $r$-graph on $[n]$ is contained in some $S\in\C$;
    \item [(ii)] $|S|\le m$ for every $S\in\C$;
    \item [(iii)] $|\C|\le \exp\l((\log n)^{\tilde{C}}\tau(n,m)\r)$.
\end{itemize}
\end{claim}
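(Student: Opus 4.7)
The plan is to apply the hypergraph container theorem (\Cref{theorem:container}) iteratively, at each stage using hypothesis (a) to furnish the balanced copy-hypergraph required as input, until every surviving container has at most $m$ edges.

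First, the trivial case: if $\binom{n}{r}\le m$, then $\C=\{\binom{[n]}{r}\}$ works. Otherwise, I describe one container step. For any $r$-graph $S$ on $[n]$ with $|S|\ge M(n)$, hypothesis (a) supplies a collection $\c{H}_S$ of copies of $F$ in $S$, which I view as an $|F|$-uniform hypergraph on vertex set $E(S)$ (so $v(\c{H}_S)=|S|$), satisfying
\[
\Delta_j(\c{H}_S)\le \frac{(\log n)^C|\c{H}_S|}{|S|}\left(\frac{\tau(n,|S|)}{|S|}\right)^{j-1}.
\]
This matches \eqref{equation:container} with $L=|S|/(\log n)^C$ and $B=\tau(n,|S|)$; both are $\ge 1$ using (b) and the fact that $|S|\ge M(n)\ge (\log n)^C$ for large $n$ (note \Cref{prop:optimalBSBound} forces $M(n)>\ex(n,F)=\Omega(n)$ since $r,|F|\ge 2$, so \eqref{eq:dumbMBound} holds). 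Since every $F$-free $r$-graph contained in $S$ is an independent set in $\c{H}_S$, \Cref{theorem:container} yields a family of at most $\exp((\log|S|)\tau(n,|S|)/\epsilon)$ subcontainers, each of size at most $|S|(1-\epsilon/(\log n)^C)$, together covering every $F$-free subgraph of $S$.

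Now iterate. Set $\C_0=\{\binom{[n]}{r}\}$, and for $t\ge 0$ form $\C_{t+1}$ by applying the container step to every $S\in\C_t$ with $|S|>m$ (leaving the rest alone). Because the geometric factor $(1-\epsilon/(\log n)^C)^T$ drops $\binom{n}{r}$ below $m$ after $T=O((\log n)^{C+1})$ rounds, the process terminates by round $T$. Throughout, every container to which we apply the step has $|S|>m\ge M(n)$, so by the non-increasing property in (b) we have $\tau(n,|S|)\le \tau(n,m)$, and each round multiplies the container count by at most $\exp(O((\log n)\tau(n,m)))$. Hence
\[
|\C_T|\le \exp\!\left(O(T\cdot(\log n)\tau(n,m))\right)\le \exp\!\left((\log n)^{\tilde C}\tau(n,m)\right)
\]
for a sufficiently large constant $\tilde C$, giving (iii). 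Property (ii) is immediate from termination, and (i) follows from iterating property (i) of \Cref{theorem:container}: an $F$-free $r$-graph on $[n]$ avoids every copy of $F$, so it remains independent in each auxiliary $\c{H}_S$ and is covered by some $S\in\C_t$ for every $t$, hence by some $S\in\C_T$ with $|S|\le m$.

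The main obstacle is the bookkeeping, specifically ensuring three things hold uniformly across all rounds: (1) every container we apply the step to still satisfies $|S|\ge M(n)$ so that balanced supersaturation applies; (2) $\tau(n,|S|)$ is controlled by $\tau(n,m)$ via the monotonicity in (b); and (3) the number of rounds $T$ is only polylogarithmic in $n$, so that the accumulated factor $T\cdot\log n$ can be absorbed inside $(\log n)^{\tilde C}$ in the exponent. The assumption $\tau\ge 1$ from (b) is exactly what guarantees $B\ge 1$ in the container theorem and what lets the polylogarithmic factor sit multiplicatively against $\tau(n,m)$ in the final exponent rather than as an additive term.
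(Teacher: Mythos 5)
Your proof is correct and follows essentially the same approach as the paper: iterative application of \Cref{theorem:container}, starting from $K_n^r$, using hypothesis (a) at each step to supply the $(\log n)^C$-spread copy-hypergraph, using the non-increasing property in (b) to uniformly control $\tau(n,|S|)$ by $\tau(n,m)$ across all containers with $|S|>m$, bounding the number of rounds by a polylogarithm via the geometric shrinkage $(1-\epsilon(\log n)^{-C})$, and absorbing the round-count and per-round factor into $(\log n)^{\tilde C}$ in the final exponent. The paper phrases this as explicitly partitioning each $\C_{i-1}$ into the finished pieces $\C_{i-1}^{\le m}$ and the active pieces $\C_{i-1}^{>m}$ and maintaining the inductive size condition $|\C_i|\le\exp(r\log n\cdot\tau(n,m)/\epsilon)^i$, but this is cosmetic; the substance matches.
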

\begin{proof}
We will iteratively define collections $\C_i$ satisfying (i) and the size condition $|\C_i|\le  \exp\l(\frac{r\log n\cdot\tau(n,m')}{\epsilon}\r)^i$ for some $\epsilon$ depending only on $F$ as follows.  We start with $\C_0=\{K^r_n\}$, where $K^r_n$ is the complete $r$-graph on $[n]$ and note that this collection satisfies (i) and $|\C_0|=1$. Given a collection $\C_{i-1}$ for  $i\ge 1$ which satisfies (i) and the size condition, if $\C_{i-1}$ satisfies (ii) then we stop and set $\C=\C_{i-1}$, and otherwise we construct a new collection $\C_i$ as following. Let
$$\C_{i-1}^{\le m}=\{G\in\C_{i-1}:|G|\le m\}~~\text{and}~~\C_{i-1}^{> m}=\{G\in\C_{i-1}:|G|>m\}.$$
For each $G\in \C_{i-1}^{>m}$, we construct a collection $\C_{G}$ of $r$-graphs on $[n]$ as follows.  Let $m'=|G|> m$.   By (a), there exists a collection $\c{H}$ of copies of $F$ in $G$ such that, $\forall 1\le i\le |F|$,
$$
\Delta_i(\c{H})\le \frac{(\log n)^{C}|\c{H}|}{m'}\l(\frac{\tau(n,m')}{m'}\r)^{i-1}.
$$
By applying Theorem~\ref{theorem:container} with $B=\tau(n,m')\ge 1$, and $L=m'(\log n)^{-C}\ge 1$ (here we use \eqref{eq:dumbMBound} and $m'>m$), we obtain a collection $\C_{G}$ of $r$-graphs on $[n]$ (i.e. subsets of ${[n]\choose r}$) such that
\begin{itemize}
    \item[(i')] every $F$-free subgraph of $G$ is contained in some $G'\in\C_{G}$;
    \item[(ii')] $|G'|\le \l(1-\epsilon(\log n)^{-C}\r)m'$ for every $G'\in\C_{G}$;
    \item [(iii')] $|\C_{G}|\le \exp\l(\frac{r\log n\cdot\tau(n,m')}{\epsilon}\r)$.
\end{itemize}
Here $\epsilon$ is a constant depending only on $|F|$. Having constructed $\C_{G}$ for each $G\in \C_{i-1}^{>m}$, we finally define
$$
\C_i=\C_{i-1}^{\le m}\cup\bigcup_{G\in\C_{i-1}^{>m}}\C_{G}.
$$
Note that since we inductively assumed (i) held for $\C_{i-1}$, property (i') implies that $\C_i$ also satisfies (i).  Similarly (iii') and the size condition on $\C_{i-1}$ implies $|\C_i|\le  \exp\l(\frac{r\log n\cdot\tau(n,m')}{\epsilon}\r)^i$ as desired.

Let $t$ be such that $\C=\C_t$ (we implicitly show such a $t$ exists below).  Note that (ii) holds by construction of $\C$, and (i) holds for $\C_t=\C$, so it so it remains to check (iii).  Observe that by (ii'), we have for every $G\in \C_i$ that 
$$
|G|\le \max\l\{m, \l(1-\epsilon(\log n)^{-C}\r)^i\binom{n}{r}\r\}.
$$
Note that when $i\ge (\log n)^{\tilde{C}'}$ for some sufficiently large constant $\tilde{C}'$, we have
$$
\l(1-\epsilon(\log n)^{-C}\r)^i\binom{n}{r}<m.
$$
Thus we have $t\le (\log n)^{\tilde{C}'}$, and hence by the size condition
\[|\C|=|\C_t|\le  \exp\l(\frac{r\log n\cdot\tau(n,m')}{\epsilon}\r)^{(\log n)^{\tilde{C}'}}\le  \exp\l((\log n)^{\tilde{C}}\tau(n,m)\r),\]
where $\tilde{C}$ is some suitably large constant, completing the proof of the claim.
\end{proof}

Let $E=\max\l\{C'pm,\tau(n,m)(\log n)^{C'}\r\}$ where $C'$ is a sufficiently large constant, and let $X_E$ denote the number of $F$-free $E$-edge subgraphs of $G^r_{n,p}$. By Claim~\ref{claim:container} and linearity of expectation, we have
$$
\E[X_E]\le |\C|\binom{m}{E}p^E\le \exp\l((\log n)^{C'''}\tau(n,m)+\log\l(\frac{emp}{E}\r)\cdot E\r)\le \exp(-E).
$$
Since $E\ge C_2pm\rightarrow\infty$ as $n\rightarrow\infty$, by Markov's inequality, we have
$\Pr[X_E\ge 1]\le \E[X_E]\rightarrow 0$ as $n\rightarrow\infty$. Therefore, a.a.s.
$$
\ex(G^r_{n,p},F)\le E=\max\l\{(\log n)^{C_2}\tau(n,m),~C_2pm\r\}.
$$
\end{proof}
Combining our results here gives a criteria to establish tight bounds for random Tur\'an problems.
\begin{cor}\label{cor:randomTuranBounds}
Let $F$ be an $r$-graph with $\Delta(F)\ge 2$ such that $F$ is $(M,\gamma,\tau)$-balanced with $M(n)=\Theta(\ex(n,F))$, $\gamma=(\log n)^{\Theta(1)}$ and $\tau=n^{r-\frac{1}{d_r(F)}}(\log n)^{\Theta(1)}$. Then
$$\ex(G^r_{n,p}, F)=
    \l\{
    \begin{aligned}
        & \max\{\Theta(p \cdot \ex(n,F)),n^{r-\frac{1}{d_r(F)}}(\log n)^{\Theta(1)}\},~~&\t{if}~p\gg n^{-\frac{1}{d_r(F)}}\\
        &(1+o(1))p\binom{n}{r},~~&\t{if}~ n^{-r}\ll p\ll  n^{-\frac{1}{d_r(F)}}.\\
    \end{aligned}
    \r. $$
\end{cor}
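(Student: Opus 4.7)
The plan is to derive this corollary by combining the lower bounds of \Cref{lem:generalLowerRandomTuran} with the container-based upper bound of \Cref{Lemma:General Random Turan}. Under the stated $(M,\gamma,\tau)$-balancedness of $F$, essentially all the content is already in place and only the hypotheses of \Cref{Lemma:General Random Turan} need to be verified.

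For the sparse regime $n^{-r}\ll p\ll n^{-1/d_r(F)}$, the first assertion of \Cref{lem:generalLowerRandomTuran} immediately gives the lower bound $(1-o(1))p\binom{n}{r}$, while the trivial inequality $\ex(G^r_{n,p},F)\le |G^r_{n,p}|$ combined with a Chernoff bound on $|G^r_{n,p}|$ yields the matching a.a.s.\ upper bound.

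For the dense regime $p\gg n^{-1/d_r(F)}$, the lower bound $\max\{\Omega(p\cdot\ex(n,F)),\,n^{r-1/d_r(F)}(\log n)^{-1}\}$ is exactly the second assertion of \Cref{lem:generalLowerRandomTuran}. For the matching upper bound I would invoke \Cref{Lemma:General Random Turan}. Hypothesis (a) holds with any constant $C$ large enough that $(\log n)^C\ge\gamma(n)$, which exists because $\gamma=(\log n)^{\Theta(1)}$ and balancedness is monotone in $\gamma$. Hypothesis (b) is automatic: our $\tau(n,m)$ is independent of $m$, hence non-increasing in $m$, and $\tau(n,m)\ge 1$ for large $n$ since $\Delta(F)\ge 2$ forces $d_r(F)>1/r$ (as noted in the proof of \Cref{lem:generalLowerRandomTuran}) and hence $n^{r-1/d_r(F)}\to\infty$. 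To meet the remaining hypothesis $pm\to\infty$, I would take $m:=\max\{M(n),\,\lceil p^{-1}\log n\rceil\}$, guaranteeing both $m\ge M(n)$ and $pm\ge\log n\to\infty$. Since $\tau(n,m)$ does not depend on $m$, the conclusion becomes
\[\ex(G^r_{n,p},F)\le \max\{O(pm),\,n^{r-1/d_r(F)}(\log n)^{O(1)}\}\quad\text{a.a.s.},\]
and a quick case split—on whether $p\cdot\ex(n,F)\to\infty$ (so that $pm=\Theta(p\cdot\ex(n,F))$) or $p\cdot\ex(n,F)=O(1)$ (so that $pm=O(\log n)$ is absorbed into the second term)—shows that this matches the stated bound $\max\{O(p\cdot\ex(n,F)),\,n^{r-1/d_r(F)}(\log n)^{O(1)}\}$.

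Since \Cref{lem:generalLowerRandomTuran} and \Cref{Lemma:General Random Turan} carry all the real content, there is no substantive obstacle; the only mild subtleties are the auxiliary choice of $m$ needed to guarantee $pm\to\infty$ uniformly in $p$, and the verification that $\tau\ge 1$, both of which ultimately rest on the hypothesis $\Delta(F)\ge 2$.
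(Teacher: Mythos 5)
Your proof is correct and follows exactly the route taken in the paper, which simply invokes \Cref{lem:generalLowerRandomTuran} for the lower bounds and \Cref{Lemma:General Random Turan} for the dense upper bound. Your choice of $m=\max\{M(n),\lceil p^{-1}\log n\rceil\}$ (rather than the paper's $m=M(n)$) is a sensible extra precaution to guarantee $pm\to\infty$ even when $p$ is just barely above $n^{-1/d_r(F)}$, where $pM(n)$ need not tend to infinity; it does not change the substance of the argument.
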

\begin{proof}
The upper bound comes from applying \Cref{Lemma:General Random Turan} with $m=M(n)$, and the lower bound comes from \Cref{lem:generalLowerRandomTuran}.
\end{proof}

We now prove \Cref{thm:optimalBalanced} which we recall below.

\begingroup
\def\thethm{\ref{thm:optimalBalanced}}
\begin{thm}
Let $F$ be an $r_0$-graph with $2\le \Delta(F)<|F|$ which is the spanning subgraph of a tight $r_0$-tree. 
 If there exists a constant $C_{r_0}$ such that $F$ is $(M_{r_0},\gamma_{r_0},\tau_{r_0})$-balanced where $M_{r_0}(n)=C_{r_0}n^{r_0-1}$, $\gamma_{r_0}(n)= (\log n)^{C_{r_0}}$ and $\tau_{r_0}(n,m)= n^{r_0-\frac{1}{d_{r_0}(F)}}(\log n)^{C_{r_0}}$; then for all $r\ge r_0$, we have
     $$
    \ex(G^r_{n,p}, F^{(r)})=
    \l\{
    \begin{aligned}
        & \max\{\Theta(pn^{r-1}),n^{r-\frac{1}{d_r(F^{(r)})}}(\log n)^{\Theta(1)}\},~~&\t{if}~p\gg n^{-\frac{1}{d_r(F^{(r)})}}\\
        &(1+o(1))p\binom{n}{r},~~&\t{if}~ n^{-r}\ll p\ll  n^{-\frac{1}{d_r(F^{(r)})}}.\\
    \end{aligned}
    \r.
    $$
\end{thm}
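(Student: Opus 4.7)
The plan is to deduce this theorem by combining two results already established in the paper: Proposition~\ref{prop:optimalBSExpansions} to transfer optimal balanced supersaturation from $F$ to $F^{(r)}$, and Corollary~\ref{cor:randomTuranBounds} to convert that supersaturation into the desired random Tur\'an bounds. The only genuinely new ingredient will be a short combinatorial observation identifying where the hypothesis $\Delta(F)<|F|$ is used.

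First I would apply Proposition~\ref{prop:optimalBSExpansions} directly: its hypotheses coincide with those of the theorem (namely, that $F$ is the spanning subgraph of a tight $r_0$-tree with $\Delta(F)\ge 2$, and that $F$ has the specified optimal balanced supersaturation parameters). This yields, for every $r\ge r_0$, a constant $C_r$ so that $F^{(r)}$ is $(M_r,\gamma_r,\tau_r)$-balanced with $M_r(n)=C_r n^{r-1}$, $\gamma_r(n)=(\log n)^{C_r}$, and $\tau_r(n,m)=n^{r-1/d_r(F^{(r)})}(\log n)^{C_r}$.

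Next I need to verify the hypothesis $M_r(n)=\Theta(\ex(n,F^{(r)}))$ of Corollary~\ref{cor:randomTuranBounds}. The upper bound $\ex(n,F^{(r)})<M_r(n)=C_r n^{r-1}$ is immediate from Proposition~\ref{prop:optimalBSBound}. For the matching lower bound I would use the extra hypothesis $\Delta(F)<|F|$: since each edge of $F^{(r)}$ consists of an edge of $F$ together with $r-r_0$ fresh vertices unique to that edge, any vertex contained in every edge of $F^{(r)}$ must already lie in $V(F)$ and lie in every edge of $F$, which $\Delta(F)<|F|$ forbids. Hence $F^{(r)}$ does not embed into any $r$-uniform star, so the star with $\binom{n-1}{r-1}$ edges is $F^{(r)}$-free, giving $\ex(n,F^{(r)})=\Omega(n^{r-1})$, and combined with the upper bound this yields $\ex(n,F^{(r)})=\Theta(n^{r-1})$.

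Finally, since $\Delta(F^{(r)})=\Delta(F)\ge 2$, Corollary~\ref{cor:randomTuranBounds} applies to $F^{(r)}$ with $d_r=d_r(F^{(r)})$ and produces exactly the stated dichotomy, with the $\Theta(p\cdot\ex(n,F^{(r)}))$ term being $\Theta(pn^{r-1})$ by the preceding paragraph. I do not expect a genuine obstacle: the Lifting Theorem (Proposition~\ref{prop:optimalBSExpansions}) does all the technical work, and the only new step is the routine star argument which is exactly where the assumption $\Delta(F)<|F|$ is consumed.
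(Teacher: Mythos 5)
Your proposal is correct and follows essentially the same route as the paper's own proof: apply Proposition~\ref{prop:optimalBSExpansions} to get optimal balanced supersaturation for $F^{(r)}$, establish $\ex(n,F^{(r)})=\Theta(n^{r-1})$ via the star construction (using $\Delta(F)<|F|$) for the lower bound and Proposition~\ref{prop:optimalBSBound} for the upper bound, then invoke Corollary~\ref{cor:randomTuranBounds}. Your explicit appeal to Proposition~\ref{prop:optimalBSBound} for the upper bound on $\ex(n,F^{(r)})$ is in fact a touch cleaner than the paper's phrasing, which attributes this step to Proposition~\ref{prop:optimalBSExpansions}.
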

\addtocounter{thm}{-1}
\endgroup

\begin{proof}
    By \Cref{prop:optimalBSExpansions} (whose hypothesis is exactly the same as that of \Cref{thm:optimalBalanced}), we find for all $r\ge r_0$ that $F^{(r)}$ is $(M_r,\gamma_r,\tau_r)$-balanced with $M(n)=\Theta(n^{r-1})$, $\gamma=(\log n)^{\Theta(1)}$ and $\tau=n^{r-\frac{1}{d_r(F)}}(\log n)^{\Theta(1)}$.  
    
    We claim that $\ex(n,F^{(r)})=\Theta(n^{r-1})$.  Indeed, the lower bound comes from a star (i.e.\ every edge containing a given vertex $v\in [n]$), which is $F$-free since $\Delta(F)<|F|$ (meaning no vertex of $F^{(r)}$ is contained in every edge of $F^{(r)}$); and the upper bound comes from $M(n)=O(n^{r-1})$ and \Cref{prop:optimalBSExpansions}.  We conclude the result by \Cref{cor:randomTuranBounds}.
\end{proof}

\section{Proofs of Main Results}\label{sec:applying}
We now apply our Lifting Theorems to prove our results.  We begin by proving our main result \Cref{thm:strongExpansion} for large expansions in \Cref{sub:strongExpansion}, after which we give short proofs of Theorems~\ref{theorem:K2t_RT} and \ref{theorem:theta_RT} giving tight bounds for the random Tur\'an problem for expansions of $K_{2,t}$ and theta graphs $\theta_{a,b}$ for smaller values of $r$ in \Cref{sub:short}.  The details of the related result \Cref{thm:Kst} for $K_{s,t}^{(r)}$ with general $s$ is deferred to an arXiv-only appendix as the computations are cumbersome and we plan to prove even stronger bounds for $s\ge 3$ in forthcoming work \cite{nie202X}.

\subsection{Proof of \Cref{thm:strongExpansion}}\label{sub:strongExpansion}

The statement of \Cref{thm:strongExpansion} involves Sidorenko hypergraphs, for which we make a related definition.
\begin{defn}
    We say that a hypergraph is $(C,\al)$-Sidorenko if every $n$-vertex $r$-graph $H$ with $|H|\ge C n^{\al}$ contains at least $C^{-1} (|H| n^{-r})^{|F|}n^{v(F)}$ copies of $F$.
\end{defn}
It is well known that being $(C,\al)$-Sidorenko for some $\al<r$ is equivalent to being Sidorenko, though for our purposes, we will only need the following easy-to-prove direction of this relationship.
\begin{lem}\label{lem:Sidorenko_to_Supersaturation}
    If $F$ is a non-empty $r$-graph which is Sidorenko, then there exists some $C>0$ such that $F$ is $(C,r-\frac{1}{|F|})$-Sidorenko.
\end{lem}
\begin{proof}
    Let $C$ be a sufficiently large constant and let $H$ be an $r$-graph with $|H|\ge Cn^{r-1/|F|}$.  Because $F$ is Sidorenko, the number of homomorphisms from $F$ to $H$ is at least $(|H|n^{-r})^{|F|}n^{v(F)}\ge C^{|F|}n^{v(F)-1}$. 
 Since the number of non-injective homomorphisms from $F$ to $H$ is trivially at most $O(n^{v(F)-1})$, we conclude that, given $C$ is sufficiently large, at least a half of the homomorphisms from $F$ to $H$ are injective, i.e.\ there are at least $\half (|H|n^{-r})^{|F|}n^{v(F)}$ genuine copies of $F$ in $H$. 
 Thus taking $C$ sufficiently large gives the result.
\end{proof}

We next state a hypergraph analog of a balanced supersaturation result for general graphs due to Jiang and Longbrake~\cite[Theorem 3.7]{jiang2022balanced}.  For this, given an $r$-graph $F$ with $|F|\ge 3$, we define its \emph{proper $r$-density} $d^*_r(F)$ by \[d^*_r(F):=\max_{F'\subsetneq F,\ |F'|\ge 2}\frac{|F'|-1}{v(F')-r}.\]

\begin{thm} \label{thm:balanced-supersat_r}
Let $F$ be a non-empty $r$-partite $r$-graph with $|F|\ge 3$. If there exist positive reals $C$ and $\al>r-\frac{1}{d_r(F)}$ such that $F$ is $(C,\al)$-Sidorenko, then there exists a constant $C'=C'(F)>0$ such that $F$ is $(M,\gamma,\tau)$-balanced where $M(n)=C'n^{r-\frac{1}{|F|}}$, $\gamma(n)=C'$, and 
$$
\tau(n,m)=C'\max\l\{n^{r-\frac{v(F)-r}{|F|-1}},~m^{1-\frac{1}{d_r^*(F)(r-\al)}}n^{\frac{\al}{d^*_r(F)(r-\al)}}\r\}.
$$
\end{thm}
The proof of \Cref{thm:balanced-supersat_r} is an entirely straightforward generalization of \cite[Theorem 3.7]{jiang2022balanced} from graphs to hypergraphs, and as such we defer the exact details of this argument to an arXiv-only appendix. 

The following lemma provides some inequalities that will be useful in the upcoming proofs.  For this, we recall from the introduction that we say an $r$-graph $F$ with $|F|\ge 3$ is strictly $r$-balanced if $d_r(F)=\frac{|F|-1}{v(F)-r}$ and if (in terms of the new notation above) we have $d_r(F)>d^*_r(F)$.

\begin{lem}\label{lemma:inequalities}
    If $F$ is a strictly balanced $r_0$-graph with  $\Del(F)\ge 2$ and $r\ge |F|^2v(F)r_0$, then 
    \begin{itemize}
        \item[(i)] $$r_0-\frac{v(F)-r_0}{|F|-1}>0,$$
        \item[(ii)] $$\frac{r_0|F|-1}{d^*_{r_0}(F)}+\l(1-\frac{|F|}{d_{r_0}^*(F)}\r)\l(\frac{r-r_0}{r-1}\r)>0,$$
        \item[(iii)] $$\l(r_0-\frac{1}{|F|}\r)\frac{r-1}{r_0-1}-\frac{r-r_0}{r_0-1}<r-1,$$
        \item[(iv)] $$r_0-\frac{v(F)-r_0}{|F|-1}\ge \frac{r_0|F|-1}{d_{r_0}^*(F)}+\l(r_0-\frac{r_0-1}{r-1}\r)\l(1-\frac{|F|}{d_{r_0}^*(F)}\r).$$
    \end{itemize}
\end{lem}

\begin{proof}
    Part \textit{(i)} is equivalent to saying $v(F)<r_0 |F|$. And indeed, since $F$ is balanced and has at least one edge, $F$ has no isolated vertices, implying that $v(F)\le r_0|F|$ with equality only if $F$ is a matching.  However, $F$ can not be a matching since $\Del(F)\ge 2$, so this inequality must be strict.

    For part \textit{(ii)}, since $0<\frac{r-r_0}{r-1}<1$, the lefthand side of the inequality is larger than
$$
\frac{r_0|F|-1}{d^*_{r_0}(F)}-\frac{|F|}{d^*_{r_0}(F)}\ge \frac{|F|-1}{d^*_{r_0}(F)}>0.
$$

    For part \textit{(iii)}, moving all monomials with $r$ to the righthand side gives
    $$
    \frac{1}{|F|(r_0-1)}+1<\frac{r}{|F|(r_0-1)},
    $$
    which is equivalent to $r> |F|(r_0-1)+1$. This clearly holds since $r\ge |F|^2v(F)r_0$.

    For part \textit{(iv)}, by moving all of the terms in the inequality depending on $r$ to one side and using that $\frac{v(F)-r_0}{|F|-1}=\frac{1}{d_{r_0}(F)}$ since $F$ is balanced, we find that the stated inequality is equivalent to
    \begin{equation*}
    \frac{1}{d_{r_0}^*(F)}-\frac{1}{d_{r_0}(F)}\ge \frac{r_0-1}{r-1}\left(\frac{|F|}{d^*_{r_0}(F)}-1\right),
    \end{equation*}
    which after multiplying by $d_{r_0}(F)d_{r_0}(F)$ on both sides gives
    \[d_{r_0}(F)-d_{r_0}^*(F)\ge \frac{r_0-1}{r-1}(|F|-d^*_{r_0}(F))d_{r_0}(F).\]

    We claim that \[
    d_{r_0}(F)-d^*_{r_0}(F)\ge \frac{1}{\l(v(F)-r_0\r)^2}.
    \]  
    Indeed, this follows from the elementary fact that if $a/b$ and $c/d$ are rationals with $a/b>c/d$ and $1\le b,d\le t$, then
    \[
    \frac{a}{b}-\frac{c}{d}=\frac{ad-bc}{bd}\ge \frac{1}{t^2}.
    \]

    On the other hand,
    $$
    \frac{r_0-1}{r-1}\cdot \l(|F|-d^*_{r_0}(F)\r)\cdot d_{r_0}(F)\le \frac{r_0}{r}\cdot |F|\cdot \frac{|F|}{v(F)-r_0}.
    $$
    Thus it suffices to check that
    $$
    \frac{1}{\l(v(F)-r_0\r)^2}\ge \frac{r_0}{r}\cdot |F|\cdot \frac{|F|}{v(F)-r_0},
    $$
    which is true since $r\ge |F|^2v(F)r_0$.

\end{proof}

Using the Lifting Theorems (\Cref{lemma:BSviaSHADOW} and \Cref{lemma:BSviaGreedy}) together with \Cref{lem:Sidorenko_to_Supersaturation} and \Cref{thm:balanced-supersat_r}, we are now ready to show that $F^{(r)}$ has optimal balanced supersaturation when $r$ is sufficiently large.

\begin{thm}\label{thm:expansion_optimal_balanced}
Let $F$ be a strictly $r_0$-balanced $r_0$-graph with $\Delta(F)\ge 2$. If $F$ is Sidorenko, then for all $r> |F|^2v(F)r_0$, there exists a constant $C>0$ such that $F^{(r)}$ is $(M,\gamma,\tau)$-balanced where $M(n)=Cn^{r-1}$, $\gamma=(\log n)^C$, and 
$$
\tau(n,m)=n^{r-\frac{1}{d_r(F^{(r)})}}(\log n)^C.
$$
\end{thm}
\begin{proof}
Since $F$ is Sidorenko, we know that $F$ is $r_0$-partite, and by \Cref{lem:Sidorenko_to_Supersaturation}, we know that there exists a constant $C_1$ such that $F$ is $(C_1,r_0-\frac{1}{|F|})$-Sidorenko. Note that $d_{r_0}(F)=\frac{|F|-1}{v(F)-r_0}< |F|$, which implies $r_0-\frac{1}{|F|}> r_0-\frac{1}{d_{r_0}(F)}$. Thus by \Cref{thm:balanced-supersat_r}, there exists a constant $C_{r_0}>0$ such that $F$ is $(M_{r_0}, \gamma_{r_0}, \tau_{r_0})$-balanced where $M_{r_0}(n)=C_{r_0}n^{r_0-\frac{1}{|F|}}$, $\gamma_{r_0}(n)=C_{r_0}$, and 
$$
\tau_{r_0}(n,m)=C_{r_0}\max\l\{n^{r_0-\frac{v(F)-r_0}{|F|-1}},~m^{1-\frac{|F|}{d_{r_0}^*(F)}}n^{\frac{r_0|F|-1}{d^*_{r_0}(F)}}\r\}.
$$
Let $r_1=|F|^2v(F)r_0$. We want to apply \Cref{lemma:BSviaSHADOW} to ``lift'' the balanced supersaturation results for $F$ to $F^{(r_1)}$. To this end, we need to check that conditions (a), (b), (c) and (d) in \Cref{lemma:BSviaSHADOW} hold. Conditions (a), (b) and (c) (that $F$ is $(M_{r_0},\gam_{r_0},\tau_{r_0})$-balanced, that $M_{r_0}(n)n^{-\frac{r_1-r_0}{r_1-1}}$ is non-decreasing, and that $\gam_{r_0}(n)$ is non-decreasing) are straightforward to verify. To check condition (d) (that $\tau_{r_0}(n,m)$ is non-increasing with $m$ and that $\tau_{r_0}(nx,mx^{\frac{r_1-r_0}{r_1-1}})$ is non-decreasing with $x$), note that the exponent of $m$ in $\tau_{r_0}(n,m)$ is either 0 or $1-\frac{|F|}{d^*_{r_0}(F)}<0$, and that the exponent of $x$ in $\tau_{r_0}(nx,mx^{\frac{r_1-r_0}{r_1-1}})$ is either
$$
r_0-\frac{v(F)-r_0}{|F|-1}\text{~~or~~}\frac{r_0|F|-1}{d^*_{r_0}(F)}+\l(1-\frac{|F|}{d_{r_0}^*(F)}\r)\l(\frac{r_1-r_0}{r_1-1}\r),
$$
which are both positive by \Cref{lemma:inequalities}. Thus condition (d) holds. By \Cref{lemma:BSviaSHADOW} we know that there exists a sufficiently large constant $C_{r_1}$ such that $F^{(r_1)}$ is $(M_{r_1}, \gamma_{r_1}, \tau_{r_1})$-balanced where
$$
M_{r_1}(n)=\max\l\{M_{r_0}(n)^{\frac{r_1-1}{r_0-1}}n^{-\frac{r_1-r_0}{r_0-1}},~n^{r_1-1}\r\}(\log n)^{C_{r_1}}=n^{r_1-1}(\log n)^{C_{r_1}},
$$
where here the second equality used \Cref{lemma:inequalities}(iii),

$$
\gamma_{r_1}(n)=\gamma_{r_0}(n)\l(\log n\r)^{C'}\le \l(\log n\r)^{C_{r_1}},
$$
and
$$
\begin{aligned}
\tau_{r_1}(n,m)&=\tau_{r_0}\l(n,n^{\frac{r_1-r_0}{r_1-1}}m^{\frac{r_0-1}{r_1-1}}(\log n)^{-C'}\r)(\log n)^{C'}\\
&\le \max\l\{n^{r_0-\frac{v(F)-r_0}{|F|-1}},~\l(n^{\frac{r_1-r_0}{r_1-1}}m^{\frac
{r_0-1}{r_1-1}}\r)^{\l(1-\frac{|F|}{d_{r_0}^*(F)}\r)}n^{\frac{r_0|F|-1}{d^*_{r_0}(F)}}\r\}(\log n)^{C_{r_1}}\\
&\le \max\l\{n^{r_0-\frac{v(F)-r_0}{|F|-1}},~n^{\l(\frac{r_1-r_0}{r_1-1}+r_0-1\r)\l(1-\frac{|F|}{d_{r_0}^*(F)}\r)}n^{\frac{r_0|F|-1}{d^*_{r_0}(F)}}\r\}(\log n)^{C_{r_1}}\\
&=n^{r_0-\frac{v(F)-r_0}{|F|-1}}(\log n)^{C_{r_1}},
\end{aligned}
$$
where here the second inequality uses the fact that $m\ge M_{r_1}(n)\ge  n^{r_1-1}$ and that $1-\frac{|F|}{d^*_{r_0}(F)}<0$, and the last equality is implied by \Cref{lemma:inequalities}(iv).

For $r>r_1$, we want to get rid of the logarithmic factor in $M(n)$ by using \Cref{lemma:BSviaGreedy} on $F^{(r-1)}$. Note that $r>r_1>v(F)$, and thus $F^{(r)}$ is a spanning subgraph of a tight $r$-tree by \Cref{lem:spanningTreeInduction} (since the $r_0$-shadow of a single $v(F)$-edge contains $F$ as a spanning subgraph). Suppose that we have inductively proven that there exists a constant $C_{r-1}$ such that $F^{(r-1)}$ is $(M_{r-1},\gamma_{r-1},\tau_{r-1})$-balanced where $M_{r-1}(n)=n^{r-2}(\log n)^{C_{r-1}}$, $\gamma_{r-1}(n)=(\log n)^{C_{r-1}}$, and 
$$
\tau_{r-1}(n,m)= n^{r_0-\frac{v(F)-r_0}{|F|-1}}(\log n)^{C_{r-1}},
$$
noting that we have proved this for the case $r-1=r_1$. It is straightforward to check that conditions (a), (b) and (c) in \Cref{lemma:BSviaGreedy} hold. Let $C_r$ be a sufficiently large constant and let $M_r(n)=C_rn^{r-1}$, $\gamma_r(n)=\gamma_{r-1}(n)\cdot C_r(\log n)$, and let
$$
A=A(n,m)=\frac{m}{M_{r-1}(n)\cdot C_r\log n}=\frac{m}{n^{r-2}\cdot C_r(\log n)^{C_{r-1}+1}}.
$$
Clearly, condition (d) in \Cref{lemma:BSviaGreedy} holds. Thus by \Cref{lemma:BSviaGreedy}, $F^{(r)}$ is $(M_r, \gamma_r,\tau_r)$-balanced where there exists a sufficiently large constant $C$ such that
$$
\tau_r(n,m)=\max\l\{A(n,m)^{-\frac{1}{d_r(F^{(r)})}}\cdot m,~\tau_{r-1}\l(n,~\frac{m}{A(n,m)\cdot C_{r}\log n}\r)\cdot C_r\log n\r\}\le n^{r-\frac{1}{d_r(F^{(r)})}}(\log n)^C,
$$
where here the last inequality comes from the definition of $\tau_{r-1}$ together with the fact from \Cref{lem:densityRelation} that
$$
r_0-\frac{v(F)-r_0}{|F|-1}=r_0-\frac{1}{d_{r_0}(F)}=r-\frac{1}{d_r(F^{(r)})},
$$
and that (using $m\ge n^{r-1}$)
$$
A(n,m)^{-\frac{1}{d_r(F^{(r)})}}\cdot m= m^{1-\frac{1}{d_r(F^{(r)})}}n^{\frac{r-2}{d_r(F^{(r)})}+o(1)}\le n^{r-1-\frac{1}{d_r(F^{(r)})}+o(1)}.
$$
This completes the proof.  
\end{proof}
As an aside, we note that it is perhaps a little surprising that although the general result of Jiang and Longbrake \Cref{thm:balanced-supersat_r} typically does not give optimal balanced supersaturation results for most $r_0$-graphs $F$, the proof of \Cref{thm:expansion_optimal_balanced} shows that our Lifting Theorems are powerful enough to translate these potentially suboptimal bounds into optimal bounds for high enough expansions.

Finally, we will need the following result from \cite[Theorem 1.4]{nie2023sidorenko} in order to prove the ``moreover'' portion of \Cref{thm:strongExpansion}.

\begin{thm}[\cite{nie2023sidorenko}]\label{thm:Sidorenko}
    If $F$ is an $r$-graph with $|F|\ge 2$ and $\frac{v(F)-r}{|F|-1}<r$ which is not Sidorenko,  then there exists some $\ep>0$ such that for any $p=p(n)\ge n^{-\frac{v(F)-r}{|F|-1}}$, we have a.a.s.
    \[\ex(G_{n,p}^r,F)\ge n^{r-\frac{v(F)-r}{|F|-1}-o(1)}(p n^{\frac{v(F)-r}{|F|-1}})^{\ep}.\]
\end{thm}
The exact statement of \cite[Theorem 1.4]{nie2023sidorenko} in fact replaces the $\ep$ here with a more precise quantity $\frac{s(F)}{e(F)-1+s(F)}$ where $s(F)$ measures how ``far'' from Sidorenko $F$ is, but all that we need here is $F$ being non-Sidorenko implies $s(F)>0$.  With this, we now have all the pieces needed to prove \Cref{thm:strongExpansion}.

\begin{proof}[Proof of \Cref{thm:strongExpansion}]
    The first part of the theorem regarding Sidorenko $F$ follows immediately from \Cref{thm:optimalBalanced} and \Cref{thm:expansion_optimal_balanced}.  For the moreover portion, let $F$ be as in the hypothesis of the theorem such that $F^{(r)}$ is not Sidorenko for all $r\ge r_0$.  By the definition of the expansion $F^{(r)}$ and $F$ being balanced, we have for all $r\ge r_0$ that
    \[r-\frac{v(F^{(r))}-r}{|F^{(r)}|-1}=r-\frac{v(F)+(r-r_0)|F|-r}{|F|-1}=r_0-\frac{v(F)-r_0}{|F|-1}=r_0-\frac{1}{d_{r_0}(F)},\]
    and by \Cref{lemma:inequalities}(i) this quantity is positive.  This together with \Cref{thm:Sidorenko} implies that for all $r\ge r_0$, there exists some $\ep>0$ such that for all $\del>0$ and $p=n^{-r+r_0-\frac{1}{d_{r_0}(F)}+\del}=n^{-r+\frac{v(F^{(r)})-r}{|F^{(r)}|-1}+\del}$, we have a.a.s.
    \[\ex(G_{n,p}^r,F^{(r)})\ge n^{r_0-\frac{1}{d_{r_0}(F)}+\ep\del-o(1)}.\]
    This bound is strictly stronger than what is claimed in \Cref{thm:strongExpansion} for Sidorenko hypergraphs whenever $0<\del<1$, proving that such bounds can not extend to any $F^{(r)}$.
\end{proof}

Before moving on, we emphasize that the bound of $r>|F|^2 v(F) r_0$ obtained in \Cref{thm:strongExpansion} can be improved upon significantly provided more information about the hypergraph $F$ is known.  Notably, one can obtain better bounds here provided one has better bounds on either (a) the difference $d_{r_0}(F)-d_{r_0}^*(F)$, (b) the smallest $r_1$ such that $F^{(r_1+1)}$ is contained in a spanning subgraph of a tight $(r_1+1)$-tree, or (c) the smallest $\al$ such that $F$ is $(C,\al)$-Sidorenko for some $C$.  In particular, our current proof uses the most pessimistic estimates possible for each of these quantities, so there is quite a bit of room for substantial improvement for specific choices of $F$.  

More precisely, if one uses more precise bounds on $d_{r_0}(F)-d^*_{r_0}(F)$, $r_1$ and $\alpha$, then a more involved calculation shows that the tight bounds of \Cref{thm:strongExpansion} would hold provided

$$
r\ge r_0-\frac{1}{d_{r_0}(F)}+\frac{(r_0-1)\l((r_1-r_0)d_{r_0}(F)+1\r)(\frac{1}{r_0-\alpha}-d^*_{r_0}(F))}{(r_1-1)(d_{r_0}(F)-d^*_{r_0}(F))}.
$$

In particular, when $F=C_{2\ell}$, we can use $r_1=r_0=2$, $d_2(C_{2\ell})=\frac{2\ell-1}{2\ell-2}$, $d^*_2(C_{2\ell})=1$, and $\alpha=1+\frac{1}{\ell}$ to obtain tight bounds for $r\ge 4$, recovering the known bounds for even cycles as stated in \Cref{thm:looseCycles}. However, we emphasize that one can often obtain better bounds for $r$ than even this optimized version of our result whenever more precise balanced supersaturation results are known beyond the general results of Jiang and Longbrake.  In particular, for the case $F=K_{2,t}$, this optimized result above gives tight bounds only for $r\ge t+2$, while (as we show below) the true answer is $r\ge 3$ for $t\ge 3$.

\subsection{Proofs of \Cref{theorem:K2t_RT} and \Cref{theorem:theta_RT}}\label{sub:short}
Our proof for theta graphs relies on \Cref{lemma:BSviaGreedy}, for which we need the following.

\begin{lem}\label{lemma:tighttree_Theta}
For all $a,b\ge 2$, the 3-graph $\theta^{(3)}_{a,b}$ is a spanning subgraph of a tight $3$-tree.
\end{lem}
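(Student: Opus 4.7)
The plan is to invoke \Cref{lem:spanningTreeInduction} with $k=2$ and $r_0=r=3$, which reduces the problem to constructing a tight $3$-tree $T'$ on the vertex set of $\theta_{a,b}$ whose $2$-shadow contains $\theta_{a,b}$ as a spanning subgraph. Writing the $i$-th path as $u = w_0^{(i)}, w_1^{(i)}, \dots, w_{b-1}^{(i)}, w_b^{(i)} = v$, the key idea I will exploit is that by ``centering'' the subtree for path $1$ at $u$, its $2$-shadow automatically contains the chord $\{u,v\}$; this lets me seed every later path with an edge containing both $u$ and $v$ and then grow that path ``around $v$.''

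Concretely, for path $1$ I would take the $b-1$ edges $e_j^{(1)} = \{u, w_j^{(1)}, w_{j+1}^{(1)}\}$ for $j = 1, \dots, b-1$; ordering them by $j$, each $e_j^{(1)}$ with $j\ge 2$ introduces the single new vertex $w_{j+1}^{(1)}$ with $\{u, w_j^{(1)}\} \sub e_{j-1}^{(1)}$, and their $2$-shadows jointly cover all $b$ edges of path $1$ together with the chord $\{u,v\}$ (appearing in $e_{b-1}^{(1)}$). For each $i \ge 2$ I would then append $e_0^{(i)} = \{u, v, w_1^{(i)}\}$ followed by $e_j^{(i)} = \{w_j^{(i)}, v, w_{j+1}^{(i)}\}$ for $j = 1, \dots, b-2$: the edge $e_0^{(i)}$ introduces $w_1^{(i)}$ via $\{u,v\}\sub e_{b-1}^{(1)}$, and each subsequent $e_j^{(i)}$ introduces $w_{j+1}^{(i)}$ via $\{w_j^{(i)}, v\}\sub e_{j-1}^{(i)}$. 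A direct count gives $|T'| = a(b-1) = v(T')-2$, consistent with being a tight $3$-tree, and the $2$-shadow of $T'$ contains all $ab$ path edges of $\theta_{a,b}$: for each $i \ge 2$, the edge $\{u, w_1^{(i)}\}$ comes from $e_0^{(i)}$, each interior edge $\{w_j^{(i)}, w_{j+1}^{(i)}\}$ comes from $e_j^{(i)}$, and the final edge $\{w_{b-1}^{(i)}, v\}$ comes from either $e_{b-2}^{(i)}$ (if $b\ge 3$) or $e_0^{(i)}$ (if $b=2$).

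The only real subtlety is that for $i \ge 2$ both endpoints $u$ and $v$ are already present in the partial tree, so the first edge added for path $i$ must have two old vertices, which is precisely why path $1$ must be set up so that $\{u,v\}$ lies in its $2$-shadow; beyond this endpoint issue the construction is routine. With tightness and shadow coverage verified, \Cref{lem:spanningTreeInduction} then yields a tight $3$-tree containing $\theta_{a,b}^{(3)}$ as a spanning subgraph, completing the proof.
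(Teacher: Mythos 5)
Your proof is correct and follows essentially the same strategy as the paper's: invoke \Cref{lem:spanningTreeInduction} (with $k=2$, $r_0=r=3$) to reduce to constructing a tight $3$-tree on $V(\theta_{a,b})$ whose $2$-shadow contains $\theta_{a,b}$, and then build that tree by anchoring each path at a fixed endpoint. The only difference is cosmetic: the paper anchors every path at $v$ (taking $\{u,w_{i,1},v\}, \{w_{i,1},w_{i,2},v\}, \dots$ for all $i$), while you anchor the first path at $u$ to place $\{u,v\}$ in the shadow and then anchor the remaining paths at $v$; both yield valid tight $3$-trees with the required $2$-shadow.
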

\begin{proof}
By \Cref{lem:spanningTreeInduction} it suffices to construct a tight $3$-tree whose 2-shadow contains a copy of $\theta_{a,b}$. Let $u$ and $v$ be the two end points of $\theta_{a,b}$, let $P_i$, $1\le i\le a$ be the $a$ paths, and let $w_{i,j}$, $1\le j\le b-1$, be the internal vertices of $P_i$, and for notational convenience we let $w_{i,0}=u$ and $w_{i,b}=v$ for all $i$.  To build our tight tree, we begin by adding all of the edges of the form $\{w_{i,0},w_{i,1},w_{i,b}\}$ for all $i$, and iteratively given then we have added all edges of the form $\{w_{i,j-1},w_{i,j},w_{i,b}\}$ for some $1\le j\le b-2$ we add all edges of the form $\{w_{i,j},w_{i,j+1},w_{i,b}\}$.  It is not difficult to see that this is a tight tree (since each edge we add adds the new vertex $w_{i,j+1}$) and that its 2-shadow contains the edge $w_{i,j},w_{i,j+1}$ for all $1\le i\le a$ and $1\le j\le b-1$ (with the $j=b-1$ case coming from the final edges $\{w_{i,b-2},w_{i,b-1},w_{i,b}\}$, and hence a copy of $\theta_{a,b}$.
\end{proof}
We now show that $\theta^{(r)}_{a,b}$ has optimal balanced supersaturation whenever $\theta_{a,b}$ does with $a\ge 3$.
\begin{prop}\label{prop:thetaBS}
    Let $a\ge 3$ and $b\ge 2$ be such that there exists a constant $C_2>0$ such that $\theta_{a,b}$ is $(M_2,C_2,\tau_2)$-balanced where
    $$
    M_2(n)=C_2n^{1+\frac{1}{b}}
    $$
    and
    $$
    \tau_2(n,m)=\max\l\{n^{1+\frac{a-1}{ab-1}},~m^{-\frac{1}{b-1}}n^{1+\frac{2}{b-1}}\r\}.
    $$
    Then for all $r\ge 3$, we have for all $0<p=p(n)\le 1$ that a.a.s.

 $$
    \ex(G^r_{n,p}, \theta_{a,b}^{(r)})=
    \l\{
    \begin{aligned}
        & \max\{\Theta(pn^{r-1}),n^{1+\frac{a-1}{ab-1}}(\log n)^{\Theta(1)}\},~~&\t{if}~p\gg n^{-r+1+\frac{a-1}{ab-1}}\\
        &(1+o(1))p\binom{n}{r},~~&\t{if}~ n^{-r}\ll p\ll  n^{-r+1+\frac{a-1}{ab-1}}.\\
    \end{aligned}
    \r.
    $$
\end{prop}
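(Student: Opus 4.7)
The plan is to reduce the proposition to \Cref{thm:optimalBalanced} applied with $r_0 = 3$ and $F = \theta_{a,b}^{(3)}$. The bulk of the work is establishing the hypothesis of that theorem, namely balanced supersaturation for $\theta_{a,b}^{(3)}$, which I would obtain by lifting the given balanced supersaturation of $\theta_{a,b}$ via \Cref{lemma:BSviaGreedy}. Once this is in hand, \Cref{thm:optimalBalanced} delivers the desired asymptotic formula for all $r \geq 3$, upon observing that $(\theta_{a,b}^{(3)})^{(r)} = \theta_{a,b}^{(r)}$ by the definition of expansion.

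Before tackling the main step, I would verify the auxiliary hypotheses of \Cref{thm:optimalBalanced} at $F = \theta_{a,b}^{(3)}$: here $\Delta(F) = a$ (attained at the poles) and $|F| = ab$, giving $2 \leq a < ab$ since $a \geq 3$ and $b \geq 2$; and $\theta_{a,b}^{(3)}$ is a spanning subgraph of a tight $3$-tree by \Cref{lemma:tighttree_Theta}. A brief calculation using \Cref{lem:densityRelation} shows that $1/d_r(\theta_{a,b}^{(r)}) = (r-2) + a(b-1)/(ab-1)$ for any $r \geq 2$, where the full graph $\theta_{a,b}$ itself achieves the maximum in the definition of $d_2(\theta_{a,b})$; hence $r - 1/d_r(\theta_{a,b}^{(r)}) = 1 + (a-1)/(ab-1)$ matches the exponent appearing in the proposition.

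The key step is to show that $\theta_{a,b}^{(3)}$ is $(M_3, \gamma_3, \tau_3)$-balanced with $M_3(n) = O(n^2)$, $\gamma_3(n) = (\log n)^{O(1)}$, and $\tau_3(n,m) = n^{1+(a-1)/(ab-1)}(\log n)^{O(1)}$. For this I would invoke \Cref{lemma:BSviaGreedy} with $r = 3$ and $F = \theta_{a,b}$: hypothesis (a) is exactly the assumed balanced supersaturation for $\theta_{a,b}$, and (b), (c) hold trivially given that $\gamma_2 = C_2$ is constant and $\tau_2$ is manifestly monotone. For the auxiliary function required by (d), I would take
\[ A(n,m) := \max\{m^{d_3(\theta_{a,b}^{(3)})} n^{1 - 3 d_3(\theta_{a,b}^{(3)})},\; m n^{-2}\}, \]
following the template used in the proof of \Cref{prop:optimalBSExpansions}. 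With this choice, the first term $A^{-1/d_3(\theta_{a,b}^{(3)})} \cdot m$ in the resulting $\tau_3$ formula is at most $n^{1+(a-1)/(ab-1)}$ by construction, and condition (d) reduces to routine polynomial comparisons.

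The main obstacle will be controlling the second term $\tau_2(n, m/(A \cdot C \log n)) \cdot C \log n$ in the resulting $\tau_3$, since the ``small-$m$'' branch $(m')^{-1/(b-1)} n^{1+2/(b-1)}$ of $\tau_2$ could in principle exceed our target bound. A direct substitution shows that this branch is at most $n^{1+(a-1)/(ab-1)}$ (up to polylog factors) precisely when the polynomial inequality $a^2(b-1) + 1 \geq a(2b-1)$ holds. This inequality fails for $a = 2$ with any $b \geq 2$, but holds for all $a \geq 3$ and $b \geq 2$ since its left side grows quadratically in $a$ while its right side grows only linearly; this is exactly where the hypothesis $a \geq 3$ enters the proof.
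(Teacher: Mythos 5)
Your proposal is correct and follows essentially the same path as the paper's proof: lift the given balanced supersaturation for $\theta_{a,b}$ to $\theta_{a,b}^{(3)}$ via \Cref{lemma:BSviaGreedy} (your $A$ is, after simplification, the same as the paper's choice $A=m^{\frac{ab-1}{2ab-a-1}}n^{-\frac{ab+a-2}{2ab-a-1}}$, with the extra $mn^{-2}$ term in your maximum being automatically dominated), then invoke \Cref{thm:optimalBalanced} with $r_0=3$ and $F=\theta_{a,b}^{(3)}$. The decisive inequality you identify, $a^2(b-1)+1\ge a(2b-1)$, is exactly the inequality $(b-1)a^2-2ab+a+1\ge 0$ that the paper verifies, and your explanation of where $a\ge 3$ is used is the same.
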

\begin{proof}
We begin by using \Cref{lemma:BSviaGreedy} to show that $\theta_{a,b}^{(3)}$ is balanced, and for this we need to check the hypothesis of \Cref{lemma:BSviaGreedy}. By Lemma~\ref{lemma:tighttree_Theta}, $\theta^{(3)}_{a,b}$ is a spanning subgraph of a tight 3-tree. Property (a) holds by hypothesis, and it is easy to check that $\gamma_2$ and $\tau_2$ satisfy properties (b) and (c). Thus Theorem~\ref{lemma:BSviaGreedy} guarantees the existence of some sufficiently large constant $C_3$ such that the following holds. Let $M_3(n)=C_3n^2$, let $\gamma_3(n)=C_2C_3\log n$, and let 
$$
A=A(n,m)=m^{\frac{ab-1}{2ab-a-1}}n^{-\frac{ab+a-2}{2ab-a-1}}.
$$
It is easy to check that property (d) in Theorem~\ref{lemma:BSviaGreedy} holds, that is, for all sufficiently large $n$ and $m\ge M_3(n)=C_3n^{2}$ we have 
$$
m n^{-2} \le A(n,m)\le \frac{m}{M_2(n) C_3\log n}=\frac{m}{C_2C_3n^{1+\frac{1}{b}}\log n}.
$$
Define the function
$$
\tau_3(n,m)=\max\l\{A(n,m)^{-2+\frac{a-1}{ab-1}}m,~\tau_2\l(n,\frac{m}{A(n,m)\cdot C_3\log n}\r)\cdot C_3\log n\r\}.
$$
By Theorem~\ref{lemma:BSviaGreedy}, we have that $\theta_{a,b}^{(3)}$ is $(M_3, \gamma_3, \tau_3)$-balanced.  Note that
$$A(n,m)^{-2+\frac{a-1}{ab-1}}m=n^{1+\frac{a-1}{ab-1}},$$
and we claim that
$$
\tau_2\l(n,\frac{m}{A(n,m)}\r)=\max\l\{n^{1+\frac{a-1}{ab-1}},~m^{-\frac{a}{2ab-a-1}}n^{1+\frac{3a}{2ab-a-1}}\r\}\le n^{1+\frac{a-1}{ab-1}}.
$$
Indeed, using $m\le C_3 n^2$ and that $n$ is sufficiently large, it suffices to verify that $\frac{a-1}{ab-1}>\frac{a}{2ab-a-1}$, for which it suffices to show (using $a\ge 3$)
\begin{align*}
    (a-1)(2ab-a-1)-(ab-1)a&=(b-1)a^2-2ab+a+1\\ &\ge (b-1)3a-2ab+a+1=(b-2)a+1>0
\end{align*}

which holds for $b\ge 2$.  Thus, we have 
$$
\tau_3(n,m)\le n^{1+\frac{a-1}{ab-1}}(\log n)^{C_3}.
$$

Letting $\tau_3'(n,m)=n n^{1+\frac{a-1}{ab-1}}(\log n)^{C_3}$, the above shows that $\theta^{(3)}_{a,b}$ is $(M_3,\gamma_3,\tau_3')$-balanced and a spanning subgraph of a tight $3$-tree.  The desired random Tur\'an bound then follows for all $r\ge 3$ by \Cref{thm:optimalBalanced}.
\end{proof}
To utilize \Cref{prop:thetaBS}, we make use of the following known balanced supersaturation results (rephrased in the language of our paper).

\begin{thm}[ \cite {morris2016number} Theorem 7.4]\label{theorem:MorrisSaxtonSupersat}
For all $t\ge s\ge 2$, there exist sufficiently large $C_2>0$  such that $K_{s,t}$ is $(M_2, C_2, \tau_2)$-balanced where
$$
M_2(n)=C_2n^{2-1/s}
$$
and
$$
\tau_2(n,m)=\max\l\{n^{\frac{2st-s-t}{st-1}},~m^{-s+1}n^{2s-1}\r\}.
$$
\end{thm}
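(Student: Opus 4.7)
Since \Cref{theorem:MorrisSaxtonSupersat} is quoted from \cite{morris2016number}, the plan is to outline Morris and Saxton's approach. First I would pass to a well-behaved subgraph. Set $d=m/n$. Iteratively deleting vertices of degree smaller than $d/10$ and then restricting $H$ to a dyadic class of pair-codegrees will produce a subgraph $H'\sub H$ with $\Om(m)$ edges such that every vertex has degree $\Theta(d)$ and every pair of vertices has codegree either $0$ or in a fixed dyadic interval $[D,2D]$; the polylogarithmic losses from these cleanings can be absorbed into the constant $C_2$. Applying convexity to the common-neighbor counts of $s$-sets then gives $\Om(m^{st}/n^{2st-s-t})$ labelled copies of $K_{s,t}$ in $H'$, and $\c{H}$ will be defined as this family of copies (possibly with multiplicities adjusted so that each $K_{s,t}$ is counted in a canonical way).

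The heart of the argument is verifying the codegree condition \eqref{equation:Delta}. Fix $1\le i\le st$ and a set $\sig\sub E(H')$ with $|\sig|=i$. The copies of $K_{s,t}$ in $\c{H}$ containing $\sig$ are controlled by the combinatorial shape of $\sig$ inside the ambient $K_{s,t}$, namely which subset of the $s+t$ vertices is pinned by $\sig$ and which remain free. For each shape I would bound the number of completions using the degree and codegree bounds coming from the regularization step. Two extreme regimes then produce the two terms in $\tau_2$: when $\sig$ pins few vertices on the small side of the $K_{s,t}$, the remaining vertices can be chosen almost freely subject only to the pair-codegree $D$, and balancing the extension count against $|\c{H}|$ produces the extremal term $n^{(2st-s-t)/(st-1)}$; when $\sig$ pins the entire $s$-set on the small side, the number of choices for the $t$-set on the large side is governed by common-neighbor counts, and a K\H{o}v\'ari--S\'os--Tur\'an-type estimate yields the shadow term $m^{-s+1}n^{2s-1}$. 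Taking the maximum over shapes gives $\tau_2$.

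The main obstacle is precisely this shape enumeration: one must verify the degree bound uniformly over every way that $i$ edges can sit inside a $K_{s,t}$, and the number of such shapes grows combinatorially in $i$. Rather than handling cases individually, my plan mirrors that of Morris and Saxton, who set up a single induction that simultaneously tracks the number of pinned vertices and the number of edges of $\sig$ still to be covered; this induction constitutes the technical core of their Section~7, and it is what the proposed proof ultimately reduces to.
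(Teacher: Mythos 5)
This statement is imported verbatim from Morris and Saxton \cite{morris2016number} (their Theorem~7.4); the paper does not supply a proof but treats it as a black box, and your proposal is in effect the same move, since you explicitly conclude that the argument ``ultimately reduces to'' the inductive shape analysis of Morris and Saxton's Section~7. As a summary of what that cited proof does your outline is broadly in the right shape (regularize, count $K_{s,t}$ by convexity, bound extensions by shape enumeration over which vertices of the ambient $K_{s,t}$ are pinned by $\sigma$), so there is no substantive conflict with the paper.

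Two cautions worth noting, though, since they matter if one were to actually expand this into a proof. First, the parenthetical that ``polylogarithmic losses from these cleanings can be absorbed into the constant $C_2$'' is not innocuous: in the statement $\gamma_2(n)\equiv C_2$ is a genuine constant, so a naive dyadic classification of pair-codegrees (which costs $\log n$ factors in both the edge count of $H'$ and the number of codegree classes) would force $\gamma_2$ to be polylogarithmic, not constant. Morris and Saxton's actual argument is arranged so that the spreadness bound \eqref{equation:Delta} is proved directly with a bounded multiplicative loss; one cannot simply dyadically pigeonhole and wave the loss away. Second, the attribution of the term $m^{-s+1}n^{2s-1}$ to ``a K\H{o}v\'ari--S\'os--Tur\'an-type estimate'' is at best heuristic; this term is the degenerate ``shadow'' branch of $\tau_2$ that governs the regime $m$ near $M_2(n)=C_2 n^{2-1/s}$, and it falls out of the extension count for $\sigma$ pinning few vertices combined with the degree normalization $d=m/n$, rather than from an application of KST per se. Neither point is a gap in this paper, since the paper merely quotes the theorem, but both are gaps in the proposed sketch if read as a standalone proof.
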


\begin{thm}[\cite{mckinley2023random} Corollary 5.2]\label{theorem:MS_BS_Theta}
For all $a\ge 100$ and $b\ge 3$, there exists a sufficiently large constant $C_2$ such that $\theta_{a,b}$ is $(M_2,C_2,\tau_2)$-balanced where
$$
M_2(n)=C_2n^{1+\frac{1}{b}}
$$
and
$$
\tau_2(n,m)=\max\l\{n^{1+\frac{a-1}{ab-1}},~m^{-\frac{1}{b-1}}n^{1+\frac{2}{b-1}}\r\}.
$$
\end{thm}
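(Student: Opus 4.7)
The plan is to adapt the Morris–Saxton strategy for balanced supersaturation of even cycles to the multi-path setting of theta graphs. First, I would regularize: pass from $H$ to a subgraph $H'$ whose minimum degree is comparable to the average $d=m/n$, while retaining a constant fraction of the edges. This is a standard pruning step that iteratively deletes low-degree vertices and costs only constant factors, and it reduces the problem to the regular case.

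Next, I would use path counting to find many pairs $(u,v)$ joined by an abundance of $b$-paths. For each pair $(u,v)\in \binom{V(H')}{2}$, let $P_b(u,v)$ denote the set of paths of length $b$ from $u$ to $v$ in $H'$. A straightforward double counting shows that $\sum_{u,v}|P_b(u,v)|$ is at least of order $m\cdot d^{b-1}$, so by an averaging argument there is a collection $\mathcal{P}$ of ``heavy'' pairs each satisfying $|P_b(u,v)|\ge t$ for an appropriate threshold $t\approx d^{b}/n$ times a factor depending on $a$. For each heavy pair I would greedily extract $a$ internally disjoint paths: since removing a single path blocks at most $O((b-1)^2)$ others through its $b-1$ internal vertices, this procedure succeeds provided $t$ is large enough in terms of $a$ and $b$. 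The resulting collection of theta graphs is $\mathcal{H}$, with $|\mathcal{H}|$ easily estimated from below using the bound $\sum|P_b(u,v)|\gtrsim m d^{b-1}$.

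The third step is the $\Delta_i$ analysis, which is the heart of the argument and the reason $\tau_2$ has two pieces. For a set $\sigma$ of $i$ edges of $H'$, I would bound the number of $\theta\in\mathcal{H}$ containing $\sigma$ by splitting into cases based on how the edges of $\sigma$ distribute across the $a$ paths of $\theta$. When $\sigma$ meets ``many'' paths (the generic case) the number of completions of $\sigma$ to a full $\theta_{a,b}$ is controlled by the Kővári–Sós–Turán-style count giving the first term $n^{1+(a-1)/(ab-1)}$, which matches the $2$-density of $\theta_{a,b}$. When $\sigma$ is concentrated inside a single path (the atypical case), the remaining $a-1$ paths each contribute a freedom of size $\approx d^{b-1}$, yielding the second term $m^{-1/(b-1)}n^{1+2/(b-1)}$.

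The main obstacle will be the combinatorial accounting for the $\Delta_i$ bound, especially in intermediate regimes where $\sigma$ meets several but not all of the $a$ paths; handling these mixed cases typically demands an iterative cleaning step that discards pairs $(u,v)$, or individual paths in $P_b(u,v)$, whose contribution to some $\Delta_i$ is too large, while one must simultaneously verify that only a small fraction of $\mathcal{H}$ is lost so that the lower bound on $|\mathcal{H}|$ still produces the correct supersaturation ratio. A secondary subtlety is the condition $a\ge 100$: the combinatorial bookkeeping loses several factors that are polynomial in $a$, and one needs $a$ large enough that the first term of $\tau_2$ comfortably dominates these losses throughout the relevant range $m\ge C_2 n^{1+1/b}$.
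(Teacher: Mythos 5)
This theorem is not proved in the paper: it is imported verbatim from \cite[Corollary~5.2]{mckinley2023random} and used as a black box, so there is no in-paper argument to compare your sketch against. Evaluated on its own, your outline follows the right Morris--Saxton template, but the greedy extraction step contains a concrete error. You claim that committing to one $b$-path from $P_b(u,v)$ blocks at most $O((b-1)^2)$ others ``through its $b-1$ internal vertices.'' That is false: a single internal vertex of the chosen path can lie on a constant fraction, or even all, of the paths in $P_b(u,v)$, so removing the paths that meet the chosen one could eliminate essentially the entire collection and the greedy procedure stalls after one iteration. To make the extraction go through one must first control the number of $b$-paths through any given vertex --- a codegree regularization of the path family, not just a degree regularization of $H$ --- or replace the greedy argument with the statement that a uniformly random $a$-tuple of paths from $P_b(u,v)$ is internally disjoint with constant probability after such pruning. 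That codegree control is precisely what feeds both the two-piece shape of $\tau_2$ and the $\Delta_i$ bookkeeping, so the omission is not cosmetic: without it neither the lower bound on $|\mathcal{H}|$ nor the stated $\Delta_i$ estimates are actually established. I would also be cautious about attributing the hypothesis $a\ge 100$ to polynomial-in-$a$ slack in the counting; in \cite{mckinley2023random} it is a structural requirement of the construction rather than a constant that could be removed by tightening bookkeeping.
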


Theorems~\ref{theorem:K2t_RT} and \ref{theorem:theta_RT} now follow immediately from \Cref{prop:thetaBS} together with these balanced supersaturation results (and the fact that $K_{2,t}=\theta_{t,2}$).




\bibliographystyle{abbrv}
\bibliography{refs}

\newpage 
\appendix

\section{Appendix}

\subsection{Comparing the Methods}\label{sub:comparison}

Here we very informally show that \Cref{lemma:BSviaGreedy}, whose proof used both Greedy and Shadow Expansion, is essentially always at least as strong as \Cref{lemma:BSviaSHADOW}, whose proof only used Shadow Expansion.  For this we use the notation $O^*$ and $\Omega^*$ to denote asymptotic upper bounds and lower bounds that hold up to polylogarithmic factors, and throughout our argument we will be loose with logarithmic terms in order to make some of the calculations clearer.

Let $F$ be an $(r-1)$-graph which satisfies the hypothesis of \Cref{lemma:BSviaGreedy} by being $(M_{r-1},\gamma_{r-1},\tau_{r-1})$-balanced, and let us make the extra mild assumption that $\gamma_{r-1}(n)=O^*(1)$.  As we show in \Cref{prop:optimalBSBound}, this implies $\tau_{r-1}(n,m)=\Om^*(n^{r-\frac{1}{d_r(F^{(r)})}})$.  With this in mind, we consider applying \Cref{lemma:BSviaGreedy} with $A=(m/n)^{1/(r-1)}$ for $m\ge \max\{M_{r-1}^{\frac{r-1}{r-2}}(n)n^{-\frac{1}{r-2}},n^{r-1}\}$ (i.e.\ for the range of $m$ that \Cref{lemma:BSviaSHADOW} is valid for).  Note that $m\ge M_{r-1}^{\frac{r-1}{r-2}}(n)n^{-\frac{1}{r-2}}$ is equivalent to having $A=(m/n)^{1/(r-1)}\le m/M_{r-1}(n)$, so (up to logarithmic factors) we have that the upper bound of property (d) in \Cref{lemma:BSviaGreedy}, and it is straightforward to verify the lower bound $A\ge m n^{1-r}$ for $r\ge 3$ and $m\ge n^{r-1}$, so we can indeed apply \Cref{lemma:BSviaGreedy} with this value of $A$.  We now observe that the first term in the maximum of $\tau_r(n,m)$ defined in \Cref{lemma:BSviaGreedy} satisfies
\[A^{-\frac{1}{d_r(F^{(r))}}}m\le n^{r-\frac{1}{d_r(F^{(r)})}},\]
with the inequality using $m\le n^r$.  This together with the claim above implies that up to logarithmic terms $\tau_r(n,m)$ equals $\tau_{r-1}(n,A^{-1}m)=\tau_{r-1}(n,n^{\frac{1}{r-1}}m^{\frac{r-2}{r-1}})$, and this is exactly what \Cref{lemma:BSviaSHADOW} gives.

\subsection{Proof of \Cref{thm:balanced-supersat_r}}
We make use of the following lemma of Jiang and Longbrake~\cite{jiang2022balanced}.
\begin{lem}[Lemma 3.4,~\cite{jiang2022balanced}]\label{lemma:JL_key}
Let $r,h,\ell$ be positive integers, where $h\geq r\geq 2$.
Let $F$ be an $r$-partite $r$-graph with $h$ vertices and $\ell$ edges. Let $\alpha, C$ be positive reals such that for each $n$ every 
$n$-vertex $r$-graph $H$ with $m\geq Cn^\alpha$ edges contains at least $f(n,m)$ copies of $H$,
where $f$ is a function satisfying the following.
\begin{itemize}
\item[(a)] There is a constant $\delta>0$ such that for all $p\in (0,1]$ and  all positive reals $n,m$ 
\[ f(n,m)\geq \delta m^{\frac{h-\al}{r-\al}} / n^{\frac{\al(h-r)}{r-\al}} \quad \mbox{ and  }  \quad f(np,mp^r)\geq \delta f(n,m) p^h.\]
\item[(b)] For fixed $n$, $f(n,m)$ is increasing and convex in $m$.
\end{itemize}
Then there exist a constant $C'=C'(F) \geq 1$ such that if $H$ is an $n$-vertex $r$-graph with $kn^\alpha$ edges, where $k\geq C'$, then there exists
a family $\F$ of copies of $F$ in $H$ satisfying
\begin{itemize}
\item[(c)] $|\F|\geq \delta f(n,\frac{1}{8}|H|)$, and
\item[(d)] $\forall 1\leq i\leq \ell -  1$, \[\Delta_i(\F)\leq C' k^{-\frac{i - 1}{d^*_r(F)(r-\al)}} \frac{|\F|}{|H|},\]
where $d^*_r(F)$ is the proper $r$-density of $F$.
\end{itemize}
\end{lem}

We are now ready to prove \Cref{thm:balanced-supersat_r}, which we restate below for convenience.

\begingroup
\def\thethm{\ref{thm:balanced-supersat_r}}

\begin{thm} 
Let $F$ be a non-empty $r$-partite $r$-graph with $|F|\ge 2$. If there exist positive reals $C$ and $\al>r-\frac{1}{d_r(F)}$ such that $F$ is $(C,\al)$-Sidorenko, then there exists constant $C'=C'(F)>0$ such that $F$ is $(M,\gamma,\tau)$-balanced where $M(n)=C'n^{r-\frac{1}{|F|}}$, $\gamma=C'$, and 
$$
\tau(n,m)=C'\max\l\{n^{r-\frac{v(F)-r}{|F|-1}},~m^{1-\frac{1}{d_r^*(F)(r-\al)}}n^{\frac{\al}{d^*_r(F)(r-\al)}}\r\}.
$$

\end{thm}
\addtocounter{thm}{-1}
\endgroup

\begin{proof}
Since $F$ is $(C,\al)$-Sidorenko, for every $n$-vertex $r$-graph $H$ with $m\ge Cn^{\al}$ edges contains at least $C^{-1}m^{|F|}n^{-r|F|+v(F)}$ copies of $F$. We want to apply \Cref{lemma:JL_key} with
$$f(n,m)=C^{-1}m^{|F|}n^{-r|F|+v(F)}.$$ To this end, we need to check that conditions (a) and (b) in \Cref{lemma:JL_key} are satisfied. Since $|F|\ge 2$, we know that $v(F)\ge r+1$, and hence $m^{\frac{v(F)-\al}{r-\al}}/n^{\frac{\al(v(F)-r)}{r-\al}}$ is maximized when $\al$ is minimized. Thus by $\al> r-\frac{1}{d_r(F)}$ and $m\le n^r$ we have
$$
\frac{f(n,m)n^{\frac{\al(v(F)-r)}{r-\al}}}{m^{\frac{v(F)-\al}{r-\al}}}> C^{-1}m^{|F|-1-d_r(v(F)-r)}n^{-r|F|+v(F)+(d_rr-1)(v(F)-r)}=C^{-1}\l(\frac{n^r}{m}\r)^{d_r(v(F)-r)-|F|+1} \ge C^{-1}.
$$
Also note that
$$
\frac{f(np,mp^r)}{f(n,m)p^{v(F)}}=1.
$$
Thus condition (a) in \Cref{lemma:JL_key} holds with $\delta=C^{-1}$. Further, note that the exponent of $m$ in $f(n,m)$ is $|F|\ge 2$, thus condition (b) holds. By \Cref{lemma:JL_key}, there exists a constant $C_1\ge 1$ such that if $H$ is an $n$-vertex $r$-graph with $m\ge C_1n^{\al}$ edges, then there exists a family $\F$ of copies of $F$ in $H$ such that
\begin{equation}\label{equation:supersaturation}
|\F|\ge f(n,\frac{1}{8}m)= C^{-1}8^{-|F|}m^{|F|}n^{-r|F|+v(F)},
\end{equation}
and, $\forall 1\le i \le |F|-1$,
$$
\Delta_{i}(\F)\le C_1 \l({m}{n^{-\al}}\r)^{-\frac{i-1}{d^*_r(F)(r-\al)}}\frac{|\F|}{|H|}=O\l(\l(\frac{\tau(n,m)}{m}\r)^{i-1}\frac{|\F|}{|H|}\r),
$$
where the last equal sign comes from using the second term in the maximum in $\tau(n,m)$.

Note that by \Cref{equation:supersaturation},
$$
1=\Delta_{|F|}(\F)\le C_18^{|F|}m^{-|F|+1}n^{r|F|-v(F)}\frac{|\F|}{|H|}=O\l(\l(\frac{\tau(n,m)}{m}\r)^{|F|-1}\frac{|\F|}{|H|}\r),
$$
where the last equal sign comes from using the first term in the maximum in $\tau(n,m)$. Therefore, the proof is completed by taking sufficiently large $C'$.
\end{proof}

\subsection{Expansions of Complete Bipartite Graphs}
Here we prove \Cref{thm:Kst} giving tight bounds for random Tur\'an numbers of $K_{s,t}$ expansions for $r$ sufficiently large.  We need the following lemma.
\begin{lem}\label{lemma:tighttree_Kst}
When $r\ge s+1$, $K^{(r)}_{s,t}$ is a spanning subgraph of a tight $r$-tree.
\end{lem}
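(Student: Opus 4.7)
The plan is to invoke \Cref{lem:spanningTreeInduction} with $k=2$, $r_0=s+1$, and $F=K_{s,t}$. Under this reduction it suffices to exhibit a single tight $(s+1)$-tree $T'$ on the vertex set $V(K_{s,t})$ whose $2$-shadow contains $K_{s,t}$ as a spanning subgraph, since the lemma then produces, for every $r\ge s+1$, a tight $r$-tree containing $K_{s,t}^{(r)}$ as a spanning subgraph.

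To build $T'$, I would write $V(K_{s,t})=A\cup B$ with $A=\{a_1,\dots,a_s\}$ and $B=\{b_1,\dots,b_t\}$, and take the edge set of $T'$ to be $\{e_j : 1\le j\le t\}$ where $e_j:=A\cup\{b_j\}$. Each $e_j$ has size exactly $s+1$, so $T'$ is indeed $(s+1)$-uniform. Ordering the edges as $e_1,e_2,\dots,e_t$ makes $T'$ a tight $(s+1)$-tree: for each $j\ge 2$ the vertex $b_j$ lies in $e_j$ but in no earlier edge, and $e_j\setminus\{b_j\}=A\subset e_1$, so the tight-tree condition is met with distinguished predecessor $e_1$.

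Finally, for every pair $\{a_i,b_j\}$ with $a_i\in A$ and $b_j\in B$, the pair is a $2$-subset of $e_j$, so the $2$-shadow of $T'$ contains every edge of $K_{s,t}$; since $V(T')=A\cup B=V(K_{s,t})$, this containment is as a spanning subgraph. Applying \Cref{lem:spanningTreeInduction} then gives the conclusion.

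There is no genuine obstacle here: the construction is explicit and both verifications (tight-tree property and $2$-shadow containment) are immediate from the description. The hypothesis $r\ge s+1$ enters precisely to guarantee that each edge of $T'$ has room to hold all of $A$ together with a single vertex from $B$, which is what lets one edge of $T'$ cover every pair incident to a given $b_j$ in one shot.
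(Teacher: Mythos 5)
Your proof is correct and is essentially identical to the paper's: both reduce via \Cref{lem:spanningTreeInduction} to building a tight $(s+1)$-tree whose $2$-shadow spans $K_{s,t}$, and both take exactly the edges $A\cup\{b_j\}$ for $j=1,\dots,t$. The only difference is that you spell out the tight-tree verification which the paper leaves as ``easy to check.''
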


\begin{proof}
By \Cref{lem:spanningTreeInduction} it suffices to construct a tight $(s+1)$-tree whose 2-shadow contains a copy of $K_{s,t}$. Let $S$ and $T$ be the two parts of $K_{s,t}$ such that $|S|=s$ and $|T|=t$, where $S=\{v_1,\dots,v_s\}$ and $T=\{w_1,\dots,w_t\}$. Pick the $(s+1)$-edges $e_i=S\cup \{w_i\}$ for $1\le i\le t$. It is easy to check that $e_1,\dots,e_t$ form a tight $(s+1)$-tree whose 2-shadow contains all $v_iw_j$, where $1\le i\le s$ and $1\le j\le t$.
\end{proof}

Next we prove balanced supersaturation results for $K^{(r)}_{s,t}$ using The balanced supersaturation results for $K_{s,t}$ proved by Morris and Saxton (\Cref{theorem:MorrisSaxtonSupersat}) together with the Shadow Expansion method (\Cref{lemma:BSviaSHADOW}) and the Greedy Expansion method (\Cref{lemma:BSviaGreedy}).

\begin{lem}\label{lemma:BS_Kst}
Let $t\ge s\ge 2$ and $r\ge s+1$. There exist sufficiently large $C_r$ such that for the functions
$$
M_r(n)=C_rn^{r-1},
$$
$$
\gamma_r(n)=(\log n)^{C_r},
$$
and
$$
\tau_r(n,m)=\max\l\{n^{\frac{2st-s-t}{st-1}},~m^{-\frac{(s-1)^2t}{2(st-1)(r-s)+(s-1)^2t}}n^{\frac{(s+1)((st-1)(r-s)+(s-1)^2t)}{2(st-1)(r-s)+(s-1)^2t}}\r\}(\log n)^{C_r},
$$
$K^{(r)}_{s,t}$ is $(M_r,\gamma_r(n),\tau_r(n,m))$-balanced. 
\end{lem}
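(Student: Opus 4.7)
The plan is to establish the balanced supersaturation through a two-stage lifting procedure: first apply Shadow Expansion (Theorem~\ref{lemma:BSviaSHADOW}) to lift the Morris--Saxton bound for $K_{s,t}$ (Theorem~\ref{theorem:MorrisSaxtonSupersat}) up to a bound for the $s$-graph $K_{s,t}^{(s)}$, and then iteratively apply Greedy Expansion (Theorem~\ref{lemma:BSviaGreedy}) to lift from $K_{s,t}^{(k-1)}$ to $K_{s,t}^{(k)}$ for each $k = s+1, s+2, \ldots, r$.  Lemma~\ref{lemma:tighttree_Kst} supplies the tight-tree hypothesis required by Greedy at every step since $k \ge s+1$.

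For the Shadow step, applying Theorem~\ref{lemma:BSviaSHADOW} with $r_0 = 2$ and target uniformity $s$ yields a bound of the form $\tau_s(n,m) = \tau_2\bigl(n,\, n^{(s-2)/(s-1)} m^{1/(s-1)}(\log n)^{-C}\bigr)(\log n)^{C}$; a routine cancellation shows the $m$-dependent piece simplifies to $m^{-1} n^{s+1}$.  This matches the claimed form of $\tau_r$ specialized to $r = s$, where the exponents reduce to $\alpha_s = 1$ and $\beta_s = s+1$.  (When $s = 2$ this step is vacuous, since the Morris--Saxton bound is already in the claimed form.)

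For the Greedy step at uniformity $k$, I would choose the parameter $A(n,m)$ to balance the two $m$-dependent contributions to the max defining $\tau_k$, namely by setting $A^{-1/d_k}\cdot m = A^{\alpha_{k-1}} m^{-\alpha_{k-1}} n^{\beta_{k-1}}$, where $d_k = d_k(K_{s,t}^{(k)})$ is determined by Lemma~\ref{lem:densityRelation} as $1/d_k = (k-2) + (s+t-2)/(st-1)$.  Substituting the resulting $A$ back into the max produces the recursion
\begin{equation*}
\alpha_k = \frac{\alpha_{k-1}(1 - d_k)}{1 + d_k\,\alpha_{k-1}}, \qquad \beta_k = \frac{\beta_{k-1}}{1 + d_k\,\alpha_{k-1}}.
\end{equation*}
A direct induction on $k$ starting from the base case $\alpha_s = 1,\ \beta_s = s+1$ then verifies that these agree with the claimed closed forms; the key algebraic step reduces to the elementary identity $(s-2)(st-1) + (s+t-2) = (s-1)^2 t$.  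The $n$-only term $n^{(2st-s-t)/(st-1)}$ in the max is carried along unchanged at each iteration.

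The main technical obstacle will be verifying at each iteration that the balancing value of $A$ satisfies property~(d) of Theorem~\ref{lemma:BSviaGreedy}, namely $m n^{1-k} \le A \le m/(M_{k-1}(n)\, C\log n)$, throughout the full range $m \ge C_k n^{k-1}$.  If the balancing $A$ should fall outside this interval, I would clamp it to the nearer boundary and argue that the resulting $\tau_k$ still lies within the claimed bound (since a suboptimal $A$ only enlarges the max by at most a polynomial factor on the boundary of validity).  The remaining bookkeeping --- absorbing the multiplicative $\log n$ factors and increasing the constants $C_k$ across the $O(r)$ iterations into a single final constant $C_r$ --- is routine.
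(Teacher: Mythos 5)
Your overall plan --- Shadow from $K_{s,t}$ to $K_{s,t}^{(s)}$, then iterate Greedy from $K_{s,t}^{(k-1)}$ to $K_{s,t}^{(k)}$ for $k = s+1,\ldots,r$, choosing $A$ to balance the two $m$-dependent terms --- is exactly the paper's strategy. Your derived recursion $\alpha_k = \alpha_{k-1}(1-d_k)/(1+d_k\alpha_{k-1})$, $\beta_k = \beta_{k-1}/(1+d_k\alpha_{k-1})$ is equivalent to the paper's explicit $A(n,m)$ choice and substitution, and your identity $(s-2)(st-1)+(s+t-2)=(s-1)^2t$ is precisely the algebra underlying the closed form.

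However, there is a gap you gloss over that the paper devotes real attention to. The Shadow step does \emph{not} deliver the base case of a clean induction with $M_k(n)=C_kn^{k-1}$: applying \Cref{lemma:BSviaSHADOW} to lift the Morris--Saxton bound gives $M_s(n)=n^{s-1+1/s}(\log n)^{C_s}$, which is polynomially larger than $C_sn^{s-1}$. This is not a cosmetic discrepancy, because property~(d) of \Cref{lemma:BSviaGreedy} at the very first Greedy step $k=s+1$ requires $A(n,m)\le m/(M_s(n)\,C\log n)$ with the \emph{actual} $M_s$ from the Shadow step, and this is strictly harder to satisfy. The paper's resolution is to prove a modified inductive claim in which the hypothesis on $M_{r-1}$ is weakened to $M_{r-1}(n)=\max\{C_{r-1}n^{r-2},\,n^{s-1+1/s}(\log n)^{C_{r-1}}\}$, and then to verify feasibility of the balancing $A$ directly: the inequality reduces to $M_{r-1}(n)\le C'n^{r-2+\frac{st+t+s-3}{2(st-1)(r-s)+(s-1)^2t}}(\log n)^{-1}$, which is trivial for $r\ge s+2$ and at $r=s+1$ amounts to $\tfrac{1}{s}<\tfrac{st+t+s-3}{2(st-1)+(s-1)^2t}$, equivalent to $0<(s-1)(s+t-2)$. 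Your proposed fallback of ``clamping'' $A$ to the boundary of the feasible interval is aimed at the wrong issue --- the trouble is not that the balancing $A$ is infeasible for the claimed $M_{r-1}$, but that the claimed $M_{r-1}$ is not what the base case produces --- and clamping would in any case require a nontrivial argument that the resulting $\tau_k$ is still bounded by the claimed form, which does not come for free. You need to instead carry the larger $M_s$ through the induction as the paper does.
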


\begin{proof}
We will first use \Cref{lemma:BSviaSHADOW} to obtain balanced supersaturation results for $K^{(s)}_{s,t}$, and then apply \Cref{lemma:BSviaGreedy} to deal with $K^{(r)}_{s,t}$ when $r\ge s+1$.

Let $M_2,C_2,\tau_2$ be as in Theorem~\ref{theorem:MorrisSaxtonSupersat} and let $\gam_2(n)$ be constantly $C_2$.  In order to apply \Cref{lemma:BSviaSHADOW} with $F=K_{s,t}$ and $r=s$, we need to check the four properties (a), (b), (c) and (d).  Property (a) of $F$ being $(M_2,\gam_2,\tau_2)$-balanced is immediate from Theorem~\ref{theorem:MorrisSaxtonSupersat}, property (b) of having $M_2(n)n^{-\frac{s-2}{s-1}}=n^{1+\frac{1}{s(s-1)}}$ being non-increasing holds, and property (c) of $\gam_2(n)$ being non-decreasing is immediate.  For property (d), we observe that $\tau_2(n,m)$ is non-increasing with $m$ and that for every $s\ge 2$
$$
\tau_2(nx,mx^{\frac{s-2}{s-1}})=\max\l\{(nx)^{\frac{2st-s-t}{st-1}},~m^{-s+1}n^{2s-1}x^{s+1}\r\}
$$
is increasing with respect to $x$. So property (d) also holds. Hence by \Cref{lemma:BSviaSHADOW}, there exists sufficiently large $C_s$ such that for the functions
$$
M_s(n)=\max\l\{M_2(n)^{s-1}n^{-s+2},~n^{s-1}\r\}(\log n)^{C_s}=n^{s-1+\frac{1}{s}}(\log n)^{C_s},
$$
$$
\gamma_s(n)=(\log n)^{C_s},
$$
and
$$
\tau_s(n,m)=\tau_2(n,n^{\frac{s-2}{s-1}}m^{\frac{1}{s-1}})(\log n)^{C_s}=\max\l\{n^{\frac{2st-s-t}{st-1}},~m^{-{1}}n^{s+1}\r\}(\log n)^{C_s},
$$
$K^{(s)}_{s,t}$ is $(M_s,\gamma_s,\tau_s)$-balanced.

Note that this balanced supersaturation result for $K^{(s)}_{s,t}$ is almost the same as Lemma~\ref{lemma:BS_Kst} when $r=s$; the only difference is that $M_s(n)$ is slightly larger than expected. This forbids us from using induction on $r$ with $r=s$ as the base case. To cope with this obstacle, we will prove an inductive claim with a slightly weaker requirement on $M_{r-1}$. Precisely, it suffices to prove the following claim.
\begin{claim}
Let $t\ge s\ge 2$ and $r\ge s+1$. If there exist $C_{r-1}$ such that for the functions
$$
M_{r-1}(n)=\max\{C_{r-1}n^{r-2},n^{s-1+\frac{1}{s}}(\log n)^{C_{r-1}}\},
$$
$$
\gamma_{r-1}(n)=(\log n)^{C_{r-1}},
$$
and
$$
\tau_{r-1}(n,m)=\max\l\{n^{\frac{2st-s-t}{st-1}},~m^{-\frac{(s-1)^2t}{2(st-1)(r-1-s)+(s-1)^2t}}n^{\frac{(s+1)((st-1)(r-1-s)+(s-1)^2t)}{2(st-1)(r-1-s)+(s-1)^2t}}\r\}(\log n)^{C_{r-1}},
$$
$K^{(r-1)}_{s,t}$ is $(M_{r-1},\gamma_{r-1}(n),\tau_{r-1}(n,m))$-balanced, then there exist $C_r$ such that for the functions
$$
M_r(n)=C_rn^{r-1},
$$
$$
\gamma_r(n)=(\log n)^{C_r},
$$
and
$$
\tau_r(n,m)=\max\l\{n^{\frac{2st-s-t}{st-1}},~m^{-\frac{(s-1)^2t}{2(st-1)(r-s)+(s-1)^2t}}n^{\frac{(s+1)((st-1)(r-s)+(s-1)^2t)}{2(st-1)(r-s)+(s-1)^2t}}\r\}(\log n)^{C_r},
$$
$K^{(r)}_{s,t}$ is $(M_r,\gamma_r(n),\tau_r(n,m))$-balanced. 
\end{claim}

\begin{proof}
It is easy to check that $\gamma_{r-1}$ and $\tau_{r-1}$ satisfy properties (b) and (c) in Theorem~\ref{lemma:BSviaGreedy}. Thus by Lemma~\ref{lemma:tighttree_Kst}, we can apply Theorem~\ref{lemma:BSviaGreedy} with $F=K^{(r-1)}_{s,t}$, and we let $C_r$ be the constant given by this theorem.  We let 
$$
A(n,m)=m^{\frac{2(st-1)}{2(st-1)(r-s)+(s-1)^2t}}n^{-\frac{(s+1)(st-1)}{2(st-1)(r-s)+(s-1)^2t}}.
$$

We need to check property (d) in Theorem~\ref{lemma:BSviaGreedy}, that is, for all sufficiently large $n$ and $m\ge M_r(n)$,
$$
mn^{1-r}\le A(n,m)\le \frac{m}{M_{r-1}(n)\cdot C_r\log n}.
$$

The first inequality is equivalent to 

$$
m^{1-\frac{2(st-1)}{2(st-1)(r-s)+(s-1)^2t}}\le n^{r-1-\frac{(s+1)(st-1)}{2(st-1)(r-s)+(s-1)^2t}}.
$$
Since $m\le n^r$, it suffices to have $1\le \frac{(2r-s-1)(st-1)}{2(st-1)(r-s)+(s-1)^2t}=\frac{2(st-1)(r-s)+(s-1)(st-1)}{2(st-1)(r-s)+(s-1)^2t}$, which is equivalent to $t\ge 1$.

We still need to check
$$
A(n,m)\le \frac{m}{M_{r-1}(n)\cdot C_r\log n},
$$
which is equivalent to
$$
M_{r-1}(n)\le \frac{m^{1-\frac{2(st-1)}{2(st-1)(r-s)+(s-1)^2t}}n^{\frac{(s+1)(st-1)}{2(st-1)(r-s)+(s-1)^2t}}}{C_r\log n}.
$$
Note that $m\ge C_rn^{r-1}$, so it suffices to check the inequality above when $m=C_rn^{r-1}$, which for some appropriate $C'$ is
$$
M_{r-1}(n)\le {C'n^{r-2+\frac{st+t+s-3}{2(st-1)(r-s)+(s-1)^2t}}}(\log n)^{-1}.
$$
Clearly, this holds when $r\ge s+2$. When $r=s+1$, we need
$$
\frac{1}{s}<\frac{st+t+s-3}{2(st-1)+(s-1)^2t},
$$
which is equivalent to
$$
0<(s-1)(s+t-2),
$$
which clearly holds.

Note that when omitting logarithmic factors, we have
\begin{equation}\label{equation:BS_Kst_2}
\begin{aligned}
\tau_{r-1}\l(n,\frac{m}{A(n,m)}\r)&=\max\l\{n^{\frac{2st-s-t}{st-1}},\l(\frac{m}{A(n,m)}\r)^{-\frac{(s-1)^2t}{2(st-1)(r-1-s)+(s-1)^2t}}n^{\frac{(s+1)((st-1)(r-1-s)+(s-1)^2t)}{2(st-1)(r-1-s)+(s-1)^2t}}\r\}\\
&=\max\l\{n^{\frac{2st-s-t}{st-1}},m^xn^y\r\},
\end{aligned}
\end{equation}
where
$$
\begin{aligned}
x&=-\frac{(s-1)^2t}{2(st-1)(r-1-s)+(s-1)^2t}\cdot\l(1-\frac{2(st-1)}{2(st-1)(r-s)+(s-1)^2t}\r)\\
&=-\frac{(s-1)^2t}{2(st-1)(r-s)+(s-1)^2t}    
\end{aligned}
$$
and
$$
\begin{aligned}
y&=-\frac{(s-1)^2t}{2(st-1)(r-1-s)+(s-1)^2t}\cdot \frac{(s+1)(st-1)}{2(st-1)(r-s)+(s-1)^2t}+\frac{(s+1)((st-1)(r-1-s)+(s-1)^2t)}{2(st-1)(r-1-s)+(s-1)^2t}\\
&=\frac{s+1}{2(st-1)(r-1-s)+(s-1)^2t}\cdot \frac{(2(st-1)(r-1-s)+(s-1)^2t)((st-1)(r-s)+(s-1)^2t)}{2(st-1)(r-s)+(s-1)^2t}\\
&=\frac{(s+1)((st-1)(r-s)+(s-1)^2t)}{2(st-1)(r-s)+(s-1)^2t)}.
\end{aligned}
$$

Also note that
\begin{equation}\label{equation:BS_Kst_1}
\begin{aligned}
A(n,m)^{-\frac{1}{d_r\l(K^{(r)}_{s,t}\r)}}&=A(n,m)^{-r+s-\frac{(s-1)^2t}{st-1}}\\
&=m^{-\frac{2(st-1)(r-s)+2(s-1)^2t}{2(st-1)(r-s)+(s-1)^2t}}n^{\frac{(s+1)((st-1)(r-s)+(s-1)^2t)}{2(st-1)(r-s)+(s-1)^2t}}\\
&=m^{x}n^y.
\end{aligned}    
\end{equation}

Recall that
$$
M_r(n)=C_rn^{r-1},
$$
$$
\gamma_r(n)=(\log n)^{C_r}\ge C_r\gamma_{r-1}(n)\log n,
$$
and
$$
\begin{aligned}
\tau_r(n,m)&=\max\l\{n^{\frac{2st-s-t}{st-1}},~m^{-\frac{(s-1)^2t}{2(st-1)(r-s)+(s-1)^2t}}n^{\frac{(s+1)((st-1)(r-s)+(s-1)^2t)}{2(st-1)(r-s)+(s-1)^2t}}\r\}(\log n)^{C_{r}}\\
&\ge \max\l\{A(n,m)^{-\frac{1}{d_r\l(K^{(r)}_{s,t}\r)}},\tau_{r-1}\l(n,\frac{m}{C_rA(n,m)\log n}\r)C_r\log n\r\}.
\end{aligned}
$$
The last inequality above comes from~(\ref{equation:BS_Kst_1}) and (\ref{equation:BS_Kst_2}). Thus by Theorem~\ref{lemma:BSviaGreedy} $K^{(r)}_{s,t}$ is $(M_r,\gamma_r,\tau_r)$-balanced.
\end{proof} 
\end{proof}

We use Lemma~\ref{lemma:BS_Kst} together with \Cref{Lemma:General Random Turan} to prove Theorem~\ref{thm:Kst}.

\begin{proof}[Proof of Theorem~\ref{thm:Kst}]
By \Cref{lem:generalLowerRandomTuran} it suffices to only prove the upper bound of Theorem~\ref{thm:Kst} for the larger range of $p$. By Lemma~\ref{lemma:BS_Kst}, we can apply Lemma~\ref{Lemma:General Random Turan} with $F=K^{(r)}_{s,t}$ which guarantees the existence of a sufficiently large constant $C_1$ such that for all $n\ge N_r$ and $m\ge C_rn^{r-1}$,
\begin{equation}\label{equation:1_Kst_greedy}
\ex(K^{(r)}_{s,t}, G^r_{n,p})\le \max\l\{{C_1}pm,n^{\frac{2st-s-t}{st-1}}(\log n)^{C_1},~m^{-\frac{(s-1)^2t}{2(st-1)(r-s)+(s-1)^2t}}n^{\frac{(s+1)((st-1)(r-s)+(s-1)^2t)}{2(st-1)(r-s)+(s-1)^2t}}(\log n)^{C_1}, \r\}.   
\end{equation}

Let $M_1$ be such that 
\begin{equation}\label{equation:2_Kst_greedy}
{C_1}pM_1=M_1^{-\frac{(s-1)^2t}{2(st-1)(r-s)+(s-1)^2t}}n^{\frac{(s+1)((st-1)(r-s)+(s-1)^2t)}{2(st-1)(r-s)+(s-1)^2t}}(\log n)^{C_1}.
\end{equation}

One can check that for some constant $C_2$
\begin{equation}\label{equation:3_Kst_greedy}
M_1\le p^{-\frac{2(st-1)(r-s)+(s-1)^2t}{2(st-1)(r-s)+2(s-1)^2t}}n^{\frac{s+1}{2}}(\log n)^{C_2}
\end{equation}

If $M_1\ge C_rn^{r-1}$, then we can apply (\ref{equation:1_Kst_greedy}), which by (\ref{equation:2_Kst_greedy}) and (\ref{equation:3_Kst_greedy}) gives 
\begin{equation}\label{equation:4_Kst_greedy}
\begin{aligned}
\ex(K^{(r)}_{s,t}, G^r_{n,p})&\le \max\l\{n^{\frac{2st-s-t}{st-1}}(\log n)^{C_1},~C_1pM_1\r\}\\
&\le \max\l\{n^{\frac{2st-s-t}{st-1}}(\log n)^{C_1},~p^{\frac{(s-1)^2t}{2(st-1)(r-s)+2(s-1)^2t}}n^{\frac{s+1}{2}}(\log n)^{C_2}\r\}.
\end{aligned}    
\end{equation}

If $M_1\le C_rn^{r-1}$, let $m=C_rn^{r-1}$. Since $m\ge M_1$, by (\ref{equation:2_Kst_greedy}) we have
$$
m^{-\frac{(s-1)^2t}{2(st-1)(r-s)+(s-1)^2t}}n^{\frac{(s+1)((st-1)(r-s)+(s-1)^2t)}{2(st-1)(r-s)+(s-1)^2t}}(\log n)^{C_1}\le{C_1}pm.
$$
Thus by (\ref{equation:1_Kst_greedy}),
\begin{equation}\label{equation:5_Kst_greedy}
\begin{aligned}
\ex(K^{(r)}_{s,t}, G^r_{n,p})&\le \max\l\{n^{\frac{2st-s-t}{st-1}}(\log n)^{C_1},~C_1pm\r\}\\
&=\max\l\{n^{\frac{2st-s-t}{st-1}}(\log n)^{C_1},~C_1C_rpn^{r-1}\r\}.
\end{aligned}    
\end{equation}

To summarize, by (\ref{equation:4_Kst_greedy}) and (\ref{equation:5_Kst_greedy}), for some sufficiently large constant $C_3$ we have 
$$
\ex(K^{(r)}_{s,t}, G^r_{n,p})\le \max\l\{n^{\frac{2st-s-t}{st-1}}(\log n)^{C_3},~p^{\frac{(s-1)^2t}{2(st-1)(r-s)+2(s-1)^2t}}n^{\frac{s+1}{2}}(\log n)^{C_3},~C_3pn^{r-1}\r\}.
$$
From ease of analysis we ignore the logarithmic terms in these expressions.  Let $\alpha=\alpha(r,s,t)>0$ be the unique value such that at $p=n^{-\alpha}$ the first and third terms equal each other and such that for smaller $p$ the first term is larger than the third, i.e.\ $\alpha=\frac{2st-s-t}{st-1}+1-r$.  Similarly there exists some $\beta=\beta(r,s,t)>0$ such that at $p=n^{-\alpha}$ the last two terms are equal and such that for all smaller $p$ the second term is larger (this exists since the exponent for $p$ and $n$ in the third term is larger for $r\ge s+1$ and $s\ge 3$).  

Observe that the second term will never achieve the maximum if $\beta\ge \alpha$, since in this case the second term will only beat the third term for $p=n^{-\gamma}$ with $\gamma\ge \beta$ but this value will be smaller than the second term at $p=n^{-\alpha}$ which is smaller than the third term which is smaller than the first term.  A laborious computation shows that $\beta\ge \alpha$ holds precisely if
\[r\ge s+1+\frac{s-1}{t-1}-\frac{s+t-2}{st-1}.\]
This inequality holds for all $r\ge s+2$ since the fractional part is at most 1 due to $s\le t$.  At $r=s+1$ it holds provided
$\frac{s-1}{t-1}-\frac{s+t-2}{st-1}\le 0$, and in fact this holds for the even larger quantity $\frac{s-1}{t-1}-\frac{s+t-2}{st-1}+\frac{1}{(t-1)(st-1)}=\frac{t(s^2-2s+3-t)}{(t-1)(st-1)}$ provided $t\ge s^2-2s+3$ as desired.
\end{proof}

\end{document}